\newtheorem{defn}{Definition}[section]
\newtheorem{lem}[defn]{Lemma}
\newtheorem{prop}[defn]{Proposition}
\newtheorem{thm}[defn]{Theorem}
\newtheorem{cor}[defn]{Corollary}
\newtheorem{rem}[defn]{Remark}
\newtheorem{ass}[defn]{Assumption}
\newtheorem{note}[defn]{Notation}
\newtheorem{ex}[defn]{Example}
\newtheorem*{claim}{Claim}
\newtheorem{step}{Step}
\newcommand{\A}{\mathcal A}
\newcommand{\B}{\mathcal B}
\newcommand{\G}{\mathcal G}
\newcommand{\su}{\mathfrak{su}(2)}
\def\ri{\rightarrow}
\def\Om{\Omega}
\newcommand{\n}{\mathbb N}
\newcommand{\R}{\mathbb R}
\newcommand{\z}{\mathbb Z}
\newcommand{\F}{\mathcal{F}}
\def\inner<#1>{\langle #1 \rangle}
\def\stab{\text{Stab}}
\def\coker{\text{Coker}}
\newcommand{\xri}{\xrightarrow}
\newcommand{\fr}{\mathfrak}
\def\pr{\text{pr}}
\def\ker{\text{Ker}}
\def\im{\text{Im}}
\def\hol{\text{Hol}}
\def\aut{\text{Aut}}
\def\map{\text{Map}}
\def\ome{\Omega}
\def\ad{\text{ad }}
\def\grad{\text{grad}}
\def\ind{\text{ind}}
\def\hom{\text{Hom}}
\def\End{\text{End}}
\def\ind{\text{ind}}
\begin{document}

\title{Instantons for 4-manifolds with periodic ends and an obstruction to embeddings of 3-manifolds}

\author{Masaki Taniguchi}

\maketitle
\begin{abstract}
We construct an obstruction to the existence of embeddings of a homology $3$-sphere into a homology $S^3\times S^1$ under some cohomological condition. The obstruction are defined as an element in the filtered version of the instanton Floer cohomology due to \cite{FS92}. We make use of the $\z$-fold covering space of homology $S^3\times S^1$ and the instantons on it.
\end{abstract}
\tableofcontents

\section{Introduction}
 There are two typical studies of gauge theory for 4-manifolds with periodic end by C.H.Taubes \cite{T87} and  J.Lin \cite{L16}. They gave a sufficient condition to exist a natural compactification of the instanton and the Seiberg-Witten moduli spaces for such non-compact 4-manifolds. The condition of Taubes is the non-existence of non-trivial $SU(2)$ flat connection of some segment of the end. The condition of Lin is the existence of a positive scalar curvature metric on the segment of the end. As an application of the existence of the compactification, Taubes showed the existence of uncountable family of exotic $\R^4$ and Lin constructed an obstruction to the existence of a positive scalar curvature metric.

 In this paper, we also give a similar sufficient condition for the instanton moduli spaces. The condition is an uniform bound on the $L^2$-norm of curvature of instantons. When we bound the $L^2$-norm of curvature, we use an  invariant which is a generalization of the Chern-Simons functional. Under this condition, we prove a compactness theorem (Theorem \ref{cptness}).

 As the main theorem of this paper, we construct an obstruction of the existence of embeddings of a homology $3$-sphere into a homology $S^3\times S^1$ with some cohomological condition (Theorem \ref{mainthm}). 
 %When we construct the obstruction, we use the compactness theorem. 
To formulate the obstruction, we need a variant of the instanton Floer cohomology. The variant is the filtered instanton Floer cohomology $HF^i_r$ whose filtration was essentially considered by R.Fintushel-R.Stern in \cite{FS92}. The obstruction is an element of the filtered instanton cohomology. We denote the element by $[\theta^r]$. The element $[\theta^r]\in HF^1_r$ is a filtered version of $[\theta]$ which was already defined by S.K.Donaldson \cite{Do02} and K.Fr\o yshov \cite{Fr02}. The class $[\theta]$ is defined by counting the gradient lines of the Chern-Simons functional which converge to the trivial flat connection. In order to show $[\theta^r]$ is actually an obstruction of embeddings, we count the number of the end of the $1$-dimensional instanton moduli space for 4-manifold which has both of cylindrical and periodic end. For the couting, we use the compactness theorem (Theorem \ref{cptness}).

This paper is organized as follows. In Section \ref{main}, we give a precise formulation of our main theorem (Theorem \ref{mainthm}). In Section \ref{moduli theory}, we prepare several notations and constructions which are used in the rest of this paper.
In particular, we introduce the filtered instanton Floer homology $HF^r_i$ and the obstruction class $[\theta^r]$. We also review Fredholm, moduli theory for 4-manifolds with periodic end. In Section \ref{excs}, we generalize the Chern-Simons functional and introduce the invariants $Q^{i}_X$. In Section 5, we prove the compactness theorem(Theorem \ref{cptness}). We use $Q^{i}_X$ to control the $L^2$-norm of curvature. In Section 6, we deal with technical arguments about the transversality and the orientation for the instanton moduli spaces for 4-manifolds with periodic end. In Section 7, we prove Theorem \ref{mainthm}.

 {\bf Acknowledgements}:
The author is grateful to Mikio Furuta for his suggestions. The author would like to thank Hokuto Konno for useful conversations.

\section{Main theorem}\label{main}
Let $X$ be a homology $S^3\times S^1$, i.e.\ , $X$ is a closed 4-manifold equipped with an isomorphism $\phi:H_*(X,\z)\ri H_*(S^3\times S^1,\z)$ in this paper. Then $X$ has an orientation induced by the standard orientation of $S^3\times S^1$ and $\phi$.  Let $Y$ be an oriented homology $S^3$. We construct an obstruction of embeddings $f$ of $Y$ into $X$ satisfying  $f_*[Y]=1\in H_3(X,\z)$ as an element in the filtered instanton Floer cohomology. We use information of the compactness of the instanton moduli spaces for periodic-end 4-manifold in a crucial step of our construction.  In order to formulate our main theorem, we need to prepare several notations.

For any manifold $Z$, we denote by $P_Z$ the product $SU(2)$ bundle. The product connection on $P_Z$ is written by $\theta$.
\[
\A(Z):= \{SU(2)\text{-connections on $P_Z$}\},
\]
\[
\A^{\text{flat}}(Z):=\{SU(2)\text{-flat connections on $P_Z$}\}\subset \A(Y),
\]
\[
\widetilde{\B}(Z):= \A(Z) /\map_0(Z,SU(2)),
\]
\[
\widetilde{R}(Z):=\A^{\text{flat}} /\map_0(Z,SU(2))\subset \widetilde{\B}(Z),
\]
and
\[
R(Z):= \A^{\text{flat}}(Z)/\map(Z,SU(2)),
\]
where $\map_0(Z,SU(2))$ is a set of smooth functions with mapping degree $0$. When $Z$ is equal to $Y$, the Chern-Simons functional $cs_Y:\A(Y)\ri \R$ is defined by 
\[
cs_Y(a):=\frac{1}{8\pi^2}\int_Y Tr(a\wedge da +\frac{2}{3}a\wedge a\wedge a).
\]
It is known that $cs_Y$ decends to a map $\widetilde{\B}(Y)\ri \R$, which we denote by the same notation $cs_Y$.
\begin{note}\label{defl}
We denote the number of elements in $R(Y)$ by $l_Y$. If $R(Y)$ is not a finite set, we set $l_Y=\infty$.
\end{note}
We will use the following assumption on $Y$ in our main theorem(Theorem \ref{mainthm}).
\begin{ass}\label{imp}
All $SU(2)$ flat connections on $Y$ are non-degenerate, i.e. the first cohomology group of the next twisted de Rham complex:
\[
0 \ri \Om^0(Y)\otimes \su \xri{d_a} \Om^1(Y)\otimes \su \xri{d_a} \Om^2(Y)\otimes \su \xri{d_a}\Om^3\otimes \su \ri 0
\]
vanishes for $[a] \in R(Y)$.
\end{ass}
\begin{ex}
All flat connections on the Brieskorn homology $3$-sphere $\Sigma(p,q,r)$ are non-degenerate.(\cite{FS90})
\end{ex}
Under Assumption \ref{imp}, $l_Y$ is finite (\cite{T90}).

In this paper without the use of Assumption \ref{imp}, we will introduce the following invariants:
\begin{itemize}
\item $HF^i_r(Y)$ for $Y$ and $r \in (\R \setminus cs_Y(\widetilde{R}(Y)))\cup \{\infty\}$ in Definition \ref{defofHFr*}  satisfying $HF^i_\infty(Y)=HF^i(Y)$,
\item $[\theta^r] \in HF^1_r(Y)$ for $Y$ and $r \in (\R \setminus cs_Y(\widetilde{R}(Y)))\cup \{\infty\}$ in Definition \ref{defiofthetar} satisfying $[\theta^\infty]=[\theta] \in HF^1(Y)$, and
\item $Q^i_X \in \R_{\geq 0} \cup \{\infty\} $ for $i \in \n$ and $X$ in Definition \ref{defofQiX} (When $X$ is homotopy equivalent to $S^3\times S^1$, $Q^i_X=\infty$ for all $i \in \n$).

\end{itemize}
%We use the following strong assumption on $Y$.
%Then we can consider the ASD moduli spaces on $W$ and apply the instanton Floer theory.
Our main theorem is:
\begin{thm}\label{mainthm}Under Assumption $\ref{imp}$, if there exists an embedding $f$ of $Y$ into $X$ with $f_*[Y]=1\in H_3(X,\z)$  then $[\theta^r]$ vanishes for any $r\in [0,\min \{Q^{2l_Y+3}_X, 1\}] \cap (\R \setminus cs_Y(\widetilde{R}(Y))\cup \{\infty\})$ 
\end{thm}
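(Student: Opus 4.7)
The plan is to turn the hypothetical embedding into a $4$-manifold with one cylindrical and one periodic end, and then read off $[\theta^r]=0$ by counting the ends of a $1$-dimensional instanton moduli space on it. First I would cut $X$ along $f(Y)$ to obtain a cobordism $W$ with $\partial W=Y\sqcup(-Y)$; the condition $f_*[Y]=1\in H_3(X,\z)$ makes $W$ an integral homology cobordism from $Y$ to itself and identifies the $\z$-cover of $X$ with the infinite stack $\cdots\cup W\cup W\cup\cdots$. I would then form $\bar X^+$ by taking the half-infinite stack $W_1\cup W_2\cup\cdots$ and attaching a cylindrical end $(-\infty,0]\times Y$ to the remaining $-Y$ boundary, so that $\bar X^+$ has exactly one cylindrical end and one periodic end. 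Because $X$ is a homology $S^3\times S^1$, the $\z$-cover is a homology $S^3\times\R$, the product connection $\theta$ extends canonically across $\bar X^+$, and the Fredholm and index theory of Section \ref{moduli theory} applies to this mixed-end configuration.

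For each non-trivial irreducible $[\alpha]\in\widetilde R(Y)$ with $cs_Y(\alpha)\in[0,r]$ and Floer grading equal to the expected value making $M(\bar X^+,\alpha)$ zero-dimensional, let $M(\bar X^+,\alpha)$ denote the moduli space of finite-energy instantons on $\bar X^+$ asymptotic to $\alpha$ along the cylindrical end. I would define a $0$-cochain $c\in CF^0_r(Y)$ by $c(\alpha)=\#M(\bar X^+,\alpha)^0$, and aim to prove the identity $\delta c=\mu^r$ in $CF^1_r(Y)$, where $\mu^r$ is the cocycle representative of $[\theta^r]$ from Definition \ref{defiofthetar}. Such an identity would immediately give $[\theta^r]=0\in HF^1_r(Y)$.

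To prove this identity, I would fix $\alpha'$ of Floer grading one higher with $cs_Y(\alpha')\in[0,r]$ and analyze the ends of the $1$-dimensional moduli $M(\bar X^+,\alpha')^1$. After the usual cylindrical-end compactification combined with Theorem \ref{cptness} on the periodic side, the boundary splits into three kinds of limit configurations: (i) broken trajectories $\alpha'\to\beta$ on $\R\times Y$ with $\beta$ irreducible, glued to an element of $M(\bar X^+,\beta)^0$, which together count $\delta c(\alpha')$; (ii) broken trajectories $\alpha'\to\theta$ glued to the unique trivial extension of $\theta$ to $\bar X^+$, which count $\mu^r(\alpha')$ exactly as in the Donaldson--Fr\o yshov derivation of the unfiltered $[\theta]$; and (iii) configurations in which energy escapes into the periodic end, either through Uhlenbeck bubbling deep inside the stack or through convergence to a nontrivial flat connection on some intermediate cross-section. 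With the transversality and orientation conventions of Section 6 in place, the total boundary count reads $\delta c(\alpha')+\mu^r(\alpha')+(\text{type (iii)})=0$, so everything reduces to showing that type (iii) is empty.

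The main obstacle is precisely the exclusion of type (iii), and this is exactly what the hypothesis $r\le\min\{Q^{2l_Y+3}_X,1\}$ is engineered for. The invariants $Q^i_X$ from Section \ref{excs} are constructed so that any instanton configuration whose energy escapes at least $i$ steps into the periodic end has $L^2$-norm of curvature bounded below by $8\pi^2 Q^i_X$. Combined with the fact that along any broken configuration the total energy is $8\pi^2$ times the telescoping difference of $cs_Y$-values, the inequality $r\le Q^{2l_Y+3}_X$ makes the available energy window too small to admit such escape, while the bound $r\le 1$ together with the finiteness of $\widetilde R(Y)$ under Assumption \ref{imp} controls the combinatorial length of broken-trajectory chains that can contribute to $\bar X^+$, which is where the specific constant $2l_Y+3$ comes from. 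Turning these heuristics into a uniform a priori energy bound on $M(\bar X^+,\alpha')^1$ strong enough to invoke Theorem \ref{cptness} on the mixed-end manifold $\bar X^+$ (rather than on a purely periodic-end manifold) is the technical heart of the argument; once it is in place, counting ends forces $\delta c=\mu^r$ and hence $[\theta^r]=0$.
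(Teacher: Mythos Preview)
Your proposal is correct and follows essentially the same route as the paper: cut $X$ along $f(Y)$ to build the mixed-end manifold (the paper's $W$), define a $0$-cochain $n$ by counting the zero-dimensional moduli $M^W(b)$, and count the ends of the one-dimensional $M^W(a)$ for $\ind(a)=1$ using Theorem~\ref{cptness} on the periodic side and standard cylindrical-end theory on the other, obtaining $\delta^r(n)+\theta^r=0$. Your type~(iii) is indeed empty, and the paper absorbs this entirely into Theorem~\ref{cptness} rather than treating it as a separate boundary stratum.

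One small correction to your heuristics: the constant $2l_Y+3$ does \emph{not} come from bounding the length of broken-trajectory chains on the cylindrical end. It arises inside the proof of the compactness theorem (Lemma~\ref{lem:theta}), where on a low-curvature window of the periodic end one approximates by a flat connection $\gamma$ and then runs a pigeonhole argument on $l_Y+1$ consecutive cross-sections on each side of the window to find two gauge-equivalent restrictions of $\gamma$; this produces a flat connection on a cover of $X$ of degree at most $2l_Y+3$, and the bound $r<Q^{2l_Y+3}_X$ then forces $\gamma\cong\theta$ via Lemma~\ref{lem:cs}. The bound $r\le 1$ serves only to exclude bubbling.
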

In particular, if there exists an element 
\[
r\in [0,\min\{Q^{2l_Y+3}_X,1\}] \cap (\R \setminus cs_Y(\widetilde{R}(Y))\cup \{\infty\})
\]
satisfying $0\neq [\theta^r]$, Theorem \ref{mainthm} implies that there is no embedding from $Y$ to $X$ with $f_*[Y]=1 \in H_3(X,\z)$.
\begin{ex}\label{exbri}
Let $X$ be a closed $4$-manifold which is homotopy equivalent to $S^3\times S^1$.
There is no embedding $f$ of $\Sigma(2,3,6k-1)$ into $X$ satisfying $f_*[\Sigma(2,3,6k-1)] =1\in H_3(X,\z)$ for a positive integer $k$ satisfying $1\leq k \leq 12$.
 \end{ex}
The proof of Example \ref{exbri} is given in the end of Subsection \ref{filter}.

\section{Preliminaries}\label{moduli theory}
In this section, we review the (filtered) instanton Floer theory and moduli theory on the periodic end 4-manifolds.\subsection{Holonomy perturbation(1)}\label{hol}
In this subsection we review classes of perturbations which were considered in \cite{Fl88}, \cite{BD95} to define the instanton Floer homology.

Let $Y$ and $P_Y$ be as in Section \ref{main}. We fix a Riemannian metric $g_Y$ on $Y$. We define the set of embeddings from solid tori to $Y$ by
\[
\F_d:= \left\{ (f_i)_{1\leq i \leq d} :S^1\times D^2\ri Y \middle | f_i: \text{orientation preserving embedding} \ \right\}.
\]
Fix a two form $d\mathcal{S}$ on $D^2$ supported in the interior of $D^2$ with $\int_{D^2}d\mathcal{S}=1$. We denote by $C^l(SU(2)^d,\R)_{\ad}$ the adjoint invariant $C^l$-class functions from $SU(2)^d$ to $\R$ and define 
\[
\prod(Y):= \bigcup_{d \in \n}\F_d\times C^l(SU(2)^d,\R)_{\ad}.
\]

We use the following notation,
\[
\widetilde{\B}^*(Y):=\left\{[a]\in \widetilde{\B}(Y)\middle | \text{$a$ is an irreducible connection} \right\} ,
\]
where $\widetilde{\B}(Y)$ is defined in Section \ref{main}.
For $\pi = (f,h)\in \prod(Y)$, the perturbed Chern-Simons functional $cs_{Y,\pi}:\widetilde{\B}^*(Y) \ri \R$  is defined by
\[
cs_{Y,\pi}(a)= cs_Y(a)+ \int_{x \in D^2} h(\hol(a)_{f_1(-,x)},\dots ,\hol(a)_{f_d(-,x)})d\mathcal{S},
\]
where $\hol(a)_{f_i(-,x)}$ is the holonomy around the loop $t \mapsto f_i(t,x)$ for each $i \in \{1,\dots,d\}$. If we identify $\su$ with its dual by the Killing form, the derivative of $h_i=pr_i^*h$ is a Lie algebra valued 1-form over $SU(2)$ for $h \in C^l(SU(2)^d,\R)_{\ad}$. Using the value of holonomy for the loops $\{f_i(x,t)| t\in S^1\}$, we obtain a section $\hol_{f_i(t,x)}(a)$ of the bundle \aut $P$ over $\im f_i$. Sending the section $\hol_{f_i(t,x)}(a)$ by the bundle map induced by $h_i':\aut P\ri \ad P$, we obtain a section $h_i'(\hol_{f_i(t,x)}(a))$ of $\ad P$ over $\im f_i$. 
We now describe the gradient-line equation of $cs_{Y,\pi}$ with respect to $L^2$ metric :
\begin{align}\label{grad}
 \frac{\partial}{\partial t} a_t=\grad_a\ cs_{Y,\pi} = *_{g_Y}(F(a_t)+\sum_{1\leq i \leq d} h'_i(\hol(a_t)_{f_i(t,x)})(f_i)_*{\pr}_2^*d\mathcal{S}),
\end{align}
where $\pr_2$ is the second projection $\pr_2:S^1\times D^2 \ri D^2$ and $*_{g_Y}$ is the Hodge star operator. We denote $\pr_2^*d\mathcal{S}$ by $\eta$.
We set
\[
\widetilde{R}(Y)_\pi:= \left\{a \in \widetilde{\B}(Y) \middle |F(a)+\sum_{1\leq i \leq d} h'_i(\hol(a)_{f_i(t,x)})(f_i)_*\eta=0 \right\},
 \]
 and 
 \[
 \widetilde{R}^*(Y)_\pi:= \widetilde{R}(Y)_\pi \cap \widetilde{\B}^*(Y).
 \]
The solutions of \eqref{grad} correspond to connections $A$ over $Y\times \R$ which satisfy an equation:
\begin{align}\label{pASD}
F^+(A)+ \pi(A)^+=0,
\end{align}
where 
\begin{itemize}
\item The two form $\pi(A)$ is given by 
\[
\sum_{1\leq i \leq d} h'_i(\hol(A)_{\tilde{f}_i(t,x,s)}){(\tilde{f}_i)}_* (\pr_1^* \eta).
\]
\item The map $\pr_1$ is a projection map from $(S^1\times D^2) \times \R$ to $S^1\times D^2$.
\item The notation $+$ is the self-dual component with respect to the product metric on $Y\times \R$.
\item The map $\tilde{f}_i: S^1\times D^2\times \R \ri Y\times \R$ is $f_i\times id$. 
\end{itemize}
We also use several classes of the perturbations.
\begin{defn}\label{flatpres}\upshape
A class of perturbation $\prod(Y)^{\text{flat}}$ is defined by a subset of $\prod(Y)$ with the conditions:
\begin{itemize}
\item $cs_Y $ coincides  with $cs_{Y,\pi}$ on a small neighborhood of critical points of $cs_Y$ 
\item $\widetilde{R}(Y)=\widetilde{R}(Y)_\pi$,
 \end{itemize}
for all element in $\prod(Y)^{\text{flat}}$.
\end{defn}
If the cohomology groups defined by the complex (12) in \cite{SaWe08} satisfies $H^i_{\pi,a}=0$ for all $[a] \in \widetilde{R}(Y)_\pi \setminus \{ [\theta]\}$ for $\pi$, we call $\pi$ {\it non-degenerate perturbation}. 
If $\pi$ satisfies the following conditions, we call $\pi$ {\it regular perturbation}. 
\begin{itemize}
\item The linearization of \eqref{pASD}
\[
d^+_A+d\pi^+_A : \Om^1(Y\times \R)\otimes \su_{L^2_q}\ri \Om^+(Y\times \R)\otimes \su_{L^2_q}
\]
 is surjective for $[A] \in M(a,b)_\pi$ and  all irreducible critical point $a,b$ of $cs_{Y,\pi}$.
 \item The linearization of \eqref{pASD}
 \[
d^+_A+d\pi^+_A : \Om^1(Y\times \R)\otimes \su_{L^2_{q,\delta}}\ri \Om^+(Y\times \R)\otimes \su_{L^2_{q,\delta}}
\]
is surjective for $[A] \in M(a,\theta)_\pi$ and  all irreducible critical point $a$ of $cs_{Y,\pi}$.
 \end{itemize}
 Here the spaces  $M(a,b)_\pi$ and $M(a,\theta)_\pi$ are given in \eqref{*} in Subsection \ref{filter}, $L^2_q$ is the Sobolev norm and $L^2_{q,\delta}$ is the weighted Sobolev norm which is same one as in Subsection $3.3.1$ in \cite{Do02}.

\subsection{Filtered instanton Floer (co)homology}\label{filter}
In this subsection, we give the definition of the filtration of the instanton Floer (co)homology by using the technique in \cite{FS92}. First, we give the definition of usual instanton Floer homology.

%EndとAut
Let $Y$ be a homology $S^3$ and fix a Riemannian metric $g_Y$ on $Y$. Fix a non-degenerate regular perturbation $\pi \in \prod(Y)$.
Roughly speaking, the instanton Floer homology is inifinite dimensional Morse homology with respect to 
\[
cs_{Y,\pi} :\widetilde{\B}^*(Y) \ri \R.
\]

Floer defined $\ind: \widetilde{R}^*(Y)_\pi \ri \z$, called the Floer index. 
The (co)chains of the instanton Floer homology are defined by
\[
CF_i:= \z \left\{ [a] \in \widetilde{R}^*(Y)_\pi \middle | \ind(a)=i \right\}(CF^i:= \hom(CF_i,\z)).
\]
The boundary maps $\partial :CF_i \ri CF_{i-1}(\delta:CF^i\ri CF^{i+1})$ are given by
\[
\partial ([a]) := \sum_{b \in \widetilde{R}^*(Y)_\pi \text{ with } \ind(b)=i-1}\# (M(a,b)/\R) [b]\ (\delta:=\partial^*),
\] 
where $M(a,b)$ is the space of trajectories of $cs_{Y,\pi} $ from $a$ to $b$. We now write the explicit definition of $M(a,b)$. Fix a positive integer $q\geq3$. Let $A_{a,b}$ be an $SU(2)$ connection on $Y \times \R$ satisfying $A_{a,b}|_{Y\times (-\infty,1]}=p^*a$ and $A_{a,b}|_{Y\times [1,\infty)}=p^*b$ where $p$ is projection $Y\times \R \ri Y$.

\begin{align}\label{*}
M(a,b)_\pi:=\left\{A_{a,b}+c  \middle | c \in \Omega^1(Y\times \R)\otimes \su_{L^2_q}\text{ with } \eqref{pASD} \right\}/ \G(a,b),
\end{align}
where $\G(a,b)$ is given by 
\[
\G(a,b):=\left\{ g \in \aut(P_{Y\times \R})\subset {\End(\mathbb{C}^2)    }_{L^2_{q+1,\text{loc}}} \middle | \nabla_{A_{a,b}}(g) \in L^2_q \right\}.
\]
The action of $\G(a,b)$ on $\left\{A_{a,b}+c  \middle | c \in \Omega^1(Y\times \R)\otimes \su_{L^2_q}\text{ with }\eqref{pASD} \right\}$ given by the pull-backs of connections. The space $\R$ has an  action on $M(a,b)$ by the translation. Floer show that $M(a,b)/\R$ has structure of a compact oriented 0-manifold whose orientation is induced by the orientation of some determinant line bundles and $\partial^2=0$ holds.
The instanton Floer (co)homology $HF_*(Y)(HF^*(Y))$ is defined by 
\[
HF_*(Y):= \ker \partial  / \im \partial \ (HF^*(Y):= \ker \delta  / \im \delta).
\]

Second, we introduce the filtration in the instanton Floer homology. This filtration is essentially considered by Fintushel-Stern in \cite{FS92}. We follow Fintushel-Stern and use the class of perturbations which they call $\epsilon$-perturbation defined in Section $3$ of \cite{FS92}. They constructed $\z$-graded Floer homology whose chains are generated by the critical points of $cs_{Y,\pi}$ with $cs_Y(a)\in (m,m+1)$. We now consider Floer homology whose chains generated by the critical points of $cs_{Y,\pi}$ with $cs_Y(a)\in (-\infty,r)$.

Let $\widetilde{R}(Y) $ be as in Section \ref{main} and $\Lambda_Y$ be $\R \setminus \im \ cs_Y|_{\widetilde{R}(Y)}$. For $r \in \Lambda_Y$, we define the filtered instanton homology $HF^r_*(Y) (HF^*_r(Y))$ by using $\epsilon$-perturbation.  For $r \in  \Lambda_Y$, we set $\epsilon= \inf_{a \in \widetilde{R}(Y)} |cs_Y(a)-r|$ and choose such a $\epsilon$-perturbation $\pi$.  
\begin{defn}[Filtered version of the instanton Floer homology]\label{defofHFr*}\upshape
The chains of the filtered instanton Floer (co)homology are defined by
\[
CF^r_i:= \z \left\{ [a] \in \widetilde{R}^*(Y)_\pi \middle | \ind(a)=i,\ cs_{Y,\pi}(a)<r \right\}\ (CF^i_r:= \hom(CF_r^i,\z)).
\]
The boundary maps $\partial^r :CF_i^r \ri CF_{i-1}^r$(resp. $\delta^r:CF^i_r\ri CF^{i+1}_r $) are given by the restriction of $\partial$ to $CF_i^r$(resp. $\delta^r:=(\partial^r)^*$). This maps are well-defined and $(\partial^r)^2=0$ holds as in Section $4$ of \cite{FS92}.
The filtered instanton Floer (co)homology $HF^r_*(Y)$(resp. $HF_r^*(Y))$ is defined by 
\begin{align*}
HF^r_*(Y):= \ker \partial^r  / \im \partial^r \ (\text{resp. }HF^*_r(Y):= \ker \delta^r  / \im \delta^r).
\end{align*}
\end{defn}
We can also show $HF^r_i(Y)$ and $HF^i_r(Y)$ are independent of the choices of the perturbation and the metric by  similar discussion in \cite{FS92}.
For $r \in \Lambda_Y$, we now introduce obstruction classes in $HF^1_r(Y)$. These invariants are  generalizations of  $[\theta] \in HF^1(Y)$ considered in Subsection $7.1$ of \cite{Do02} and Subsection $2.1$ of \cite{Fr02}. 
\begin{defn}[Obstruction class]\label{defiofthetar}\upshape
For $r \in \Lambda_Y$, we set homomorphism 
\[
\theta^r :CF^r_1\ri \z
\]
 by
\begin{align}
\theta^r(a):= \# (M(a,\theta)_\pi/\R).
\end{align}
As in \cite{Do02} and \cite{Fr02}, we use the weighted norm on $M(a,\theta)_\pi$ to use Fredholm theory. From the same discussion for the proof of $(\delta^r)^2=0$, we can show $\delta^r (\theta^r)=0$. Therefore it defines the class $[\theta^r] \in HF^1_r(Y)$. We call the class $[\theta^r]$ {\it obstruction class}.
\end{defn}
The class $[\theta^r]$ does not depend on the small perturbation and the metric. The proof is similar to the proof for original one $[\theta]$. Now we give the proof of Example \ref{exbri}.
\\
\begin{proof}
Because $X$ is homotopy equivalent to $S^3\times S^1$, $Q^i_X=\infty$ for $i \in \n$. If the element $[\theta^1] \in HF^1_r(-\Sigma(2,3,6k-1))$ does not vanish for $r=1$, we can apply Theorem \ref{mainthm}. Fr\o yshov showed $0 \neq [\theta] \in HF^1(-\Sigma(2,3,6k-1))$ by using the property of h-invariant of Proposition $4$ in \cite{Fr02}. Then we get nonzero homomorphism $\theta:CF^r_1(Y)\ri \z$ for $r=\infty$. ( If $r=\infty$, $HF^1_r$ is the usual instanton Floer cohomology by the definition.) 
By using calculation about the value of the Chern-Simons functional of Section 7 in \cite{FS92}, 
 we can see $\theta^1:CF_1^r\ri \z$ is nonzero for $r=1$ and $CF_i^r$ is zero for $r=1$ and $i\in 2\z$. This implies 
 \[
 0 \neq [\theta^1] \in HF^1_r(-\Sigma(2,3,6k-1)) \text{ for } r=1.
 \]
 \qed
\end{proof}

\subsection{Fredholm theory and moduli theory on 4-manifolds with periodic end}\label{Fred}
 In \cite{T87}, Taubes constructed the Fredholm theory of some class of elliptic operators on 4-manifolds with periodic ends. He also extend moduli theory of $SU(2)$ gauge theory on such non-compact 4-manifolds.
In this subsection, we review Fredholm theory of a certain elliptic operator on $4$-manifolds with the periodic ends  as in \cite{T87} and define the Fredholm index of the class of operators, which gives the formal dimension of a suitable instanton moduli space on such non-compact 4-manifolds. First we formulate the 4-manifolds with periodic ends.

Let $Y$ be an oriented homology $S^3$ as in Section \ref{main}. 
Let $W_0$ be an oriented homology cobordism from $Y$ to $-Y$. We get a compact oriented 4-manifold $X$ by pasting $W_0$ with itself along $Y$ and $-Y$. We give several notations in our argument.
\begin{itemize}
\item The manifold $W_i$ is a copy of $W_0$ for $i \in \z$
\item We denote $\partial(W_i)$ by $Y^i_+\cup Y^i_-$ where $Y^i_+$(resp. $Y^i_-$) is equal to $Y$(resp. $-Y$) as oriented manifolds.
\item For  $(m,n)\in (\z\cup \{ -\infty\} ) \times (\z\cup \{\infty\})$ with $m<n$, we set
\[
\displaystyle W[m,n]:=\coprod_{m\leq i \leq n} W_i  / \{Y^j_- \sim Y^{j+1}_+\ j \in \{m,\cdots ,n\}\}.
\]
\end{itemize}

We denote by $W$ the following non-compact 4-manifold
\[
W:= Y\times (-\infty,0] \cup W[0,\infty]/\{\partial (Y\times (-\infty,0]) \sim Y^{0}_+\}.
\] 
For a fixed Riemannian metric $g_Y$ on $Y$, we choose a Riemannian metric $g_W$ on $W$ which satisfies
\begin{itemize}
\item $g_W|_{Y\times (-\infty,-1]}=g_Y\times g^{\text{stan}}_\R$.
\item The restriction $g_W|_{W[0,\infty]}$ is a periodic metric.
\end{itemize}
There is a natural orientation on $W[0,\infty]$ and $W$ induced by the orientations of $W_0$.  The infinite cyclic covering space of $X$ can be written by 
\[
\widetilde{X} \cong W[-\infty,\infty].
\]
Let $T$ be the deck transformation of $\widetilde{X}$ which maps each $W_i$ to $W_{i+1}$. By restriction, $T$ has an action on $W[0,\infty]$. We use the following smooth functions $\tau$ and $\tau'$ on $W$
\[
\tau,\ \tau': W \ri \R\,
\]
satisfying 
\begin{itemize}
\item $\tau (T|_{W[0,\infty]}(x))= \tau(x)+1$, $\tau'(T|_{W[0,\infty]}(x))=\tau'(x)+1$ for $x \in W[0,\infty]$.
\item $\tau|_{Y\times (-\infty,-2]}=0$, $\tau'(y,t)= -t$ for $(y,t) \in Y\times (-\infty,-2]$.
\end{itemize}
 By the restriction of $\tau$, we have a function on $W[0,\infty]$ which we denote by same notation $\tau$.

In this subsection, we review the setting of the configuration space of fields on $W$ and define the Fredholm index of a kind of operator on $W$. We fix $\pi \in \prod(Y)$ in Subsection $\ref{hol}$ and assume that $\pi$ is a non-degenerate perturbation. Let $P_W$ be the product $SU(2)$ bundle. 
\begin{defn}\label{confset}\upshape
For each element $[a] \in \widetilde{R}(Y)$, we fix an $SU(2)$ connection $A_a $ on $P_W$ which satisfying $A_{a}|_{Y\times (-\infty,-1]} =\pr_1^*a$ and $A_{a}|_{W[0,\infty]}=\theta$. If $a$ is an irreducible (resp. reducible) connection, we define the space of connections on $P_W$ by
\[
\A^W(a)_\delta := \left\{ A_{a}+c \ \middle| c \in \Omega^1(W)\otimes \su_{L^2_{q,\delta}}  \right\},
 \]
 \[
 (\text{ resp.}\  \A^W(a)_{(\delta,\delta)} := \left\{ A_{a}+c \ \middle|  c \in \Omega^1(W)\otimes \su_{L^2_{q,(\delta,\delta)}}\right\}\ )
 \]
where $\Omega^1(W)\otimes \su_{L^2_{q,\delta}}$(resp. $\Omega^1(W)\otimes \su_{L^2_{q,(\delta,\delta)}}$) is the completion of $\Omega^1(W)\otimes \su $ with $L^2_{q,\delta}$-norm(resp. $L^2_{q,(\delta,\delta)}$-norm), $q $ is a natural number greater than $3$, and $\delta$ is a positive real number. For $f \in \Omega^i(W)\otimes \su$ with compact support, we define $L^2_{q,\delta}$-norm(resp. $L^2_{q,(\delta,\delta)}$-norm) by 
\[
||f||^2_{L^2_{q,\delta}}:= \sum_{0\leq j \leq q} \int_W e^{\delta \tau}  \left|\nabla_{\theta}^j f \right|^2 d\text{vol} ,
\]
\[
(\text{resp. } ||f||^2_{L^2_{q,(\delta,\delta)}}:= \sum_{0\leq j \leq q} \int_W e^{\delta \tau'}  \left|\nabla_{\theta}^j f \right|^2 d\text{vol}\  )
\]
where $\nabla_{\theta}$ is covariant derivertive with respect to the product connection $\theta$. We use a periodic metric $||-||$ on the bundle. Its completion is denoted by $\Omega^i(W)\otimes \su_{L^2_{q,\delta}}$.
We define the gauge group 
\[
\G^W(a)_{\delta}:= \left\{ g \in \aut(P_W)_{L^2_{q+1,\text{loc}}} \middle| \nabla_{A_{a}}(g) \in L^2_{q,\delta}
  \right\}
\]
\[
( \text{ resp. }\G^W(a)_{(\delta,\delta)}:= \left\{ g \in \aut(P_W)_{L^2_{q+1,\text{loc}}} \middle| \nabla_{A_{a}}(g) \in L^2_{q,(\delta,\delta)}  
  \right\}),
\]
which has the action on $\A^W(a)_\delta$ induced by the pull-backs of connections.
The space $\G^W(a)_{\delta}$(resp. $\G^W(a)_{(\delta,\delta)}$) has structure of Banach Lie group and the action of $\G^W(a)_{\delta}$ on $\A^W(a)_\delta$(resp. $\G^W(a)_{(\delta,\delta)}$ on $\A^W(a)_{(\delta,\delta)}$) is smooth.
{\it The configuration space for} $W$ is defined by 
\[
\B^W(a)_\delta := \A^W(a)_\delta /\G^W(a)_{\delta} (\text{resp. } B^W(a)_{(\delta,\delta)} := \A^W(a)_{(\delta,\delta)} /\G^W(a)_{(\delta,\delta)}).
\]
Let $s$ be a smooth function from $W$ to $[0,1]$ with 
\[
s|_{Y\times (-\infty -2]}=1,\ s|_{Y\times [-1,0] \cup_Y W[0,\infty]}=0.
\]
We define the instanton moduli space for $W$ by 
\begin{align}
M^W(a)_{\pi,\delta}:=\{[A] \in \B^W(a)_\delta |  \mathcal{F}_{\pi}(A)=0 \}
\end{align} 
where $\mathcal{F}_{\pi}$ is the perturbed ASD-map 
\[
\mathcal{F}_{\pi}(A):=F^+(A)+s\pi(A).
\]
\end{defn}
For each $A \in \A^W(a)_\delta$, we have the bounded linear operator:
\begin{align}\label{elli}
d^{*_{L^2_\delta}}_A+d\mathcal{F}_A: \Om^1(W)\otimes \su_{L^2_{q,\delta}}  \ri (\Om^0(W)  \oplus \Om^+(W))\otimes \su_{L^2_{q-1,\delta}}
\end{align}
\begin{align}\label{thetacase}
(\text{ resp. }d^{*_{L^2_{(\delta,\delta)}}}_A+d\mathcal{F}_A: \Om^1(W)\otimes \su_{L^2_{q,(\delta,\delta)}}  \ri (\Om^0(W)  \oplus \Om^+(W))\otimes \su_{L^2_{q-1,(\delta,\delta)}}).
\end{align}

Taubes gave a criterion for the operator $d^{*_{L^2_\delta}}_A+d\mathcal{F}_A=d^{*_{L^2_\delta}}_A+ d^+_A+sd\pi^+_A$ in $\eqref{elli}$(resp. \eqref{thetacase}) to be Fredholm in Theorem $3.1$ of \cite{T87}.

\begin{thm}[Taubes,\cite{T87}]\label{fred}
There exists a descrete set $D$ in $\R$ with no accumulation points such that $\eqref{elli}$$($resp. \eqref{thetacase}$)$ is Fredholm for each $\delta$ in $\R \setminus D$.
\end{thm}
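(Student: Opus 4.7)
The plan is to split the analysis between the two asymptotic models of $W$: the cylindrical end $Y\times(-\infty,-1]$ and the periodic end $W[0,\infty]$. On the cylindrical end, the operator is asymptotic to $\partial_t - L_{a,\pi}$, where $L_{a,\pi}$ is the self-adjoint elliptic operator on $Y$ obtained by linearizing the perturbed flatness equation at $a$. By standard APS-type theory on weighted cylinders, Fredholmness in $L^2_{q,\delta}$ on this end holds precisely when $\delta$ is not an eigenvalue of $L_{a,\pi}$; non-degeneracy of $\pi$ at $a$ makes $0$ a regular value, so a whole half-neighbourhood of $0$ is allowed and the excluded set is a discrete subset of $\R$ accumulating only at $\pm\infty$.

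The substantive step is the periodic end. Since the cut-off function $s$ vanishes on $W[0,\infty]$, the operator there reduces to the unperturbed $d^{*_{L^2_\delta}}_A + d^+_A$ pulled back from $W[0,\infty]$ with its periodic metric. I would apply the Fourier--Laplace (Floquet) transform along the $\z$-action generated by $T$. Concretely, for $z\in\cb^\ast$ set
\[
\hat u_z \;=\; \sum_{n\in\z} z^{-n}\, T^{n\ast} u,
\]
which intertwines sections on $W[0,\infty]$ lying in $L^2_{q,\delta}$ with holomorphic families of sections on the fundamental domain $W_0$ (twisted by the flat line bundle $L_z$ determined by the monodromy $z$) for $z$ in the circle $|z|=e^{-\delta/2}$. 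The transform intertwines $d^*_A+d^+_A$ with an analytic family $D_z$ of elliptic operators on the closed-up fundamental domain, each of index zero by compactness. By the Kato--Kuranishi analytic Fredholm theorem, either $D_z$ is nowhere invertible or the singular set
\[
\Sigma \;=\; \{\,z\in\cb^\ast \mid D_z \text{ not invertible}\,\}
\]
is a discrete subset of $\cb^\ast$. The original operator is Fredholm on $L^2_{q,\delta}$ exactly when $\Sigma$ is disjoint from the circle $|z|=e^{-\delta/2}$, i.e.\ when $\delta$ avoids the image of $\Sigma$ under $z\mapsto -2\log|z|$. Since $\Sigma$ is discrete in $\cb^\ast$, this image is discrete in $\R$.

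Taking the union of the discrete exceptional sets from the two ends yields the desired discrete set $D\subset\R$ with no finite accumulation. The reducible case \eqref{thetacase} is identical after replacing $L^2_\delta$ by the doubly weighted $L^2_{(\delta,\delta)}$ on both ends, where the weight on the cylindrical side is now also exponential (to handle the trivial flat limit $\theta$); the transform argument on the periodic end is unchanged. The main obstacle I anticipate is verifying that the Fourier--Laplace transform genuinely identifies the weighted $L^2$ analysis on $W[0,\infty]$ with a holomorphic family of elliptic problems on the compact fundamental domain in a way that preserves Fredholm indices and invertibility, and in particular that the family $D_z$ extends analytically across the circle $|z|=e^{-\delta/2}$; this is where the detailed estimates of Taubes in \cite{T87} are indispensable.
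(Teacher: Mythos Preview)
The paper does not supply its own proof of this statement: Theorem~\ref{fred} is quoted directly from Taubes~\cite{T87}, and the only content the paper adds is the description of $D$ in terms of the twisted cohomology groups $H^i_z$ of the complex~\eqref{cpx} on the closed manifold $X$. Your sketch is a faithful outline of Taubes's argument, so in that sense it agrees with what the paper defers to.

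Two points of your outline do not match the specific setup here. First, for the operator~\eqref{elli} the weight function $\tau$ vanishes identically on the cylindrical end $Y\times(-\infty,-2]$, so the $L^2_{q,\delta}$-norm there is the ordinary $L^2_q$-norm for every $\delta$; the cylindrical end therefore contributes nothing to $D$, and Fredholmness on that end comes purely from the non-degeneracy of $a$ (no ``$\delta$ not an eigenvalue of $L_{a,\pi}$'' condition arises). Only in the reducible case~\eqref{thetacase}, with the weight $\tau'$, is the cylindrical side genuinely weighted. Second, the analytic family you call $D_z$ is not an operator on the compact-with-boundary fundamental domain $W_0$; it is the twisted operator $d^{*}_{\theta,z}+d^+_{\theta,z}$ on the \emph{closed} manifold $X$ (equivalently, $W_0$ with its two boundary copies of $Y$ glued via the monodromy $z$), exactly as the paper writes the complex~\eqref{cpx}. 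This is why the paper phrases the obstruction as non-acyclicity of $H^*_z$ on $X$ rather than as a boundary-value problem. Finally, you invoke the analytic Fredholm alternative but do not rule out the ``nowhere invertible'' branch; in Taubes's argument this requires exhibiting at least one $z$ for which $H^*_z=0$, and since $z=1$ gives $H^1_1\cong H^1(X;\R)\otimes\su\neq 0$ for a homology $S^3\times S^1$, this step is not automatic and is part of what \cite{T87} establishes.
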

The discrete set $D$ is defined by 
\[
D:=\{\delta \in \R |   \text{ the cohomology groups } H^i_z\text{ are acyclic for all $z$ with } |z|=e^{\frac{\delta}{2}} \}.
\]
The cohomology groups $H^i_z$ are given by the complex:
\begin{align}\label{cpx}
0\ri \Om^0(X) \otimes \su \xri{d_{\theta,z}} \Om^1(X)\otimes \su \xri{d^+_{\theta,z}} \Om^+(X)\otimes \su \ri 0,
\end{align}
where
 \[
 d_{\theta,z}:\Om^0(X) \otimes \su \xri{} \Om^1(X)\otimes \su 
 \]and
 \[
 d^+_{\theta.z} :\Om^1(X)\otimes \su \ri \Om^+(X)\otimes \su
 \]
are given by 
 \[
 d_{\theta,z}(f)=z^\tau d_{p^*\theta} (z^{-\tau} (p^*f)),\  d^+_{\theta,z}(f)=z^\tau d^+_{p^*\theta} (z^{-\tau}(p^*f)),
 \]
 where $p$ is the covering map $\widetilde{X}\ri X$.
 (We fix a branch of ln $z$ to define $z^{\tau} = e^{\tau ln z}$.) In above definition,  $d_{\theta,z}(f)$ and $d^+_{\theta,z}(f)$ are sections of $p^*P_{X}$, however these are invariant under the deck transformation, we regard $d_{\theta,z}(f)$ and $d^+_{\theta,z}(f)$
as sections on $P_X$.

 The operators $d_{\theta,z}$ and $d^+_{\theta.z}$ in \eqref{cpx} depend on the metric on $X$ and $\tau$ however the cohomology groups $H_i^z$ are independent of the choice of them.
We now introduce the formal dimension of the instanton moduli spaces:
\begin{defn}\upshape
Suppose $a$ is an irreducible critical point of $cs_{Y,\pi}$.
From Theorem \ref{fred}, there exists $\delta_0>0$  such that $\eqref{elli}$ is Fredholm for any $\delta \in (0,\delta_0)$ and $A \in \A^W(a)_{\delta}$. We define {\it the formal dimension} $\ind_W(a)$ of the instanton moduli spaces for $W$ by the Fredholm index of \eqref{elli}. For the case of $a=\theta$, we also set $\ind_W(a)$ as the Fredholm index of \eqref{thetacase}.
\end{defn}

The formal dimension $\ind_W(a)$ is calculated by using the following proposition.
\begin{prop}\label{calfred}
Suppose $a$ is an irreducible critical point of $cs_{Y,\pi}$.
The formal dimension $\ind_W(a)$ is equal to the Floer index $\ind(a)$ of $a$. If $a$ is equal to $\theta$, $\ind_W(a)=\ind (a)=-3$.
\end{prop}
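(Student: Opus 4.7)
The plan is to compute $\ind_W(a)$ by comparing $W$ with the ``straightened'' manifold $W'=Y\times\R$ obtained by replacing the periodic end $W[0,\infty]$ with a cylindrical end $Y\times[0,\infty)$ asymptotic to $\theta$, and to show that (i) $\ind_{W'}(a)=\ind(a)$ is the standard Floer index, and (ii) the difference $\ind_W(a)-\ind_{W'}(a)$ vanishes via Taubes's periodic-end excision. Note that the perturbation is not an issue on the differing half, since $s|_{W[0,\infty]}=0$ forces $\mathcal{F}_\pi$ to coincide with the unperturbed $F^+$ there.

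For (i), $W'=Y\times\R$ carries the two-ended ASD--Coulomb operator with asymptotics $a$ at $-\infty$ and $\theta$ at $+\infty$, weighted at each reducible end. By Floer's spectral-flow computation (as in \cite{Fl88}, \cite{Do02}), its Fredholm index is exactly the Floer index $\ind(a)$ for irreducible $a$, and $-3$ for $a=\theta$ (the $-3$ reflecting the weighted norm killing the three-dimensional stabilizer $H^0_\theta\cong\su$, consistent with the convention $\ind(\theta)=-3$).

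For (ii), $W$ and $W'$ agree on $Y\times(-\infty,0]$ and differ only on the complementary end, both asymptotic to the trivial connection $\theta$. By Taubes's periodic-end index theorem (Theorem \ref{fred}, \cite{T87}, valid since \eqref{cpx} is acyclic for $|z|=e^{\delta/2}$ with $\delta\in(0,\delta_0)$), the weighted Fredholm index on the periodic half $W[0,\infty]$ reduces to the Atiyah--Singer index of the deformation operator for the trivial connection on the compact quotient $X$; the corresponding quantity for the cylindrical half $Y\times[0,\infty)$ is the analogous index on the closed model $Y\times S^1$. Both indices evaluate to
\[
-3\bigl(1-b_1+b_+\bigr)=0,
\]
since $X$ and $Y\times S^1$ are both homology $S^3\times S^1$'s and the bundle is the product $SU(2)$-bundle (so the Pontryagin term vanishes). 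Hence the difference $\ind_W(a)-\ind_{W'}(a)$ vanishes, and combining (i) and (ii) yields $\ind_W(a)=\ind(a)$ in both the irreducible and the reducible case.

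The main obstacle is the excision step (ii): identifying the periodic weighted Fredholm index with a classical Atiyah--Singer index on the closed quotient requires the full force of Taubes's framework in \cite{T87}, specifically a careful APS-style gluing with matching weighted norms along the seam $Y\times\{0\}$ and a verification that the ``cover-to-quotient'' index identity is valid at all $|z|=e^{\delta/2}$ in the acyclicity range. Once the analytic bridge is in place, the final cancellation is purely topological, following from the fact that any homology $S^3\times S^1$ has $1-b_1+b_+=0$.
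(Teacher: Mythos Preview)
Your overall strategy---compare $W$ with the straightened cylinder $W'=Y\times\R$ and then argue that the periodic and cylindrical ends contribute equally to the index---is a legitimate alternative, and step~(i) is standard. The paper proceeds differently: rather than swapping out the periodic end, it caps off the \emph{cylindrical} end of $W$ with an auxiliary compact $Z$ satisfying $\partial Z=-Y$, obtaining the purely periodic-end manifold $Z^+=Z\cup_Y W[0,\infty]$, and invokes index additivity under that gluing:
\[
\ind\bigl(d^{*}_{B_a}+d^{+}_{B_a}\text{ on }Z\cup_Y Y\times[0,\infty)\bigr)\;+\;\ind_W(a)\;=\;\ind\bigl(d^{*}_\theta+d^{+}_\theta\text{ on }Z^+\bigr).
\]
Taubes's Proposition~5.1 in \cite{T87} (which uses only that $H_*(W[0,\infty])\cong H_*(S^3)$) evaluates the right-hand side as $-3(1-b_1(Z)+b^+(Z))$, and Donaldson's Proposition~3.17 in \cite{Do02} evaluates the first term on the left as $-\ind(a)-3(1-b_1(Z)+b^+(Z))$; subtraction gives $\ind_W(a)=\ind(a)$.

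Your step~(ii), however, rests on the assertion that the periodic-end contribution equals the Atiyah--Singer index of the deformation operator on the closed quotient $X$. That is not what Taubes proves, nor is it what the paper uses: Taubes's formula for $Z^+$ is stated in terms of the Betti numbers of the \emph{compact piece} $Z$, and the relevant hypothesis is that the end is homologically a half-line, not any property of $X$ as a homology $S^3\times S^1$. The equality $1-b_1(X)+b^+(X)=0$ plays no role in the computation. You correctly flag this step as the main obstacle, but the remedy is not a ``cover-to-quotient'' index identity; it is to cap both $W$ and $W'$ with the same $Z$ and observe that Taubes's answer for $Z^+$ coincides with the standard weighted cylindrical-end answer for $Z\cup_Y Y\times[0,\infty)$, namely $-3(1-b_1(Z)+b^+(Z))$. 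At that point your excision argument has become a rearrangement of the paper's computation.
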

\begin{proof}
First we take a compact oriented 4-manifold $Z$ with $\partial Z=-Y$. It is easy to show there is an isomorphism $H^*(W[0,\infty]) \cong H^*(S^3)$. We define $Z^+:= Z\cup_Y W[0,\infty]$ and fix a periodic Riemannian metric $g_{Z^+}$ satisfying $g_{Z^+}|_{W[0,\infty]}=g_W$. In Proposition $5.1$ of \cite{T87}, Taubes computed the Fredholm index of $d^+_\theta+ d^{*_{L^2_\delta}}_\theta $ as a operator on $Z^+$ in the situation that $H_*(W[0,\infty],\z)\cong H_*(S^3,\z)$(The proof is given by using the admissibility of each segment $W_0$, however Taubes just use the condition $H_*(W[0,\infty],\z)\cong H_*(S^3,\z)$.):
\[
\ind (d^+_\theta+ d^*_\theta) =-3(1-b_1(Z)+b^+(Z))
\]
for a small $\delta$. Fix an $SU(2)$-connection $A_{a,\theta}$ on $W$ with $A|_{Y\times (-\infty,-1]}=a$, $A|_{W[0,\infty]}=\theta$ and an $SU(2)$-connection $B_{a}$ on $Z\cup_Y Y\times [0,\infty)$ 
  By the similar discussion about gluing of the operators on cylindrical end in Proposition $3.9$ of \cite{Do02}, we have 
\[
\ind (d_{B_{a}}^*+d_{B_{a}}^+)+ \ind (d_{A_{a,\theta}}^*+d_{A_{a,\theta}}^+)= \ind (d_{\theta}^*+d_{\theta}^+).
\]
Donaldson show that $\ind (d_{B_{a}}^*+d_{B_{a}}^+)$ is equal to $-\ind(a)-3(1-b_1(Z)+b^+(Z))$ in Proposition $3.17$ of \cite{Do02}.
The second statement is similar to the first one.
\qed
\end{proof}

\section{Chern-Simons functional for homology $S^3\times S^1$}\label{excs}
For a pair $(X,\phi)$ consisting of an oriented 4-manifold and non-zero element $0\neq \phi \in H^1(X,\z)$, we  generalize the Chern-Simons functional to a functional $cs_{(X,\phi)}$ on the flat connections on $X$. We define the invariants $Q^i_X \in \R_{\geq0} \cup\{\infty\}$ for $i\in \n$ by using the value of $cs_{(X,\phi)}$. In our construction, $cs_{(X,\phi)}$ cannot be extended to a functional for arbitrary $SU(2)$ connections on $X$.

Let $X$ be an oriented closed 4-manifolds equipped with $0 \neq \phi \in H^1(X,\z)$ and $p:\widetilde{X}^{\phi}\ri X$ be the $\z$-hold covering of $X$ corresponding to $\phi \in H^1(X,\z)\cong [X,B\z]$. Recall that the bundle $P_{X} $ and the set $\widetilde{R}(X)$ as in Section \ref{main}. Let $f$ be a smooth map representing the class $\phi \in H^1(X,\z)\cong [X,S^1]$, and $\tilde{f}$ is a lift of $f$.

\begin{defn}\label{csX}\upshape [Chern-Simons functional for a homology $S^3\times S^1$]
We define the Chern-Simons functional for $X$ as the following map
\[
cs_{(X,\phi)}:\widetilde{R}(X)\times \widetilde{R}(X) \ri \R,
\]
\[
cs_{(X,\phi)}([a],[b]):= \frac{1}{8\pi^2}\int_{\widetilde{X}^\phi} Tr(F(A_{a,b})\wedge F(A_{a,b})),
\]
where $a,b$ are flat connections on $P_X$ and $A_{a,b}$ is an $SU(2)$-connection on $P_{\widetilde{X}^\phi}:=\widetilde{X}^\phi \times SU(2)$ which satisfies $A_{a,b}|_{\tilde{f}^{-1}(-\infty,-r]}=p^*a$ and $A_{a,b}|_{\tilde{f}^{-1}([r,\infty))}=p^*b$ for some $r>0$.
\end{defn}

We have an alternative description of $cs_{(X,\phi)}([a],[b])$ when a closed oriented $3$-manifold $Y$ is given as a sub-manifold of $X$ satisfying $i_*[Y]=\text{PD}(\phi) \in H_3(X,\z)$, where $i$ is the inclusion $Y\ri X$. Such $Y$ is given as an inverse image of a regular value of $f$. We can take $Y$ to be connected, and we assume this. We denote by $W_0$ the cobordism from $Y$ to itself obtained from cutting $X$ open along $Y$. Since $Y$ is connected and $\phi \neq 0$, $W_0$ is also connected. Then we choose the idenitification of $\widetilde{X}^\phi$ and $\dots \cup_Y W_0 \cup_Y W_1 \cup_Y \dots $. We have the following formula.
\begin{lem}\label{sumformula}
\[
cs_{(X,\phi)}([a],[b])=cs_Y([i^*a])-cs_Y([i^*b])
\]

\end{lem}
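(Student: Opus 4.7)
The plan is to reduce the integral defining $cs_{(X,\phi)}([a],[b])$ to a boundary integral via Stokes' theorem, and then identify the boundary terms with $cs_Y(i^*a)$ and $cs_Y(i^*b)$. Since $a$ and $b$ are flat connections on $P_X$, the pullbacks $p^*a$ and $p^*b$ are flat on $P_{\widetilde{X}^\phi}$, so by the defining property of $A_{a,b}$ the curvature $F(A_{a,b})$ vanishes outside the compact slab $\tilde{f}^{-1}[-r,r]$. Consequently,
\[
\int_{\widetilde{X}^\phi} Tr(F(A_{a,b})\wedge F(A_{a,b})) \;=\; \int_{\tilde{f}^{-1}[-r,r]} Tr(F(A_{a,b})\wedge F(A_{a,b})).
\]

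Next I would apply the Chern--Simons transgression identity $Tr(F(A)\wedge F(A)) = d\omega(A)$, where $\omega(A) := Tr(A\wedge dA + \tfrac{2}{3}A\wedge A\wedge A)$; this holds on the trivialized bundle $P_{\widetilde{X}^\phi}$. Stokes' theorem then turns the right-hand side above into a sum of integrals over the two boundary components of $\tilde{f}^{-1}[-r,r]$. To evaluate them I would choose $r$ so that $\pm r$ are integer translates of the regular value $t_0$ with $Y = f^{-1}([t_0])$; using the identification $\widetilde{X}^\phi \cong \bigcup_{n\in\z}W_n$, each fiber $\tilde{f}^{-1}(\pm r)$ is then an exact copy of $Y$, mapped diffeomorphically onto $Y\subset X$ by the covering projection $p$. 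On $\tilde{f}^{-1}(r)$ the connection $A_{a,b}$ coincides with $p^*b$, so its pullback under this identification equals $i^*b$; similarly $A_{a,b}|_{\tilde{f}^{-1}(-r)}$ corresponds to $i^*a$. Hence each boundary integral evaluates to $8\pi^2$ times the Chern--Simons invariant of the corresponding restriction.

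Combining these contributions and dividing by $8\pi^2$ yields the claimed formula. The one delicate point is the sign bookkeeping: under the outward-normal convention on $\partial\tilde{f}^{-1}[-r,r]$, the two boundary fibers $\tilde{f}^{-1}(\pm r)$ inherit opposite orientations, so one contributes positively and the other negatively. With the paper's convention relating the coorientation of $Y\subset X$ induced by $\phi$ to the direction of $\tilde{f}$, the resulting signs assemble into $cs_Y(i^*a) - cs_Y(i^*b)$ as stated. Everything else is routine: the slab is compact, the Chern--Simons transgression is standard, and the covering map cleanly identifies the boundary data with $i^*a$ and $i^*b$.
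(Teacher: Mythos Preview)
Your argument is correct and follows essentially the same route as the paper: restrict the integral to a compact slab where the curvature is supported, apply the Chern--Simons transgression via Stokes, and identify the boundary contributions with $cs_Y$ of the restricted connections. The only cosmetic difference is that the paper integrates over $W[-N,N]$ for $N$ large (whose boundary components are copies of $Y$ by construction), whereas you integrate over $\tilde{f}^{-1}[-r,r]$ and then need to arrange that $\tilde f^{-1}(\pm r)$ are copies of $Y$; enlarging your slab to $W[-N,N]\supset \tilde f^{-1}[-r,r]$ makes this identification automatic and sidesteps the small bookkeeping about choosing $r$.
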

\begin{proof}
Let $A_{a,b}$ the $SU(2)$-connection on $P_{\widetilde{X}^\phi}$ in Definition \ref{csX}.
Take a natural number $N$ large enough to satisfy 
\[
\tilde{f}^{-1}([-r,r]) \subset W[-N,N],
\]
for which we have
\begin{align*}\label{cscal}
\int_{\widetilde{X}^\phi} Tr(F(A_{a,b})\wedge F(A_{a,b}))= \int_{W[-N,N]}Tr(F(A_{a,b})\wedge F(A_{a,b})).
\end{align*}
\[
=cs_Y(i^*_+A_{a,b})-cs_Y(i^*_-A_{a,b}).
\]
Here  $i_+$(resp. $i_-$) is inclusion from $Y$(resp. $-Y$) to $X$, and we use the Stokes theorem.
\qed
\end{proof}

When $X$ is equal to $Y\times S^1$, this map $cs_{(X,\phi)}:\widetilde{R}(X)\times \widetilde{R}(X) \ri \R$ essentially coincides with the restricton of Chern-Simons functional $cs_Y$ on $Y$ by the following sense.
\begin{prop}For $[a]\in \widetilde{R}(Y\times S^1)$, the restriction $[i^*a] \in \widetilde{R}(Y)$ satisfies 
\[
cs_Y([i^*a])=cs_{(Y\times S^1,\text{PD}([Y]))}([a],[\theta]),
\]
where $i$ is a inclusion $Y=Y\times 1 \ri Y\times S^1$ and \text{PD} is the Poincar\'e duality.
\end{prop}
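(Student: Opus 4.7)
The idea is to reduce immediately to Lemma \ref{sumformula}. The one thing to check is that the trivial connection contributes nothing.

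First I would pin down the geometric setup. With $X = Y\times S^1$ and $\phi = \operatorname{PD}([Y])\in H^1(X,\z)$, the associated $\z$-covering is $\widetilde{X}^\phi = Y\times \R$, with covering map $p = \mathrm{id}_Y\times \exp$ and $\tilde{f} = \pr_\R$ a lift of $f = \pr_{S^1}$. Taking $Y = Y\times\{1\}$ realizes a connected submanifold with $i_*[Y] = \operatorname{PD}(\phi)$, cutting $X$ open along it yields the cobordism $W_0 = Y\times[0,1]$, which is connected. Hence the identification $\widetilde{X}^\phi \cong \dots \cup_Y W_{-1}\cup_Y W_0 \cup_Y W_1\cup_Y\dots$ of the preceding subsection applies verbatim.

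Next I would apply Lemma \ref{sumformula} directly, with $b = \theta$:
\[
cs_{(Y\times S^1,\operatorname{PD}([Y]))}([a],[\theta]) = cs_Y([i^*a]) - cs_Y([i^*\theta]).
\]
Finally I would observe that $i^*\theta$ is the product connection on $P_Y$, so by the defining formula
\[
cs_Y(i^*\theta) = \frac{1}{8\pi^2}\int_Y \operatorname{Tr}\bigl(0\wedge d0 + \tfrac{2}{3}\,0\wedge 0\wedge 0\bigr) = 0,
\]
and consequently $cs_Y([i^*a]) = cs_{(Y\times S^1,\operatorname{PD}([Y]))}([a],[\theta])$, as claimed.

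There is no real obstacle; the only points that deserve a moment's care are that the submanifold $Y\times\{1\}$ is indeed a regular fiber of $f$ (so that $W_0$ is connected and the hypotheses of Lemma \ref{sumformula} are met) and that the orientation conventions from the previous subsection match (the ``$+$'' side of $W_0$ is $Y\times\{1\}$ equipped with its given orientation, so the restriction $i^*a$ appearing above is exactly the one in the statement). Both are immediate.
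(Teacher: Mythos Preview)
Your proof is correct and is exactly the paper's approach: the paper states only that the proposition ``is a corollary of Lemma \ref{sumformula}'', and your argument spells out precisely that deduction, including the trivial observation $cs_Y([i^*\theta])=0$.
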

This is a corollary of Lemma \ref{sumformula}. We have the following well-definedness.

\begin{lem}
$cs_{(X,\phi)}$ does not depend on the choices of $f$, representatives $a$ and $b$, and $A_{a,b}$.
\end{lem}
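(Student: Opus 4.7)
The plan is to verify the three independence claims separately, exploiting the intrinsic observation that since $a$ and $b$ are flat on $P_X$, $F(A_{a,b})$ vanishes on the ends $\tilde f\inv((-\infty,-r])$ and $\tilde f\inv([r,\infty))$, so the integrand $\text{Tr}(F\wedge F)$ is supported in the compact region $K:=\tilde f\inv([-r,r])$. In particular the integral is finite and Stokes' theorem applies.

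For independence of the interpolating connection $A_{a,b}$ (with $a,b,f,\tilde f$ fixed), given two choices $A_0,A_1$ with identical asymptotic behaviour, the difference $\alpha:=A_1-A_0$ is compactly supported in $K$, and the Chern--Weil transgression
\[
\text{Tr}(F(A_1)\wedge F(A_1))-\text{Tr}(F(A_0)\wedge F(A_0)) = d\,\text{Tr}\bigl(\alpha\wedge(F(A_0)+F(A_1))-\tfrac{1}{3}\alpha\wedge\alpha\wedge\alpha\bigr)
\]
expresses the difference as an exact, compactly supported $4$-form, so Stokes' theorem equates the two integrals. For independence of $f$ and its lift $\tilde f$: any two representatives of $\phi$ are smoothly homotopic and any two compatible lifts differ by a bounded function, so for sufficiently large $r$ the compact regions associated to the different choices can be nested inside a common larger compact set; the preceding calculation then finishes this case.

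For independence of the representatives $a$ and $b$ within their classes in $\widetilde R(X)$, I would apply Lemma \ref{sumformula}. Choosing a closed connected submanifold $Y\subset X$ with $i_*[Y]=\text{PD}(\phi)$ as a regular level set of $f$, that lemma gives
\[
cs_{(X,\phi)}([a],[b]) = cs_Y(i^*a) - cs_Y(i^*b).
\]
Since $cs_Y$ descends to $\widetilde{\B}(Y)=\A(Y)/\map_0(Y,SU(2))$, it suffices to check that $i^*$ carries $\map_0(X,SU(2))$ into $\map_0(Y,SU(2))$. Reading ``mapping degree zero'' for $g:X\to SU(2)$ as the vanishing of the pullback class $g^*[\omega]\in H^3(X,\z)$ (where $\omega$ generates $H^3(SU(2),\z)$), one has
\[
\deg(i^*g) = \langle (i^*g)^*[\omega],[Y]\rangle = \langle g^*[\omega], i_*[Y]\rangle = 0,
\]
as required.

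The subtlest point is this degree-zero bookkeeping, since ``mapping degree'' is not intrinsic for maps out of a $4$-manifold; once one fixes the interpretation of $\map_0(X,SU(2))$ compatibly with restriction to $3$-dimensional Poincar\'e duals of $\phi$, every independence claim reduces to Stokes' theorem together with Lemma \ref{sumformula}.
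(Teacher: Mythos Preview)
Your proof is correct. The paper's own proof is a single sentence: ``This is also a consequence of Lemma~\ref{sumformula}.'' That is, the paper deduces all three independence statements at once from the identity $cs_{(X,\phi)}([a],[b])=cs_Y(i^*a)-cs_Y(i^*b)$: the right-hand side visibly does not involve $A_{a,b}$; it is independent of the choice of $Y$ (hence of $f$) because any two such $Y$ are cobordant in $X$ and $a,b$ are flat, so $cs_{Y'}(i'^*a)-cs_Y(i^*a)=\frac{1}{8\pi^2}\int_V Tr(F(a)\wedge F(a))=0$; and it descends to $\widetilde{R}(X)$ by exactly the degree-zero restriction argument you supply.

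Your organization is slightly different: you handle independence of $A_{a,b}$ and of $f,\tilde f$ by a direct Chern--Weil transgression and Stokes argument on $\widetilde X^\phi$, reserving Lemma~\ref{sumformula} for the representative case. This is a valid and more self-contained alternative for the first two parts, with the virtue of making explicit that the integral is over a fixed covering space with compactly supported integrand (so different $f$'s merely restrict which interpolants $A_{a,b}$ are admissible, and these classes overlap for large $r$). The paper's route is more uniform but leaves those steps implicit. Your flagging of the ambiguity in ``mapping degree zero'' for maps $X^4\to SU(2)$ is apt; under the natural reading $g^*[\omega]=0\in H^3(X,\z)$ (or the stronger reading that $g$ lies in the identity component), the restriction computation $\deg(i^*g)=\langle g^*[\omega],i_*[Y]\rangle=0$ goes through exactly as you wrote.
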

This is also a consequence of Lemma \ref{sumformula} 
\begin{defn}\upshape \label{tildeqx}Let $X$ be a closed oriented $4$-manifold equipped with $\phi \in H^1(X,\z)$. 
The invariant $\tilde{Q}_{(X,\phi)}$ is defined by
\begin{equation*}
\begin{cases}
\parbox{.9\linewidth}{%
$ \infty$     \ \ \ \ \                                          if $\widetilde{R}^*(X)= \emptyset$,
\\
$ \inf \left\{\left|cs_{(X,\phi)}([a],[\theta])+m\right| \middle| m\in \z\ , [a] \in \widetilde{R}^*(X) \right\} $ if $\widetilde{R}^*(X) \neq \emptyset$, 
}
%|cs_X([a],\theta)| \in [0,1)
\end{cases}
\end{equation*}
where $\widetilde{R}^*(X)$ is the subset of the classes of the irreducible connections in  $\widetilde{R}(X)$.
\end{defn}
%\begin{rem}\upshape
%Suppose $X$ is a homology $S^3 \times S^1$ satisfying $H_*(\widetilde{X}) \cong H_*(S^3)$ for the $\z$ covering $\widetilde{X}$ of $X$ and $\tilde{Q}_X\neq \infty$, then
%the $\inf$ in Definition \ref{tildeqx} is attained so that $\tilde{Q}(a)=cs_X(a,\theta)$ for some $a \in \widetilde{R}(X)$.
%This is a consequence of the compactness of the space
%\[
%R^*(X):=\left\{\text{irreducible flat }  SU(2)\text{ connection on }P_X \right\} /\map(X, SU(2)),
%\]
%which is shown in \cite{FO93}.
%\end{rem}
We now give a definition of ${Q}^i_X \in \R_{\geq 0} \cup \{\infty\}$.
\begin{defn}\upshape \label{defofQiX}
Suppose that $X$ is a homology $S^3\times S^1$ and $i$ is a positive integer. Let $\widetilde{X}$ be $\z$-fold covering space over $X$ corresponding to the $1\in H^1(X,\z)\cong_{PD} H_3(X,\z)$.
We set $\widetilde{X}^i:= \widetilde{X}/ i\z$. Since the quotient map $p^i:\widetilde{X}\ri \widetilde{X}^i$ is a $\z$-fold covering, this determine a class $\phi^i \in H^1(\widetilde{X}^i,\z)$.
We define $Q^l_X\in \R_{\geq 0} \cup \{\infty\}$ by 
\[
Q^l_X:=\displaystyle \min_{0\leq i \leq l}\tilde{Q}_{(\widetilde{X}^i,\phi^i)}.
\]

\end{defn}
We show the following lemma which is used in the proof of Key lemma(Lemma \ref{lem:theta}).
\begin{lem}\label{lem:cs}
Suppose that $\gamma$ is a flat connection on $W[m,n]$ satisfying the following conditions.
\begin{itemize}
\item $\gamma|_{Y^m_+} \cong \gamma|_{Y^n_-}$.
\item There exists $u \in \z$ satisfying $\left|cs_{(\overline{W[m,n]},\text{PD}[Y^m_+])}(r(\gamma),\theta)+ u\right|< Q^{n-m+1}_X$, where $r(\gamma)$ is a flat connection on $\overline{W[0,k]}$ given by pasting $\gamma$ with itself along $Y^0_+ \cup Y^k_-$.
\end{itemize}
Then $\gamma$ is gauge equivalent to $\theta$.
\end{lem}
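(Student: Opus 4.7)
The plan is to form the closed glued manifold $\overline{W[m,n]}$ together with the flat connection $r(\gamma)$, use the hypothesis to force $r(\gamma)$ to be reducible via the definition of $Q^{n-m+1}_X$, and then use the vanishing $H_1(W[m,n];\z)=0$ to conclude that $\gamma$ is actually trivial.

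First I would make the identification $\overline{W[m,n]}\cong\widetilde{X}^{n-m+1}$ carrying $\text{PD}[Y^m_+]$ to $\phi^{n-m+1}$: since $\widetilde{X}=W[-\infty,\infty]$ and the deck translation shifts the $W_i$-index by one, the quotient $\widetilde{X}/(n-m+1)\z$ is obtained from $W[m,n]$ by identifying $Y^m_+$ with $Y^n_-$, which is exactly $\overline{W[m,n]}$. Under this identification $r(\gamma)$ gives a well-defined class in $\widetilde{R}(\widetilde{X}^{n-m+1})$, and the hypothesis reads $|cs_{(\widetilde{X}^{n-m+1},\phi^{n-m+1})}(r(\gamma),\theta)+u|<Q^{n-m+1}_X$ for some $u\in\z$.

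Next, suppose for contradiction that $r(\gamma)$ is irreducible. Then $[r(\gamma)]\in\widetilde{R}^*(\widetilde{X}^{n-m+1})$, and Definitions \ref{tildeqx} and \ref{defofQiX} give
\[
|cs_{(\widetilde{X}^{n-m+1},\phi^{n-m+1})}(r(\gamma),\theta)+u|\ge\tilde{Q}_{(\widetilde{X}^{n-m+1},\phi^{n-m+1})}\ge Q^{n-m+1}_X
\]
for every $u\in\z$, contradicting the hypothesis. Hence $r(\gamma)$ is reducible, and by restriction $\gamma$ itself is reducible (a gauge transformation of $\overline{W[m,n]}$ that is trivial on $W[m,n]$ is globally trivial, so restriction embeds $\text{Stab}(r(\gamma))$ into $\text{Stab}(\gamma)$, forcing the latter to be larger than $\{\pm 1\}$).

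Finally, a reducible flat $SU(2)$-connection on the trivial bundle over $W[m,n]$ reduces to a flat $U(1)$-line bundle, classified by $H^1(W[m,n];U(1))$. Iterated Mayer--Vietoris applied to $W[m,n]=W_m\cup_Y\cdots\cup_Y W_n$, using that each $W_i$ is a homology cobordism between homology $3$-spheres (so $H_1(W_i;\z)=0$), gives $H_1(W[m,n];\z)=0$, and the divisibility of $U(1)$ in the universal coefficient theorem then yields $H^1(W[m,n];U(1))=0$. Consequently every such reducible flat connection is trivial, i.e.\ $\gamma$ is gauge equivalent to $\theta$. The main obstacle I anticipate is setting up the identification $\overline{W[m,n]}\cong\widetilde{X}^{n-m+1}$ compatibly with the Chern-Simons functional from Definition \ref{csX}, so that the hypothesis genuinely translates to a bound involving $\tilde{Q}_{(\widetilde{X}^{n-m+1},\phi^{n-m+1})}$ (up to the integer ambiguity absorbed into $u$); once that bookkeeping is in place, the dichotomy ``irreducible contradicts Chern-Simons'' versus ``reducible forces triviality by $H_1=0$'' is essentially formal.
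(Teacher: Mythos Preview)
Your proposal is correct and follows essentially the same route as the paper. Both arguments hinge on the identification $\overline{W[m,n]}\cong\widetilde X^{n-m+1}$, the fact that $H_1(W[m,n];\z)=0$ forces every reducible flat $SU(2)$-connection on $W[m,n]$ to be trivial, and the definition of $Q^{n-m+1}_X$ as an infimum over irreducible flat connections; the only difference is that the paper runs the dichotomy as ``$\gamma\neq\theta\Rightarrow\gamma$ irreducible $\Rightarrow r(\gamma)$ irreducible $\Rightarrow$ contradiction'', while you phrase the contrapositive ``$r(\gamma)$ reducible $\Rightarrow\gamma$ reducible $\Rightarrow\gamma=\theta$''.
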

\begin{proof} Suppose $\gamma$ is not  gauge equivalent to $\theta$.
The calculation 
\[
H_1(W[m,n])\cong 0
\]
 and holonomy correspondence 
\[
R(W[m,n]) \cong \hom (\pi_1(W[m,n]),SU(2))/ \text{conjugate}
\]
imply that there is no reducible $SU(2)$ connection on $W[m,n]$ except $\theta$. Therefore $\gamma$ is an irreducible connection on $W[m,n]$.
Because $\overline{W[m,n]} \ri X$ is the $(n-m+1)$-fold covering space of $X$,
\[
Q^{n-m+1}_X \leq \left|cs_{(\overline{W[m,n]},\text{PD}[Y^m_+])}(r(\gamma),\theta)+ u\right|
\]
holds by the definition of $Q^i_X$. This is a contradiction.

\end{proof}
\section{Compactness}
The compactness of the instanton moduli spaces for non-compact 4-manifolds is treated in \cite{Fl88}, \cite{Fu90},\cite{Do02} for cylindrical end case and in \cite{T87} for periodic end case. In \cite{Fu90} and \cite{T87}, they consider the instanton moduli spaces with the connections asymptotically convergent to the trivial connection on the end.  
%get the sufficient condition which estimates the difference from the trivial connection on the end by a $L^2$-norm of the curvature.
 We also follow their strategy by using $Q^{2l_Y+3}_X$ defined in the previous section. More explicitly, in this section we explain a compactness result for the instanton moduli spaces for a non-compact manifold $W[0,\infty]$ with periodic end.
%Second we show the compactness results of ASD moduli spaces with the controlled energy by $Q^{i}_X$ on the 4-manifolds which has a periodic end under the suitable assumptions. To obtain the compactness result we need the following assumption on $Y$.

\subsection{Key lemma}
Let $W_0$, $W[0,\infty]$ be the oriented Riemannian 4-manifolds introduced in the beginning of Subsection \ref{Fred}. By pasting $W_0$ with itself along its boundary $Y$ and $-Y$, we obtain a homology $S^3\times S^1$ which we denote by $X$. We consider the product $SU(2)$-bundle $P_{W[0,\infty]}$ on $W[0,\infty]$.
For $q\geq3$ and $\delta>0$, we define {\it the instanton moduli space} $M^{W[0,\infty]}_\delta$ by 
\[
M^{W[0,\infty]}_\delta := \left\{ \theta +c \in \Om^1(W[0,\infty])\otimes \su_{L^2_{q,\delta}} \middle | F^+(\theta+c)=0 \right\}/ \G,
\]
where $\G$ is the gauge group
\[
\G:=\left\{g \in \aut(P_{W[0,\infty]}) \subset \End(\mathbb{C}^2)_{L^2_{q+1,\text{loc}}} \middle | dg \in L^2_{q,\delta} \right\},
\]
and the action of $\G$ is given by the pull-backs of connections. For $f\in \Omega^i(W[0,\infty])\otimes \su$ with compact support, we define $L^2_{q,\delta}$ norm by the following formula
\[
||f||^2_{L^2_{q,\delta}}:= \sum_{0\leq j \leq q} \int_{W[0,\infty]} e^{\delta \tau}  \left|\nabla_{\theta}^j f\right|^2  d\text{vol} ,
\]
where $\nabla_{\theta}$ is the covariant derivertive with respect to the product connection. We use the periodic metric $|-|$ which is induced from the Riemannian metric $g_W$. Its completion is denoted by $\Omega^i(W[0,\infty])\otimes \su_{L^2_{q,\delta}}$.

 Our goal of this section is to show the next theorem under the above setting.
\begin{thm}\label{cptness}Under Assumption $\ref{imp}$ the following statement holds.
There exist $\delta'>0$ satisfying the following property. Suppose that $\delta$ is a non-negative number less than $\delta'$ and $\{A_n\} $ is a sequence in $M^{W[0,\infty]}_\delta$ satisfying 
\[
\displaystyle \sup_{n \in \n}||F(A_n)||^2_{L^2(W[0,\infty])} < \min\{8\pi^2,Q^{2l_Y+3}_X\}.
\]

Then for some subsequence $\{A_{n_j}\}$, a positive integer $N_0$ and some gauge transformations $\{g_j\}$ on $W[N_0,\infty]$, the sequences $\{g_j^*A_{n_j}\}$ converges to some $A_\infty$ in $L^2_{q,\delta}(W[N_0,\infty])$.
\end{thm}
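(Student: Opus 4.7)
The plan is to combine standard Uhlenbeck compactness on each segment $W_i$ with the hypothesis involving $Q^{2l_Y+3}_X$ in order to rule out nontrivial flat asymptotics at infinity, and then to bootstrap $C^\infty_{\text{loc}}$ convergence to weighted $L^2_{q,\delta}$ convergence via exponential decay near the trivial connection.

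First, I would apply Uhlenbeck's compactness theorem on each compact segment $W[0,N]$. Since $\sup_n\|F(A_n)\|^2_{L^2}<8\pi^2$ and each bubble carries energy at least $8\pi^2$, no bubbling can occur. After passing to a subsequence and applying local Coulomb gauge transformations, the $A_n$ converge in $C^\infty_{\text{loc}}$ on all of $W[0,\infty]$ (by a diagonal argument across an exhaustion) to some ASD connection $A_\infty$ in a suitable local gauge, with $\|F(A_\infty)\|^2_{L^2(W[0,\infty])}\leq \sup_n\|F(A_n)\|^2_{L^2}$.

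Second, I would analyze the behavior on the periodic end. Since the total energy of $A_\infty$ is finite, $\|F(A_\infty)\|^2_{L^2(W_i)}\to 0$ as $i\to\infty$. For $i$ sufficiently large this energy lies below Uhlenbeck's threshold on $W_i$, so $A_\infty|_{W_i}$ can be approximated by a flat connection $\gamma_i$ on $W_i$, whose boundary restrictions $\gamma_i|_{Y^i_{\pm}}$ are flat connections on $Y$. By Assumption \ref{imp} the number $l_Y$ is finite, so pigeonhole applied to $2l_Y+3$ consecutive boundaries produces indices $m<n$ with $n-m+1\leq 2l_Y+3$ and $\gamma_m|_{Y^m_+}\cong \gamma_n|_{Y^n_-}$ as gauge equivalence classes on $Y$. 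Gluing $\gamma_m,\ldots,\gamma_{n-1}$ yields a flat connection $\gamma$ on $W[m,n]$ with matching ends. The Chern-Simons quantity $cs_{(\overline{W[m,n]},\text{PD}[Y^m_+])}(r(\gamma),\theta)$ equals, modulo $\z$, the integral of $\tfrac{1}{8\pi^2}\text{Tr}(F\wedge F)$ for any extension of $r(\gamma)$, which is bounded by the energy of $A_\infty$ and hence strictly less than $Q^{2l_Y+3}_X$. Lemma \ref{lem:cs} then forces $\gamma\sim\theta$, so every $\gamma_i|_{Y^i_\pm}$ is trivial for $i$ large. Choosing $N_0$ beyond this threshold, $A_\infty|_{W[N_0,\infty]}$ is, after a further global gauge transformation on $W[N_0,\infty]$, $C^\infty$-close to $\theta$ on every $W_i$, $i\geq N_0$.

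Finally, to upgrade convergence to the weighted norm $L^2_{q,\delta}$, I would invoke the Fredholm theory of Subsection \ref{Fred}. By Theorem \ref{fred}, the operator $d^{*_{L^2_\delta}}_\theta+d^+_\theta$ is Fredholm on $W[N_0,\infty]$ for all $\delta\in(0,\delta')$ with $\delta'$ sufficiently small. Standard exponential decay arguments of the type in \cite{Do02}, Section 4, together with the acyclicity of the complex \eqref{cpx} for $|z|=e^{\delta/2}$, then show that both $g_j^* A_{n_j}-\theta$ and $A_\infty-\theta$ decay like $e^{-\delta\tau/2}$ on $W[N_0,\infty]$ uniformly in $j$. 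Pairing this pointwise decay against the weight $e^{\delta\tau}$ and using the local $C^\infty$ convergence on $W[N_0, M]$ for each finite $M$, a cutoff-and-pass-to-the-limit argument yields convergence of $g_j^*A_{n_j}$ to $A_\infty$ in $L^2_{q,\delta}(W[N_0,\infty])$.

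The hardest step is the transition from the topological information extracted via Lemma \ref{lem:cs}, namely that the flat asymptotics must be trivial, to a uniform $C^\infty$ estimate of $g_j^* A_{n_j}-\theta$ on arbitrarily far segments $W_i$; this requires controlling the Coulomb-gauge distance to $\theta$ on each tail segment simultaneously in $i$ and $j$, and then leveraging the Fredholm property at the trivial connection to convert this into genuine exponential decay in the weighted norm. The subtlety is that compactness pre-\emph{assumes} all this at the flat level and one must still bridge it quantitatively to the ASD connections $A_n$ themselves.
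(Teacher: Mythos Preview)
Your outline has the right ingredients but assembles them in an order that leaves a genuine gap: by first extracting a $C^\infty_{\text{loc}}$ limit $A_\infty$ and only then invoking the $Q^{2l_Y+3}_X$ argument on $A_\infty$, you never rule out that energy in the sequence $\{A_n\}$ escapes to infinity. Concretely, nothing you wrote prevents a scenario in which each $A_n$ carries a fixed amount of energy concentrated near $W_{k_n}$ with $k_n\to\infty$; the local limit $A_\infty$ is then flat (hence trivially passes your Step~2), yet no choice of $N_0$ makes $g_j^*A_{n_j}\to A_\infty$ in $L^2_{q,\delta}(W[N_0,\infty])$. Your claim in Step~3 that the decay of $g_j^*A_{n_j}-\theta$ is \emph{uniform in $j$} is precisely what fails here, and you yourself flag this as the ``hardest step'' without supplying an argument.

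The paper closes this gap by applying the pigeonhole/$Q^{2l_Y+3}_X$ mechanism not to the limit but to each $A_n$ separately. Lemma~\ref{lem:theta} shows that on any block $W[k,k+2]$ with small curvature, $A_n$ is gauge-close to $\theta$ (not merely to some flat connection), with constants independent of $n$. This feeds into an energy-recursion (Lemma~\ref{lem:estimate} and Proposition~\ref{prop:expdecay}) and, crucially, into Proposition~\ref{lem:type}: for a fixed small $\eta$, the positions $s^m_n$ where $\|F(A_n)\|^2_{L^2(W_s)}>\eta$ are bounded uniformly in $n$ after passing to a subsequence. The proof of Proposition~\ref{lem:type} is exactly the ``no energy escape'' statement you are missing; it argues by contradiction, using that at an intermediate segment between two far-apart concentration points the Chern--Simons value is simultaneously forced to be small (by closeness to $\theta$) and bounded below by the escaping energy. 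Only after this uniform bound on concentration positions does the paper run the exponential decay and patching (Proposition~\ref{prop:conv}) to get weighted convergence. In short, your Step~2 should be performed on the $A_n$ themselves, and you need the chain-decomposition argument of Proposition~\ref{lem:type} before any appeal to exponential decay.
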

The proof of Theorem \ref{cptness} is given in the end of Subsection \ref{c3}.

%We give a notation in our argument.
%\[
%\widetilde{R}^{\partial}(W[m,n]):=\{ [a] \in \widetilde{R}(W[m,n]) \ |\ [a|_{Y^m_-}]=[a|_{Y^n_+}] \in {R}(Y)\ \} 
%\]
%for $n<\infty$.
%\begin{lem}
%There is the map induced by the pull-backs of connections 
%\[
%r:\widetilde{R}(\overline{W[m,n])}\ri \widetilde{R}^{\partial}(W[m,n]).
%\]
%and $r$ is bijective map. 
%\end{lem}

We use the following estimate.
\begin{lem}\label{lem:fundamental}
For a positive number $c_1>0$, there exists a positive number $c_2>0$ satisfying the following statement.

For any $SU(2)$-connection $a$ on $Y^0_+$ and any flat connection $\gamma$ on $W[0,k]$ satisfying the following conditions
\begin{itemize}
\item
$\sup_{x \in Y^+_0}\sum_{0\leq j \leq 1}\left|\nabla^{(j)}_{\gamma}(a-(l^0_+)^*\gamma)(x)\right|<c_1$.
\item $\gamma|_{Y^0_+ } \cong \gamma|_{Y^k_-}$.
\end{itemize}
, the inequality
\[
\left| cs_Y([a])-cs_{(\overline{W[0,k]},\text{PD}[Y^0_+])}(r(\gamma),\theta)\right| \leq c_2 \sup_{x \in Y^+_0}\sum_{0\leq j \leq 1}\left|\nabla^{(j)}_{(l^0_+)^*\gamma}(a-(l^0_+)^*\gamma)(x)\right|^2
\]
holds, where $r(\gamma)$ is a flat connection on $\overline{W[0,k]}$ given by pasting $\gamma$ with itself along $Y^0_+ \cup Y^k_-$ and $l^0_+:Y^0_+\ri W_0$ is the inclusion.
\end{lem}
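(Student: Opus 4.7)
The plan is to reduce, via Lemma \ref{sumformula}, the claimed inequality to the standard quadratic estimate for the Chern--Simons functional around a flat connection on the closed $3$-manifold $Y$.

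First, I would apply Lemma \ref{sumformula} to the closed $4$-manifold $\overline{W[0,k]}$ with $\phi = \text{PD}[Y^0_+]$ and the pair of flat connections $r(\gamma)$ and $\theta$. Since $\theta$ restricts to the trivial connection on $Y^0_+$ (so $cs_Y(i^*\theta)=0$) and $r(\gamma)|_{Y^0_+} = (l^0_+)^*\gamma$, the lemma yields
\[
cs_{(\overline{W[0,k]},\text{PD}[Y^0_+])}(r(\gamma),\theta) = cs_Y((l^0_+)^*\gamma).
\]
Setting $b := (l^0_+)^*\gamma$ and $\alpha := a - b$, the task reduces to estimating $|cs_Y(b+\alpha) - cs_Y(b)|$. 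Importantly, $b$ is a flat connection on $Y$ because $\gamma$ is flat on $W[0,k]$, so $F(b) = 0$.

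Next, I would expand the Chern--Simons functional around $b$. A direct computation from the definition of $cs_Y$, using $F(b) = 0$ and the closedness of $Y$ so that total differentials integrate to zero, gives the standard expansion
\[
cs_Y(b+\alpha) - cs_Y(b) = \frac{1}{8\pi^2}\int_Y \text{Tr}\!\left(\alpha\wedge d_b\alpha + \tfrac{2}{3}\,\alpha\wedge\alpha\wedge\alpha\right).
\]
The linear-in-$\alpha$ term drops out, and pointwise on $Y$ the integrand is bounded by a constant (depending only on $g_Y$ and the identification of the fibre metric with $\text{Tr}$) times $|\alpha|\,|\nabla_b\alpha| + |\alpha|^3$.

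Finally, I would integrate over the compact manifold $Y$ and invoke the hypothesis. Using $|\alpha|\,|\nabla_b\alpha| \leq \tfrac{1}{2}(|\alpha|^2 + |\nabla_b\alpha|^2)$ and the a priori bound $|\alpha| \leq c_1$ to absorb the cubic term via $|\alpha|^3 \leq c_1|\alpha|^2$, the integrand is pointwise bounded by a constant multiple of $\sup_{x \in Y^0_+}\sum_{j\leq 1}|\nabla^{(j)}_{(l^0_+)^*\gamma}\alpha(x)|^2$, and integration contributes only the factor $\text{vol}(Y)$. Taking $c_2$ to be a suitable multiple of $\text{vol}(Y)(1+c_1)$ yields the claimed inequality. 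The only mildly technical point---the closest thing to an obstacle---is verifying that the tangential covariant derivatives along $Y^0_+$ taken with respect to the $4$-dimensional connection $\gamma$ are comparable (in fact equal when restricted to forms tangent to $Y^0_+$) to those taken with respect to $(l^0_+)^*\gamma$, so that the hypothesis truly controls the quantity appearing in the conclusion.
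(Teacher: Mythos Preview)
Your proposal is correct and follows essentially the same route as the paper: reduce via Lemma~\ref{sumformula} to $|cs_Y(a)-cs_Y((l^0_+)^*\gamma)|$, expand the Chern--Simons functional around the flat connection $(l^0_+)^*\gamma$ to get the quadratic-plus-cubic integral, and then bound pointwise using $\mathrm{vol}(Y)$ and the a priori bound $c_1$ to absorb the cubic term. Your remark about the tangential covariant derivatives is well taken; the paper simply writes the bound directly without commenting on it.
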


\begin{proof}
Lemma \ref{sumformula} imply
\[
\left|  cs_Y([a])-cs_{(\overline{W[0,k]},\text{PD}[Y^0_+])}(\gamma,\theta)\right|
=|cs_Y(a)-cs_Y((l^0_+)^*\gamma)|.
\]
Since $(l^0_+)^*\gamma$ is a flat connection on $Y^0_+$, we have
\[
=\frac{1}{8\pi^2}\left| \int_{Y^0_+}Tr( (a-(l^0_+)^*\gamma) \wedge d_{(l^0_+)^*\gamma}(a-(l^0_+)^*\gamma) + \frac{2}{3}(a-(l^0_+)^*\gamma) ^3\right|
\]
\[
\leq \frac{1}{8\pi^2}\text{vol}_Y (\sup_{x \in Y^0_+}\left|\nabla_{(l^0_+)^*\gamma}(a-(l^0_+)^*\gamma) (x)\right||(a-(l^0_+)^*\gamma)|+ \frac{2}{3}\sup_{x \in Y^0_+}|(a-(l^0_+)^*\gamma)|^3)
\]
\[
\leq c_2 \sup_{x \in Y^0_+}\sum_{0\leq j \leq 1}|\nabla^{(j)}_{(l^0_+)^*\gamma}(a-(l^0_+)^*\gamma)(x)|^2.
\]

\qed
\end{proof}

Next lemma gives us a key estimate. We use $Q^{2l_Y+3}_X$ to obtain an estimate of the difference between an ASD-connection and the trivial flat connection on the end $W[0,\infty]$.
\begin{lem}\label{lem:theta}Suppose that $Y$ satisfies Assumption \ref{imp}. There exists a positive number $c_3$ satisfying the following statement.

For $A \in M^{W[0,\infty]}_{\delta}$ satisfying $\frac{1}{8\pi^2}||F(A)||^2_{L^2(W[0,\infty]}< \min \{1, Q^{2l_Y+3}_X\}$, there exists a positive number $\eta_0$ which depends only on the difference $\min \{1, Q^{2l_Y+3}_X\}-\frac{1}{8\pi^2}||F(A)||^2_{L^2}$ such that  the following condition holds.

Note that if $K$ is sufficiently large, the inequality $||F(A)||^2_{L^2(W_k)}< \eta_0$ is satisfied for every $k>K$. When $K$ satisfies this property, there exist gauge transformations $g_k$ over $W[k,k+2]$ such that 
\[
\sup_{x\in W[k,k+2]}\sum_{0\leq j\leq q+1}|{\nabla^{j}}_{\theta}({g_k}^*A|_{W[k,k+2]}-\theta)(x)|^2
\]
\[
\leq c_3{||F(A)||^2}_{L^2(W[k-l_Y-2,k+l_Y+3])}
\]
holds for $k>K+l_Y+3$.
\end{lem}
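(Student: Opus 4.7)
The plan is to put $A$ into Uhlenbeck's small-energy Coulomb gauge on each unit segment of the end, identify the limiting flat model by a pigeonhole argument in $R(Y)$, and rule out any non-trivial flat model by invoking Lemma~\ref{lem:cs} with the hypothesis $\tfrac1{8\pi^2}\|F(A)\|^2_{L^2}<\min\{1,Q^{2l_Y+3}_X\}$.

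First I would fix $\eta_0>0$ small enough that Uhlenbeck's small-energy gauge-fixing theorem, combined with elliptic bootstrapping from the (unperturbed, since $s\equiv 0$ on $W[0,\infty]$) ASD equation $F^+(A)=0$, produces for each index $j$ with $\|F(A)\|^2_{L^2(W[j-1,j+1])}<\eta_0$ a gauge transformation $h_j$ on a neighborhood of $W_j$ and a flat $SU(2)$-connection $\gamma_j$ there satisfying
\[
\sup_{x\in W_j}\sum_{i\leq q+1}\bigl|\nabla^{i}_{\gamma_j}(h_j^*A-\gamma_j)(x)\bigr|^2 \leq C\,\|F(A)\|^2_{L^2(W[j-1,j+1])}.
\]
Assumption~\ref{imp} makes $R(Y)$ a finite set of $l_Y$ isolated points, so by shrinking $\eta_0$ further I may assume that the classes $[\gamma_j|_{Y^j_\pm}]\in R(\pm Y)$ are uniquely determined and that consecutive classes agree, since $\gamma_j|_{Y^j_-}$ and $\gamma_{j+1}|_{Y^{j+1}_+}$ both $C^{q+1}$-approximate $A$ on the common slice.

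Next I would pigeonhole inside the window $W[k-l_Y-2,k+l_Y+3]$ of $2l_Y+6$ segments: since $j\mapsto[\gamma_j|_{Y^j_+}]$ takes values in a set of size $l_Y$, I can select $j_1\leq k-1$ and $j_2\geq k+3$ inside the window with $j_2-j_1\leq 2l_Y+3$ and $[\gamma_{j_1}|_{Y^{j_1}_+}]\cong[\gamma_{j_2-1}|_{Y^{j_2-1}_-}]$. Pasting $\gamma_{j_1},\dots,\gamma_{j_2-1}$ via the consecutive-matching property yields a flat connection $\gamma$ on $W[j_1,j_2-1]$ with $\gamma|_{Y^{j_1}_+}\cong\gamma|_{Y^{j_2-1}_-}$, and closing up along these matched boundaries produces a flat connection $r(\gamma)$ on the $(j_2-j_1)$-fold cyclic cover of $X$.

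Third, I would estimate the Chern--Simons value of $r(\gamma)$. By Lemma~\ref{sumformula} together with the ASD identity $cs_Y(A|_{Y^{j_1}_+})-cs_Y(A|_{Y^{j_2-1}_-})\equiv -\tfrac1{8\pi^2}\|F(A)\|^2_{L^2(W[j_1,j_2-1])}\pmod{\z}$, and applying Lemma~\ref{lem:fundamental} at the two matched slices to compare the restrictions of $A$ with those of $\gamma$, I obtain some $u\in\z$ with
\[
\bigl|cs_{(\overline{W[j_1,j_2-1]},\mathrm{PD}[Y^{j_1}_+])}(r(\gamma),\theta)+u\bigr| \leq \tfrac1{8\pi^2}\|F(A)\|^2_{L^2(W[0,\infty])}+C'\eta_0.
\]
Because $Q^{2l_Y+3}_X\leq Q^{j_2-j_1}_X$ by monotonicity in the index, and the hypothesis gives a strict gap $\min\{1,Q^{2l_Y+3}_X\}-\tfrac1{8\pi^2}\|F(A)\|^2_{L^2}>0$, I will choose $\eta_0$ as a function of this gap so that the right-hand side falls strictly below $Q^{j_2-j_1}_X$. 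Lemma~\ref{lem:cs} then forces $\gamma$, and in particular each $\gamma_j$ for $j\in[k,k+2]$, to be gauge-equivalent to $\theta$; the pointwise bound of Step~1 with $\gamma_j$ replaced by $\theta$ delivers the statement with $c_3=C$.

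The hard part will be the final Chern--Simons comparison: carefully accounting for the pointwise discrepancies between $A$ and the approximating flats $\gamma_{j_1},\gamma_{j_2-1}$ through Lemma~\ref{lem:fundamental}, and showing that the combined error remains strictly below the gap $\min\{1,Q^{2l_Y+3}_X\}-\tfrac1{8\pi^2}\|F(A)\|^2_{L^2}$. This is precisely what dictates the quantitative dependence of $\eta_0$ on this gap.
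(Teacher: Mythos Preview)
Your overall strategy---put $A$ in small-energy Coulomb gauge close to a flat model, use a pigeonhole in $R(Y)$ to close up the model, bound the Chern--Simons value of the closed-up flat connection via Lemma~\ref{lem:fundamental} and the energy hypothesis, and then invoke Lemma~\ref{lem:cs}---is exactly the paper's. The differences are in implementation, and one of them is a genuine gap.

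\textbf{The pigeonhole step is not right as stated.} You claim you can find $j_1\le k-1$ and $j_2\ge k+3$ with $[\gamma_{j_1}|_{Y^{j_1}_+}]\cong[\gamma_{j_2-1}|_{Y^{j_2-1}_-}]$ from the single observation that the map $j\mapsto[\gamma_j|_{Y^j_+}]$ takes at most $l_Y$ values. But a single pigeonhole does not force a repetition to straddle the central block $W[k,k+2]$: nothing prevents the slice-classes from all equalling one value $\rho_1$ on $\{j\le k-1\}$ and a different value $\rho_2$ on $\{j\ge k\}$. The paper handles this by running \emph{two} pigeonholes, one on the $l_Y+1$ slices to the left of $W[k,k+2]$ and one on the $l_Y+1$ slices to the right, obtaining two small sub-blocks on which Lemma~\ref{lem:cs} already shows $\gamma\cong\theta$. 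Only then do the inner edges of these two sub-blocks have trivial restriction, so the middle block containing $W[k,k+2]$ has matching (trivial) boundary data, and a third application of Lemma~\ref{lem:cs} finishes. Your one-step version skips this and would fail in the scenario above.

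\textbf{A second, smaller point.} The paper does not do unit-by-unit Uhlenbeck gauge fixing and pasting; it invokes Taubes's Lemma~10.4 from \cite{T87} once on the full window $W[k-l_Y-1,k+l_Y+2]$ to obtain a \emph{single} flat connection $\gamma_k$ and a \emph{single} gauge transformation $g_k$ there. This sidesteps two issues you gloss over: (i) gluing the separate $\gamma_j$'s into a global flat $\gamma$ on $W[j_1,j_2-1]$ while retaining the $C^{q+1}$ closeness to $A$ after all the intermediate gauge adjustments, and (ii) producing the single $g_k$ on $W[k,k+2]$ required by the statement from your three separate $h_k,h_{k+1},h_{k+2}$. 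Both can be repaired (the second is essentially the content of the later patching Lemma~\ref{lem:patchingarg}), but as written they are loose ends.
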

\begin{proof}
For $k>K+l_Y+3$, we apply Lemma $10.4$ in \cite{T87} to $A|_{{W[k-l_Y-1,k+l_Y+2]}}$. 
Then we obtain gauge transformations $g_k$ and flat connections $\gamma_k$ over $W[k-l_Y-1,k+l_Y+2]$ satisfying
\[
\sup_{x \in W[k-l_Y-1,k+l_Y+2]}\sum_{0\leq j\leq q+1}|{\nabla^{j}}_{\gamma}({g_k}^*A|_{W[k-l_Y-1,k+l_Y+2]}-\gamma_k)(x)|^2
\]
\begin{align}\label{glo}
\leq c_3{||F(A)||^2}_{L^2(W[k-l_Y-2,k+l_Y+3])}\leq (2l+5)c_3\eta
\end{align}
for a small $\eta$.
By using the pull-backs of $\gamma_k$ from $W[k-l_Y-1,k-1]$(resp. $W[k+2,k+l_Y+2]$) to $Y^i_+$, we get the flat connections over $Y$. Then we get $l_Y+1$ flat connections by using this method. Under the assumption that $l_Y=\# R(Y)$, same flat connections appear by the pigeonhole principle. We choose two numbers  $k(1)^\pm<k(2)^\pm$ which satisfy ${(l^{k(1)^\pm}_+)}^*\gamma_k \cong {(l^{k(2)^\pm}_+)}^*\gamma_k$ as connections on $Y$, where $k(1)^+$ and $k(2)^+$ are elements in $\{k-l_Y-1,\cdots , k-1\}$ (resp. $k(1)^-,k(2)^- \in \{k+2,k+l_Y+2\}$). The map $l^{k}_\pm:Y^k_\pm \ri W_k$ is the inclusion.

\begin{claim}
Suppose $||F(A)||^2_{L^2(W_k)}<\eta$ holds for $k>K+l_Y+3$. For sufficiently small $\eta$, the flat connection $\gamma_k$ is isomorphic to  $\theta$.
\end{claim}
The properties of $k(1)^\pm$ and $k(2)^\pm$, \eqref{glo} and Lemma $\ref{lem:fundamental}$ imply
\begin{align}\label{1}
\left| cs_Y((l^{k(1)^\pm}_+)^* {g_k}^*A) -cs_{(\overline{(W[k(1)^\pm,k(2)^\pm]},\text{PD}[Y^{k(1)^\pm}_+])}(r(\gamma_k),\theta)\right|\leq (2l+5)c_3\eta c_2.
\end{align}
We also have
\[
 cs_Y((l^{k(1)^\pm}_+)^* {g_k}^*A)= cs_Y(l^{k(1)^\pm}_+)^*A)+\text{deg}(g_k|_{Y^{k(1)^\pm}_+})
 \]
 \begin{align}\label{3}
 = ||F(A)||^2_{L^2(W[k(1)^\pm,\infty])}+\text{deg}(g_k|_{Y^{k(1)^\pm}_+}).
 \end{align}
 
We choose $\eta_0$ satisfying the following condition: 
\begin{align}\label{2}
(2l+5)c_3\eta_0 c_2 <  Q^{2l_Y+3}_X - \frac{1}{8\pi^2}\int_{W[k(1)^\pm,\infty]}|F(A)|^2,
\end{align}
where the right hand side is positive by the assumption of $A$.
We obtain 
\[
\left|cs_{(\overline{W[k(1)^\pm,k(2)^\pm]}, \text{PD}[Y^{k(1)^\pm}])}(r(\gamma_k),\theta)+\text{deg}(g_k|_{Y^{k(1)^\pm}_+})\right|< Q^{2l_Y+3}_X
\]
 from \eqref{1}, \eqref{3} and \eqref{2}. Then Lemma \ref{lem:cs} imply $\gamma|_{W[k(1)^\pm,k(2)^\pm]}\cong \theta$.

Similarly the inequality
\[
\sup_{x \in W[k(2)^-,k(1)^+]}\sum_{0\leq j\leq q}|{\nabla^{j}}_{\gamma_k}({g_k}^*A|_{\widetilde{W}}-\gamma_k|_{W[k(2)^-,k(1)^+]})(x)|\leq (2l_Y+5)c_3\eta_0
\]
holds over $W[k(2)^-,k(1)^+]$.
From above discussion, ${l^{k(2)^-}_+}^*\gamma_k$ and ${i^{k(1)^+}_+}^*\gamma_k$ are gauge equivalent.
By Lemma \ref{lem:fundamental}, we also get 
\[
\left| cs_Y((l^{k(2)^-}_+)^*g_k^*A)-cs_{(\overline{W[k(2)^-,k(1)^+]}, \text{PD}[Y^{k(2)^-}_+])}(r(\gamma_k),\theta)\right|\leq (2l_Y+5)c_3\eta_0c_2.
\]
By choice of $\eta_0$, we have 
\[
\left|cs_{(\overline{W[k(2)^-,k(1)^+]}, \text{PD}[Y^{k(2)^-}_+])}(r(\gamma_k),\theta)+ \text{deg}(g_k|_{Y^{k(2)^-}_+})\right|< Q^{2l_Y+3}_X
\]
and get $\gamma_k|_{\overline{W[k(2)^-,k(1)^+]}}\cong \theta$ by using Lemma \ref{lem:cs} as above.
\qed
\end{proof}

\subsection{Chain convergence}
We introduce the following notion which is crucial for our proof of the compactness theorem(Theorem \ref{cptness}).
\begin{defn} \upshape [Chain decomposition of a sequence of the instantons]
For a fixed number $\eta>0$ and a sequence $\{A_n\}\subset M^{W[0,\infty]}_{\delta}$ satisfying 
\[
\displaystyle \sup_{n\in \n}||F(A_n)||^2_{L^2(W[0,\infty])}<\infty,
\]
when a finitely many sequences $\{s^j_n(\eta)\}_{1\leq j \leq m}$ of non-negative numbers satisfies
\[ 
{||F(A_n)||^2}_{L^2(W_{s})}>\eta\ \iff \ s=s^j_n \text{ for some }j,
\]
and
\[
s^1_n < \dots < s^m_n\ ,
\]
we call $\{s^j_n\}_{1\leq j \leq m}$ {\it chain decomposition} of $\{A_n\}\subset M^{W[0,\infty]}_{\delta}$ for $\eta$. 
\end{defn}
\begin{rem}\upshape
For any sequence $\{A_n\}$ satisfying $\sup_{n\in \n}||F(A_n)||^2_{L^2(W[0,\infty])}<\infty$ and any $\eta>0$, we can show the existence of a chain decomposition for $\eta>0$ if we take a subsequence of $\{A_n\}$.
\end{rem}
First we give the next technical lemma.
\begin{lem}\label{idhom}
Let $A$ be a $L^2_q$ ASD-connection on $W[k,\infty]$ satisfying 
\[
\displaystyle \frac{1}{8\pi^2} ||F(A)||^2_{L^2(W[k,\infty])}<1.
\]
Then there exists a positive number $c_4$ which depends only on the difference $1-\frac{1}{8\pi^2} ||F(A)||^2_{L^2(W[k,\infty])}$  such that the following statement holds.

Suppose there exists a gauge transformation $g$ on $Y^k_+$ satisfying 
\[
\sum_{0\leq j\leq 1}\sup_{x \in Y^k_+}   \left| \nabla^j_\theta (g^*(l^k_+)^*A-\theta)(x) \right|^2 \leq c_4.
\]
Then $g$ is homotopic to the identity gauge transformation.
\end{lem}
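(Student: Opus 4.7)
\emph{Plan.} Since $Y$ is a homology $3$-sphere, smooth maps $Y\to SU(2)$ are classified up to homotopy by their mapping degree, so it suffices to show $\deg(g)=0$. The core tool is the classical transformation law
\[
cs_Y(g^*a)-cs_Y(a)=\deg(g),
\]
valid for any $SU(2)$-connection $a$ on $Y$; this identity is precisely what underlies the descent of $cs_Y$ to $\widetilde{\B}(Y)$ in Section \ref{main}. I would apply it with $a=(l^k_+)^*A$ and then bound both Chern--Simons values on the right-hand side of
\[
\deg(g)=cs_Y(g^*(l^k_+)^*A)-cs_Y((l^k_+)^*A)
\]
tightly enough to force $|\deg(g)|<1$, so that the integer $\deg(g)$ must vanish.

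\emph{Step 1: estimate $cs_Y((l^k_+)^*A)$.} For each $N>k$, Stokes's theorem applied to the Chern--Weil form $\tfrac{1}{8\pi^2}\mathrm{Tr}(F(A)\wedge F(A))$ on the compact piece $W[k,N]$, combined with the ASD equation, yields
\[
cs_Y((l^k_+)^*A)-cs_Y((l^N_-)^*A)=\tfrac{1}{8\pi^2}\|F(A)\|^2_{L^2(W[k,N])}.
\]
The global $L^2_q$ hypothesis on $A-\theta$ implies $\|A-\theta\|_{L^2_q(W_N)}\to 0$ as $N\to\infty$; by the trace theorem on the boundary $3$-manifold $Y^N_-$ together with the uniform Sobolev estimate available on the bounded-geometry periodic end, this forces $(l^N_-)^*A\to\theta$ in a Sobolev topology strong enough to make $cs_Y$ continuous, so $cs_Y((l^N_-)^*A)\to 0$. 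Passing to the limit gives
\[
cs_Y((l^k_+)^*A)=\tfrac{1}{8\pi^2}\|F(A)\|^2_{L^2(W[k,\infty])},
\]
which is strictly less than $1$ by hypothesis.

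\emph{Step 2: estimate $cs_Y(g^*(l^k_+)^*A)$ and conclude.} Writing $c:=g^*(l^k_+)^*A-\theta$ and using that $\theta$ is the zero $1$-form in the product trivialisation,
\[
cs_Y(g^*(l^k_+)^*A)=\tfrac{1}{8\pi^2}\int_Y\mathrm{Tr}\bigl(c\wedge dc+\tfrac{2}{3}c\wedge c\wedge c\bigr),
\]
which is bounded by a constant depending only on $(Y,g_Y)$ times $\sup_Y|c||\nabla_\theta c|+\sup_Y|c|^3$, hence by $C'c_4$ once $c_4\leq 1$, by the pointwise hypothesis on $g$. Combining with Step 1,
\[
|\deg(g)|\leq C'c_4+\tfrac{1}{8\pi^2}\|F(A)\|^2_{L^2(W[k,\infty])}.
\]
Choosing $c_4<(C')^{-1}\bigl(1-\tfrac{1}{8\pi^2}\|F(A)\|^2_{L^2(W[k,\infty])}\bigr)$---a threshold depending only on the positive gap $1-\tfrac{1}{8\pi^2}\|F(A)\|^2_{L^2(W[k,\infty])}$, as required---makes the right-hand side strictly less than $1$, forcing the integer $\deg(g)$ to be zero. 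The main technical obstacle is justifying the convergence $(l^N_-)^*A\to\theta$ in Step 1, which requires uniform trace and Sobolev constants on the end $W[k,\infty]$; these are available because the periodic Riemannian metric and the product bundle have bounded geometry, so the argument is standard.
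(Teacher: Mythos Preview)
Your proof is correct and follows essentially the same route as the paper: both use the transformation law $\deg(g)=cs_Y(g^*(l^k_+)^*A)-cs_Y((l^k_+)^*A)$, bound the first term via the explicit Chern--Simons integrand and the pointwise hypothesis, identify the second term with $\tfrac{1}{8\pi^2}\|F(A)\|^2_{L^2(W[k,\infty])}$ via Stokes and the ASD condition, and then choose $c_4$ so that the sum is strictly less than $1$. Your write-up is in fact more careful than the paper's, which suppresses the Step~1 limiting argument and the Sobolev/trace justification you flag.
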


\begin{proof}
By the property of $cs_Y$, we have
\[
|\text{deg}(g)|= |cs_Y(g^*(l^k_+)^*A)-cs_Y((l^k_+)^*A)|
\]
\[
\leq |cs_Y(g^*(l^k_+)^*A)|+||F(A)||^2_{L^2(W[k,\infty])}
\]
\[
\leq  \sum_{0\leq j\leq 1}\sup_{x \in Y^k_+}   \left| \nabla^j_\theta (g^*(l^k_+)^*A-\theta)(x)\right|^2+||F(A)||^2_{L^2(W[k,\infty])}.
\]
We define $c_4< \frac{1}{2}(1-||F(A)||^2_{L^2(W[k,\infty])})$, then 
\[
|\text{deg}(g)| < 1
\]
holds. This implies the conclusion.

\end{proof}

The next proposition is a consequence of Lemma \ref{lem:theta} and \ref{idhom}.
\begin{prop}\label{lem:type}
Let $\eta$ be a positive number and $\{s^j_n\}_{1\leq j \leq m}$ be a chain decomposition for $\eta$ of a sequences $\{A_n\}$ in $M^{W[0,\infty]}_{\delta}$ with 
\[
\displaystyle \frac{1}{8\pi^2}\sup_{n \in \n}||F(A_n)||^2_{L^2(W[0,\infty])} <\min \{1,Q^{2l_Y+3}_X\}.
\]
Then there exists a subsequence $\{A_{n_i}\}$ of $\{A_n\}$ such that $\sup_{i\in \n}  |s^m_{n_i}|<\infty$ holds.

\end{prop}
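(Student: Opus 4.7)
The plan is to argue by contradiction. If no subsequence of $\{A_n\}$ has bounded $\{s^m_{n}\}$, then after extraction I may assume $s^m_n \to \infty$, and by a further diagonal extraction that for each $j < m$ either $s^j_n$ stays bounded or $s^j_n - s^{j-1}_n \to \infty$ (with $s^0_n := 0$). Refining the chain decomposition with a smaller threshold if needed, I may assume $\eta$ is below the constant $\eta_0$ from Lemma \ref{lem:theta} (governed by the strict energy gap $\tfrac{1}{1}(\min\{1, Q^{2l_Y+3}_X\} - \tfrac{1}{8\pi^2}\sup_n \|F(A_n)\|^2)$) and below Uhlenbeck's threshold. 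Consider the translated ASD connections $C_n := (T^{s^m_n})^* A_n$ on $W[-s^m_n, \infty) \subset \widetilde X$; then $\|F(C_n)\|^2_{L^2(W_0)} > \eta$ while $\|F(C_n)\|^2_{L^2(W_k)} < \eta$ for every $k \ge 1$ and, by the diagonal extraction, for each fixed $k \le -1$ once $n$ is large. The total energy bound is preserved under translation.

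Next I extract an Uhlenbeck-type limit on $\widetilde X$. Applying Lemma \ref{lem:theta} to $C_n$ on the forward end yields gauge transformations $g^+_{n,k}$ on slabs $W[k, k+2]$, $k \ge l_Y + 4$, making $g^{+*}_{n,k} C_n$ smoothly $C^{q+1}$-close to $\theta$; a symmetric application on the backward end gives $g^-_{n,k}$. Standard Uhlenbeck gauge-fixing handles the bounded core, and patching these slice gauges across overlaps by correcting the constant $SU(2)$-ambiguity in the matching on $W[k,k+1]\cap W[k+1,k+2]$ produces, after a further subsequence, a global gauge $h_n$ on $\widetilde X$ with $h_n^* C_n \to C_\infty$ smoothly on compact subsets. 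The limit $C_\infty$ is an ASD connection on $\widetilde X$ satisfying $\|F(C_\infty)\|^2_{L^2(W_0)} \ge \eta$, total energy $< 8\pi^2 \min\{1, Q^{2l_Y+3}_X\}$, and (by passing Lemma \ref{lem:theta} to the limit) asymptotic to $\theta$ on both ends in suitable gauges.

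To finish, I derive a contradiction from integrality of $c_2$. For $k \gg l_Y + 3$, the $C^{q+1}$-closeness of $C_\infty$ to $\theta$ on $W[\pm k, \pm(k+2)]$ lets me close up $C_\infty|_{W[-k, k]}$ along $Y^{-k}_+ \cong Y^k_-$ via a cut-off to $\theta$ near the gluing slice, yielding a smooth $SU(2)$ connection $B_k$ on the cyclic cover $\widetilde X^{2k+1}$. Since $\widetilde X^{2k+1}$ is closed, $c_2(B_k) \in \z$; on the other hand the cut-off error gives $-c_2(B_k) = \tfrac{1}{8\pi^2}\|F(C_\infty|_{W[-k, k]})\|^2 + O(\epsilon_k)$ with $\epsilon_k \to 0$, so for $k$ sufficiently large $-c_2(B_k)$ is an integer lying strictly between $0$ and $\min\{1, Q^{2l_Y+3}_X\} \le 1$, contradicting integrality. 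The main obstacle I anticipate is the middle step---assembling the Uhlenbeck and Lemma \ref{lem:theta} slice gauges into a coherent global gauge $h_n$ on $\widetilde X$ whose limit decays to $\theta$ at both ends---and secondarily the uniform control of the cut-off error $\epsilon_k$ in the final step, which must be driven below $1 - \tfrac{1}{8\pi^2}\sup_n \|F(A_n)\|^2$ before the pigeonhole on $\z \cap (0,1) = \emptyset$ applies.
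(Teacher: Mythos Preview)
Your route differs from the paper's and is considerably more elaborate. One slip first: the dichotomy ``either $s^j_n$ bounded or $s^j_n - s^{j-1}_n \to \infty$'' is false as stated (take $s^{m-1}_n = s^m_n - 5$ for all $n$); after a subsequence each \emph{gap} is bounded or tends to $\infty$, which groups indices into clusters, and translating by $s^m_n$ then gives smallness of the curvature only outside a fixed bounded window, not for every $k \le -1$. This is repairable. The main point, however, is that the paper avoids the limit extraction, the gauge patching on $\widetilde X$, and the close-up entirely. It works directly with $A_n$: refine to a chain decomposition $\{t^j_n\}$ for a much smaller threshold $\eta$, locate a slab $W[u_n, u_n+2]$ sitting in a long gap where Lemma~\ref{lem:theta} applies, and compute $cs_Y$ at the single slice $Y^{u_n}_+$. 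Lemma~\ref{idhom} forces the gauge there to have degree $0$, so $|cs_Y((l^{u_n}_+)^* A_n)|$ is $O(\eta)$ via Lemma~\ref{lem:fundamental}; but by Stokes and the ASD equation this quantity equals $\tfrac{1}{8\pi^2}\|F(A_n)\|^2_{L^2(W[u_n,\infty])}$, which is bounded below independently of $\eta$ because an original chain index for the threshold $\eta_0$ lies beyond $u_n$. Choosing $\eta$ small yields the contradiction in one line. Your $c_2$-integrality argument on the closed-up limit is essentially a repackaging of this same Chern--Simons/Stokes identity, but it routes you through exactly the two obstacles you yourself flag (and the degree-$0$ issue for the end gauges reappears there anyway); the paper's direct slice computation sidesteps both.
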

\begin{proof}
Suppose there exists $\eta_0>0$ which does not satisfy the condition of Proposition\ $\ref{lem:type}$. There exist a chain decomposition $\{s^j_n(\eta_0) \}_{0\leq j\leq m}$ of $\{A_n\}$ which satisfies $s^m_{n} \ri \infty$ as $n \ri \infty$. We take a sufficiently small $\eta>0$, we will specify $\eta$ later. Choose a subsequence of $\{A_n\}$ which allows a chain decomposition for $\eta$. For simplify, we denote the subsequence by same notation $\{A_n\}$. We denote the chain decomposition of $\{A_n\}$ for $\eta$ by  $\{t^{j}_n(\eta)\}_{1\leq j \leq m'}$.
There are two cases for $\{t^{j}_n(\eta)\}_{1\leq j \leq m'}$:
\begin{itemize}
\item There exists $j'' \in \{0,\cdots, m'-1\}$ satisfying $t^{j''}_n - t^{j''+1}_n\ri -\infty$.
\item There is no $j'' \in \{0,\cdots, m'-1\}$ satisfying $t^{j''}_n - t^{j''+1}_n\ri -\infty$.
\end{itemize}
We define a sequence by 
\[
u_n(\eta):=\left\lfloor \frac{t^{j''}_n(\eta)+t^{j''+1}_n(\eta)}{2} \right\rfloor \in \n
\]
for the first case and by
\[
u_n(\eta):=0
\]
for the second case, where $\lfloor - \rfloor$ is the floor function.
Applying Lemma \ref{lem:theta} to $A_n|_{W[u_n(\eta),u_n(\eta)+2]}$, we get the gauge transformation $g_n$ on $W[u_n(\eta)-l_Y-2,u_n(\eta)+l_Y+3]$ satisfying 
\begin{align}\label{yyy}
\sup_{x \in W[u_n(\eta),u_n(\eta)+2]}\sum_{1\leq j \leq q+1}|\nabla^j_{\theta}({g_n}^*{A_n}|_{W[u_n,u_n(\eta)+2]} - \theta)|^2\leq c_3(2l_Y+5)\eta,
\end{align}
for small $\eta$ and large $n$. 
 Because $A_n$ is the ASD-connection for each $n$, we have
\[
\frac{1}{8\pi^2}\int_{W[u_n,\infty]}Tr(F(A_n)\wedge F(A_n))=\frac{1}{8\pi^2}\int_{W[u_n,\infty]}|F(A_n)|^2>\eta_0
\]
for large $n$.
On the other hand, by the Stokes theorem
\[
\frac{1}{8\pi^2}\int_{W[u_n,\infty]}Tr(F(A_n)\wedge F(A_n))=cs_Y({l^{u_n}_+}^*A_n)
\]
holds. 
By Lemma \ref{idhom}, we have 
\[
cs_Y({l^{u_n}_+}^*A_n)=cs_Y({l^{u_n}_+}^*g_n^* A_n)
\]
for small $\eta$. Therefore \eqref{yyy} and Lemma \ref{lem:fundamental} gives 
\[
|cs_Y({l^{u_n}_+}^*A_n)|\leq c'c_3(2l_Y+5)\eta.
\]
We choose $\eta$ satisfying
\[
c'c_3(2l_Y+5)\eta<\frac{1}{8\pi^2}\eta_0.
\]
For such $\eta$, we have \[
|cs_Y({l^{u_n}_+}^*A_n)|<\frac{1}{8\pi^2}\eta_0.
\]
On the other hand, $\eta_0$ satisfies \[
\frac{1}{8\pi^2}\eta_0 <\frac{1}{8\pi^2}\int_{W[u_n,\infty]}|F(A_n)|^2=|cs_Y({l^{u_n}_+}^*A_n)|
\] which is a contradiction.
\qed
\end{proof}

\subsection{Exponential decay}\label{c3}
In the instanton Floer theory, there is an estimate called exponential decay about the $L^2$-norm of curvature of the instanton over cylindrical end.  We give a generalization of the exponential decay estimate over $W[0,\infty]$. In the end of this subsection, we also give a proof of Theorem \ref{cptness}.

\begin{lem}\label{lem:estimate}
There exists a constant $c_5$ satisfying the following statement.
For $A \in M^{W[0,\infty]}_{\delta}$ satisfying $\frac{1}{8\pi^2}||F(A)||^2<\min \{1,Q^{2l_Y+3}_X\}$, there exists $\eta_1>0$  which depends only on the difference  $\min\{ Q^{2l_Y+3}_X,1\} -\frac{1}{8\pi^2}||F(A)||^2$ such that the following condition holds.

Let $K>0$ be a positive number satisfying $||F(A)||^2_{L^2(W_k)}< \eta_1$ for any $k>K$, the inequality 
\begin{align}\label{ooo}
||F(A)||^2_{L^2(W[k,k+m])} 
\end{align}
\[
\leq c_3( ||F(A)||^2_{L^2(W[k-l_Y-2,k+l_Y+3])}+||F(A)||^2_{L^2(W[k+m-l_Y-2,k+m+l_Y+3]})
\]
holds for $k>K+l_Y+3$.
\end{lem}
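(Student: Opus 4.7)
The plan is to estimate the energy $||F(A)||^2_{L^2(W[k,k+m])}$ via Chern--Weil and the Stokes theorem, which rewrites it as a difference of $cs_Y$ values at the two boundary slices $Y^k_+$ and $Y^{k+m}_-$ of the cobordism, and then to bound each boundary contribution by combining Lemma \ref{lem:theta} with Lemma \ref{lem:fundamental} applied for the trivial flat connection $\gamma=\theta$.

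First I would choose $\eta_1\in(0,\eta_0]$ small enough that for every $k>K+l_Y+3$ the pointwise output of Lemma \ref{lem:theta}, which is bounded above by $(2l_Y+6)c_3\eta_1$, stays below the constant $c_4$ appearing in Lemma \ref{idhom}; the standing assumption $\frac{1}{8\pi^2}||F(A)||^2<1$ supplies the $L^2$-tail budget required by that lemma. With this calibration, any gauge transformation delivered below by Lemma \ref{lem:theta} restricts to a degree-zero map on the adjacent 3-dimensional slices.

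Next, for $k>K+l_Y+3$, I would apply Lemma \ref{lem:theta} at index $k$ to obtain $g_k$ over $W[k,k+2]$ with the stated pointwise bound. Restricting to the boundary slice $Y^k_+\subset W[k,k+2]$ and invoking Lemma \ref{lem:fundamental} with $\gamma=\theta$ (so that the reference term $cs_{(X,\phi)}(\theta,\theta)=0$ drops out) yields
\[
|cs_Y((l^k_+)^*g_k^*A)|\le c_2 c_3||F(A)||^2_{L^2(W[k-l_Y-2,k+l_Y+3])}.
\]
By the calibration of $\eta_1$, Lemma \ref{idhom} forces $\text{deg}(g_k|_{Y^k_+})=0$, so the same bound holds for $|cs_Y((l^k_+)^*A)|$. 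I then repeat the argument at the opposite end by applying Lemma \ref{lem:theta} at index $k+m$: the gauge transformation $g_{k+m}$ lives on $W[k+m,k+m+2]$, and the identification $Y^{k+m}_-=Y^{k+m+1}_+$ realises the bottom boundary slice of $W[k,k+m]$ as an interior slice of $W[k+m,k+m+2]$, producing the symmetric estimate for $|cs_Y((l^{k+m}_-)^*A)|$ with the $L^2$-norm over $W[k+m-l_Y-2,k+m+l_Y+3]$.

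Finally, because $A$ is ASD we have $|F(A)|^2\,d\text{vol}=Tr(F(A)\wedge F(A))$, and the Stokes theorem applied to $W[k,k+m]$ (exactly as in the proof of Lemma \ref{sumformula}) gives
\[
\frac{1}{8\pi^2}||F(A)||^2_{L^2(W[k,k+m])}= cs_Y((l^k_+)^*A)-cs_Y((l^{k+m}_-)^*A).
\]
The triangle inequality then delivers \eqref{ooo} with a constant $c_5$ built from $c_2$, $c_3$ and $\pi$. The main obstacle is the simultaneous calibration of $\eta_1$: it must be small enough that Lemma \ref{lem:theta} applies, small enough that Lemma \ref{idhom} forces both of the two gauge transformations above to have trivial degree using only the allotted tail budget, and the smallness condition has to depend on $A$ solely through the gap $\min\{Q^{2l_Y+3}_X,1\}-\frac{1}{8\pi^2}||F(A)||^2$ appearing in the statement.
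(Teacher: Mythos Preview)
Your argument is correct and matches the paper's proof: both apply Lemma~\ref{lem:theta} at the two ends $k$ and $k+m$, use Lemma~\ref{idhom} to kill the degree of the resulting gauge transformations, and then conclude via Stokes on the ASD connection together with the $cs_Y$ estimate of Lemma~\ref{lem:fundamental}. The only cosmetic difference is that the paper patches $g_k$ and $g_{k+m}$ into a single gauge transformation $\hat g$ on $W[k,k+m]$ and applies Stokes to $\hat g^*A$, whereas you use the degree-zero conclusion to pass directly from $cs_Y(g^*A|_Y)$ to $cs_Y(A|_Y)$ and apply Stokes to $A$ itself.
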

\begin{proof}
Let $\eta$ be the positive number in Lemma \ref{lem:theta} which depends only the difference $Q^{2l_Y+3}_X-\frac{1}{8\pi^2}||F(A)||^2$. Then for $k>K+l_Y+3$, we have the following inequalities
\[
\sup_{x \in W_k}\sum_{0\leq j\leq q}\left|{\nabla^{j}}_{\theta}(g_k^*A-\theta)(x)\right|^2\leq c_3||F(A)||^2_{L^2(W[k-l_Y-2,k+l_Y+3])}
\]
\begin{align}\label{i1}
\leq c_3(2l_Y+5)\eta_1
\end{align}
and
\[
\sup_{x \in W_{k+m}}\sum_{0\leq j\leq q}\left|{\nabla^{j}}_{\theta}(g_{k+m}^*A-\theta)(x)\right|^2\leq  c_3||F(A)||^2_{L^2(W[k+m-l_Y-2,k+m+l_Y+3]}
\]
\begin{align}\label{i2}
\leq c_3(2l_Y+5)\eta_1.
\end{align}
These inequality \eqref{i1}, \eqref{i2} and Lemma \ref{idhom} imply that for sufficiently small $\eta_1$, the gauge transfromation $g_k|_{Y^+_k}$(resp. $g_{k+m}|_{Y^-_{k+m}}$) is homotopic to the constant gauge transformation.
Hence, there exists a gauge transformation $\hat{g}$ on $W[k,k+m]$ satisfying $\hat{g}|_{W_k}=g_k$ and $\hat{g}|_{W_{k+m}}=g_{k+m}$, moreover, since $A$ is the ASD connection, we have
\[
||F(A)||^2=||F(\hat{g}^*A)||^2_{L^2(W[k,k+m])}=8\pi^2 (cs_Y({(l^k_+)}^*{g_k}^*A))-cs_Y({(l^{k+m}_-)}^*{g_{k+m}}^*A)).
\]
 Applying the inequalities \eqref{i1} and \eqref{i2} again, we get the conclusion.
\qed
\end{proof}

\begin{prop}[Exponential decay]\label{prop:expdecay}There exists $\delta'>0$ satisfying the following statement.

Suppose $A$ is an element in $M^{W[0,\infty]}_{\delta}$ satisfying the assumption of Lemma \ref{lem:estimate}.
Then there exists $c_5(K)>0$ satisfying the following inequality. 
\[
{||F(A)||^2}_{W[k-l_Y-2,k+l_Y+3]} \leq c_5(K)e^{-k\delta'}.
\] 
for $k>K+l_Y+3$.
\end{prop}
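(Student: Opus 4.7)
The plan is to convert Lemma \ref{lem:estimate} into a geometric recursion on the tail energy and iterate. I would define $E(k) := \frac{1}{8\pi^2}||F(A)||^2_{L^2(W[k,\infty))}$. Since $A \in M^{W[0,\infty]}_\delta$ lies in the weighted configuration space with positive weight $\delta$, the total energy $E(0)$ is finite and $E$ is a non-increasing, non-negative function of $k$ with $E(k) \to 0$ as $k \to \infty$. Rewriting Lemma \ref{lem:estimate} in terms of $E$, one has, for $k > K + l_Y + 3$ and any $m \geq 1$,
\[
E(k) - E(k+m) \leq c_3\bigl((E(k-l_Y-2) - E(k+l_Y+3)) + (E(k+m-l_Y-2) - E(k+m+l_Y+3))\bigr).
\]
Letting $m \to \infty$, both $E(k+m)$ on the left and the second bracket on the right vanish, yielding the one-sided estimate
\[
E(k) \leq c_3 \bigl(E(k-l_Y-2) - E(k+l_Y+3)\bigr).
\]

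Next, I would shift indices by setting $j := k - l_Y - 2$ and $L := 2l_Y + 5$, so that $E(j + l_Y + 2) \leq c_3(E(j) - E(j+L))$ for $j > K + 1$. Using the monotonicity $E(j + L) \leq E(j + l_Y + 2)$, this rearranges to the contraction
\[
E(j + L) \leq \frac{c_3}{c_3 + 1}\, E(j) =: \rho\, E(j),
\]
where $\rho < 1$. Iterating $n$ times gives $E(j + nL) \leq \rho^n E(j)$ for all $j > K + 1$ and $n \in \n$. Any large $k$ can be written as $k = j + nL$ with $j$ in a fixed bounded range and $n \geq (k - K - 1)/L - 1$, which yields $E(k) \leq C(K)\, e^{-\delta' k}$ with $\delta' := -(\ln \rho)/L > 0$ and $C(K)$ depending only on $K$ and $E(0)$. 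Finally, since
\[
||F(A)||^2_{L^2(W[k-l_Y-2, k+l_Y+3])} = 8\pi^2 \bigl(E(k-l_Y-2) - E(k+l_Y+3)\bigr) \leq 8\pi^2 E(k-l_Y-2),
\]
the stated exponential decay follows with $c_5(K) := 8\pi^2 C(K) e^{\delta'(l_Y+2)}$.

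The main obstacle I expect is justifying the passage $m \to \infty$ in the recursion: one needs both $E(k+m) \to 0$ and the local-energy term at the right endpoint to vanish, which requires $F(A)$ to have finite unweighted $L^2$-norm on $W[0,\infty]$. This is ensured by the positive weight in the definition of $M^{W[0,\infty]}_\delta$ together with the flatness of the reference connection $\theta$. Note that the rate $\delta'$ depends only on $c_3$, which in turn depends only on the gap $\min\{1,Q^{2l_Y+3}_X\} - \frac{1}{8\pi^2}||F(A)||^2$; thus $\delta'$ is uniform over any family of connections with a fixed positive lower bound on this gap, consistent with the statement.
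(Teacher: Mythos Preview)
Your argument is correct. The paper's own proof is a single sentence: it applies Lemma~5.2 of \cite{Fu90} to the sequence $q_i := \|F(A)\|^2_{L^2(W[i-l_Y-2,\,i+l_Y+3])}$, for which Lemma~\ref{lem:estimate} supplies exactly the recursive hypothesis of Furuta's abstract lemma, and the finiteness of $\sum_i q_i$ (equivalently, finite total curvature) yields the exponential decay. Your tail-energy iteration via $E(k)$ is a direct, self-contained unwinding of that same mechanism: the passage $m\to\infty$ you isolate corresponds precisely to the summability hypothesis in Furuta's formulation, and the contraction $E(j+L)\le\frac{c_3}{c_3+1}E(j)$ is the content of its conclusion. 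One small remark: the constant $c_3$ in Lemma~\ref{lem:estimate} is in fact universal (it originates in Lemma~\ref{lem:theta}), so the rate $\delta'$ you obtain depends only on $l_Y$, not on the energy gap---which only strengthens your closing comment on uniformity.
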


\begin{proof}
This is a consequence of Lemma $\ref{lem:estimate}$ and Lemma $5.2$ in \cite{Fu90} by applying $q_i = ||F(A)||^2_{W[i-l_Y-2,i+l_Y+3]}$.
\qed
\end{proof}

By using a similar argument in Lemma $4.2$ and Lemma $7.1$ of \cite{Fu90} we have:
\begin{lem}[Patching argument]\label{lem:patchingarg}
For a positive number $c_7$, there exists a constant $c_8$ satisfying the following statement holds.

Suppose we have an $L^2_q$ connection $A$, the gauge transformations $g_k$ on $W[k-1,k+1]$ satisfying
\[
\int_{W[k-1,k+1]}\sum_{0\leq j\leq q+1}|{\nabla^{j}}_{\theta}({g_k}^*A|_{W[k-1,k+1]}-\theta)|^2
\]
\[
\leq c_7{||F(A)||^2}_{L^2(W[k-l-3,k+l+2])}.
\]
for any non-negative integer $k$.

Then there exists the positive integer $n_0$ and a gauge transformation $g$ on $W[n_0,\infty] $ satisfying the following condition:
\[
\int_{W[k-1,k+1]}\sum_{0\leq j\leq q+1}|{\nabla^{j}}_{\theta}({g}^*A|_{W[k-1,k+1]}-\theta)|^2
\]
\[
\leq c_8{||F(A)||^2}_{L^2(W[k-l-3,k+l+2])}
\]
for $k>n_0$.
\end{lem}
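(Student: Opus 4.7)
The plan is a standard patching-by-constants argument: on each overlap the local gauge transformations $g_k$ already bring $A$ close to the product connection, so the transition $h_k := g_{k+1} g_k^{-1}$ is nearly parallel, hence approximately constant. I will correct each $g_k$ by a constant element of $SU(2)$ to kill this obstruction, and then smoothly interpolate across overlaps to produce a single global gauge transformation.

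Concretely, first I would use the hypothesis for both $k$ and $k+1$, together with Sobolev embedding $L^2_q \hookrightarrow C^0$ on the $4$-manifold $W[k,k+1]$ (valid since $q\ge 3$), to obtain a pointwise bound
\[
\sum_{j\le q+1}\bigl(|\nabla^j_\theta(g_k^* A-\theta)|^2+|\nabla^j_\theta(g_{k+1}^* A-\theta)|^2\bigr) \le C\|F(A)\|^2_{L^2(W[k-l-3,k+l+3])}
\]
on $W[k,k+1]$. The identity $h_k^{-1}dh_k = g_k^*A - h_k^{-1}(g_{k+1}^*A)h_k$ then controls $h_k^{-1}dh_k$ in $C^q$ by $C'\|F(A)\|_{L^2(W[k-l-3,k+l+3])}$. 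Choosing $n_0$ large enough that this quantity is smaller than the injectivity radius of $\exp$ at $1\in SU(2)$ for all $k\ge n_0$, I can uniquely write $h_k = c_k\exp(\xi_k)$ with $c_k\in SU(2)$ constant and $\xi_k\in\Omega^0(W[k,k+1])\otimes\mathfrak{su}(2)$ vanishing on some fixed slice, with $\|\xi_k\|_{L^2_{q+1}}$ controlled by the same curvature quantity.

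Second, I would define constants $u_k\in SU(2)$ inductively by $u_{n_0}:=1$ and $u_{k+1}:=u_kc_k^{-1}$, and set $\tilde g_k:=u_kg_k$. Since $u_k$ is constant, $\tilde g_k^*A=g_k^*A$ and the hypothesis estimate survives verbatim, while the new transitions $\tilde h_k:=\tilde g_{k+1}\tilde g_k^{-1}=u_k\exp(\xi_k)u_k^{-1}$ now satisfy $\|\tilde h_k-1\|_{L^2_{q+1}(W[k,k+1])}\le C''\|F(A)\|_{L^2(W[k-l-3,k+l+3])}$. Taking a smooth cutoff $\phi:W[k,k+1]\to[0,1]$ equal to $0$ near $Y^k_+$ and $1$ near $Y^{k+1}_-$, chosen periodically so that $\|d\phi\|_{C^q}$ has a $k$-independent bound, I define on $W[k,k+1]$
\[
g := \tilde g_k\cdot\exp\bigl(\phi\cdot\mathrm{Ad}(u_k)\xi_k\bigr),
\]
which matches $\tilde g_k$ near $Y^k_+$ and $\tilde g_{k+1}$ near $Y^{k+1}_-$, hence assembles into a smooth gauge transformation on all of $W[n_0,\infty]$.

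Finally, on each $W[k-1,k+1]$ I would estimate $g^*A-\theta$ by splitting the region into the two transition annuli. On each piece the expansion of $g^*A-\theta$ produces $\tilde g_\ell^*A-\theta$ plus algebraic combinations of $\phi$, $d\phi$, $\xi_k$, $\nabla\xi_k$, and $\xi_k^2$, each of which is pointwise bounded by the squared-curvature quantity $C'''\|F(A)\|^2_{L^2(W[k-l-3,k+l+3])}$ in view of Step~2. This yields the desired inequality with $c_8$ independent of $k$. The main obstacle is analytic rather than formal: converting the $L^2$ hypothesis into the pointwise control needed to stay uniformly inside the injectivity domain of $\exp$ and to apply the logarithm, and bookkeeping the slight enlargement of the curvature neighborhood from $W[k-l-3,k+l+2]$ to $W[k-l-3,k+l+3]$ coming from working simultaneously with $k$ and $k+1$. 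Once these estimates are in place, the patching itself is a routine geometric construction, and the requirement that $n_0$ depend on $A$ is exactly dictated by the injectivity-radius constraint.
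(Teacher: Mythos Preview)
Your proposal is correct and follows precisely the standard patching argument that the paper invokes: the paper does not actually prove this lemma but only cites Lemma~4.2 and Lemma~7.1 of \cite{Fu90}, and what you have sketched is exactly that argument (local transitions $h_k$ are nearly parallel, hence close to constants; correct by constants and interpolate via $\exp$). Two cosmetic remarks: with $h_k=g_{k+1}g_k^{-1}$ the interpolation should multiply on the left, $g=\exp(\phi\,\mathrm{Ad}(u_k)\xi_k)\,\tilde g_k$, so that $\phi=1$ gives $\tilde g_{k+1}$; and in the final step it is $g^*A-\theta$ (not its square) that is pointwise bounded by $C\|F(A)\|_{L^2}$, after which squaring and integrating yields the stated estimate.
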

We use this lemma to prove the next proposition.
\begin{prop}\label{prop:conv}
There exists $\delta'>0$ satisfying the following condition.
Let $K>0$ be a positive number and $\{A_n\}$ be a sequence in $M^{W[0,\infty]}_\delta$ satisfying the following properties:
\begin{itemize}
\item  $\displaystyle 0< \min \{Q^{2l_Y+3}_X,1\}- \sup_{n\in \n } \frac{1}{8\pi^2}||F(A_n)||^2$.
\item
There exists a chain decomposition $\{s^n_j\}$ of $\{A_n\}$ for $\eta_2$ satisfying 
\[
\sup_{n\in \n }|s^m_n(\eta_*)|<\infty
\]
  for 
\[
 \eta_2:= \inf_{n\in \n}\left\{\eta |\text{ constants which depend on $A_n$ in Lemma \ref{lem:theta} and \ref{lem:estimate}} \right\}.
 \]
\end{itemize}

Then there exist a positive integer $N_0$, gauge transformations $\{g_j\}$ on $W[N_0,\infty]$ and subsequence $\{A_{n_j}\}$ of $\{A_n\}$ such that $\{{g_j}^*A_{n_j}\}$ converge to some $A_\infty$ in $L^2_{q,\delta}(W[N_0,\infty])$ for any $0\leq \delta<\delta'$.
\end{prop}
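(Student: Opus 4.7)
The plan is to exploit the hypothesis $\sup_n |s_n^m(\eta_2)| < \infty$, which forces the tails of all $A_n$ to lie in the small-curvature regime simultaneously. This way Lemma \ref{lem:theta}, Lemma \ref{lem:estimate}, Proposition \ref{prop:expdecay} and Lemma \ref{lem:patchingarg} apply with constants independent of $n$, and the convergence follows from local Uhlenbeck compactness combined with a uniform tail estimate coming from exponential decay.

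\medskip

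First, I would fix an integer $N_1$ with $s_n^m(\eta_2) < N_1$ for all $n$, so that $\|F(A_n)\|^2_{L^2(W_k)} < \eta_2$ whenever $k > N_1$ and $n \in \n$. Since $\sup_n \frac{1}{8\pi^2}\|F(A_n)\|^2$ is strictly less than $\min\{Q^{2l_Y+3}_X,1\}$ by assumption, the thresholds $\eta_0,\eta_1$ produced in Lemma \ref{lem:theta} and Lemma \ref{lem:estimate} can be chosen uniformly in $n$, and consequently Proposition \ref{prop:expdecay} applied to each $A_n$ with $K=N_1$ yields
\[
\|F(A_n)\|^2_{L^2(W[k-l_Y-2,\,k+l_Y+3])} \leq C\,e^{-k\delta'}, \quad k > N_1+l_Y+3,
\]
with $C>0$ and $\delta'>0$ independent of $n$ (the $n$-dependence in the proof of Proposition \ref{prop:expdecay} enters only through the uniformly-bounded total curvature).

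\medskip

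Next I would promote the local gauge transformations from Lemma \ref{lem:theta} on each slab $W[k,k+2]$ into a single gauge transformation $g_n$ on a common half-infinite end $W[N_0,\infty]$ using Lemma \ref{lem:patchingarg}. Combining with the uniform exponential decay gives
\[
\int_{W[k-1,\,k+1]} \sum_{0 \leq j \leq q+1} \bigl|\nabla_\theta^{\,j}(g_n^*A_n - \theta)\bigr|^2 \leq C'\,e^{-k\delta'}
\]
for every $k > N_0$ and every $n$, with $N_0$ and $C'$ independent of $n$. Multiplying by the weight $e^{\delta \tau}$ and summing over $k$ shows that $\{g_n^* A_n - \theta\}$ is uniformly bounded in $L^2_{q,\delta}(W[N_0,\infty])$ for every $0 \leq \delta < \delta'$.

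\medskip

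Finally, I would extract a convergent subsequence. On each compact slab $W[N_0, N_0+L]$ the sequence $\{g_n^* A_n\}$ has uniformly bounded $L^2_q$ norm; after further local Coulomb-gauge corrections, Rellich compactness and elliptic regularity for the ASD equation yield $L^2_q$-strong convergence of a subsequence on the interior. A diagonal argument over $L \to \infty$ then produces $\{A_{n_j}\}$ and gauge transformations $g_j$ (absorbing the local Coulomb corrections into the patched $g_n$) such that $g_j^* A_{n_j} \to A_\infty$ on every compact subset of $W[N_0,\infty]$. The uniform tail bound makes $\|g_j^*A_{n_j} - A_\infty\|_{L^2_{q,\delta}(W[N_0+L,\infty])}$ arbitrarily small by taking $L$ large, so the convergence actually takes place in $L^2_{q,\delta}(W[N_0,\infty])$ for every $0 \leq \delta < \delta'$. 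The main obstacle will be this final step: arranging the local Coulomb-gauge corrections needed for Uhlenbeck compactness on each slab so that they remain compatible with the global gauges $g_n$ on the overlaps, so the uniform exponential tail estimate survives the passage to the subsequence.
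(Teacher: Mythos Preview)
Your approach is essentially the one in the paper, but you are manufacturing a difficulty in the final step that does not exist. The estimate you wrote after patching,
\[
\int_{W[k-1,\,k+1]} \sum_{0 \leq j \leq q+1} \bigl|\nabla_\theta^{\,j}(g_n^*A_n - \theta)\bigr|^2 \leq C'\,e^{-k\delta'},
\]
already controls $q+1$ derivatives, not just $q$. Summing over $k$ therefore bounds $a_n := g_n^*A_n - \theta$ uniformly in $L^2_{q+1,\delta}(W[N_0,\infty])$. On any compact slab the Rellich lemma then gives a subsequence converging in $L^2_q$ \emph{in the fixed gauge $g_n$}, with no need for further Coulomb corrections or elliptic bootstrapping. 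This is exactly how the paper proceeds: it notes the $L^2_{q+1,\delta}$ bound, applies Rellich to get $L^2_{q,\text{loc}}$ convergence of a diagonal subsequence, and then combines with the exponential tail estimate to upgrade to global $L^2_{q,\delta}$ convergence. The ``main obstacle'' you anticipate---reconciling additional local Coulomb gauges with the global $g_n$---simply never arises, because the extra derivative built into Lemma~\ref{lem:theta} and carried through Lemma~\ref{lem:patchingarg} was put there precisely to make this step trivial.
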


\begin{proof}
If we apply the Lemma $\ref{lem:theta}$ to $A_n$, there exists gauge transformations $g^n_k$ on $W[k-1,k+1]$ satisfying the following condition: for $k>l_Y+K+3$,
\[
\int_{W[k-1,k+1]}\sum_{0\leq j\leq q+1}|{\nabla^{j}}_{\theta}({g^n_k}^*A_n|_{W[k-1,k+1]}-\theta)|^2
\]
\[
\leq c_3{||F(A_n)||^2}_{L^2(W[k-l-3,k+l+2])}.
\]

On the other hand, we have 
\begin{align}\label{exx}
{||F(A_n)||^2}_{L^2(L^2(W[k-l-3,k+l+2])} \leq c_6(K)e^{-\delta' k}
\end{align}
by using the exponential decay estimate(Proposition \ref{prop:expdecay}).
Using \eqref{exx}, we can show that $n_0$ uniformly with respect to $n$ in Lemma \ref{lem:patchingarg}.
So there exist a large natural number $N_0$ and a gauge transformation on $W[N_0,\infty]$ for each $n$ satisfying
\[
\int_{W[k-1,k+1]}\sum_{0\leq j\leq q+1}|{\nabla^{j}}_{\theta}({g_n}^*A_n|_{W[k-1,k+1]}-\theta)|^2
\]
\begin{align}\label{o}
\leq c'_8{||F(A_n)||^2}_{L^2(W[k-l-3,k+l+2])} \leq c_6(K)c'_8e^{-\delta'k},
\end{align}
where the last inequality follows from \eqref{exx}.

We set $g_n^*A_n=\theta +a_n$. Then we have
\[
||a_n||_{{L^2_{q+1,\delta}(W[N_0,\infty])}}=\sum_{0\leq j\leq q+1}\int_{W[N_0,\infty] } e^{\delta \tau}|{\nabla^{j}}_{\theta}(a_n)|^2
\]
\[
\leq \sum_{0\leq j\leq q+1}\sum_{N_0 \leq i\leq \infty} e^{i\delta } \int_{ {W_i}} |{\nabla^{j}}_{\theta}(a_n)|^2.
\]
Putting this estimate and \eqref{o} together, we have 
\begin{align}\label{expdecay}
||a_n||^2_{{L^2}_{q+1,\delta}(W[k,\infty] )}\leq c_{9}e^{(\delta-\delta')k}
\end{align}
for $k>N_0$.
 We take a subsequence of $\{a_n\}$ which converges on any compact set in $L^2_q(W[k,\infty]) $ by using the Relich Lemma. We denote the limit in $L^2_{q,\text{loc}}$ by $a_\infty$.
Then the exponential decay \eqref{o} and a standard argument implies that $\{a_n\}$ converges $a_\infty$ on $W[N_0,\infty]$ in $L^2_{q,\delta}$-norm. 
\qed
\end{proof}
We now give the proof of Theorem \ref{cptness}.

\begin{proof}
We choose $\eta_2$ in Proposition \ref{prop:conv}.  After taking subsequence of $\{A_n\}$, we consider the chain decomposition $\{s^j_n\}_{1\leq j \leq m} $ for $\eta_2$ of $\{A_n\}$. 
From Proposition \ref{lem:type}, $\{s^j_n\}$ has upper bound by some $K>0$ after taking a subsequence of $\{A_n\}$ again. So we can apply Proposition \ref{prop:conv}, we get the conclusion.
\qed
\end{proof}

\section{Perturbation and Orientation}
To prove the vanishing $[\theta^r]=0$ in Theorem \ref{mainthm}, we use the moduli spaces $M^W(a)_{\pi,\delta}$ and need the transversality for the equation $F^+(A)+s\pi(A)=0$. We also need the orientability of $M^W(a)_{\pi,\delta}$.

\subsection{Holonomy perturbation(2)}
In \cite{Do87}, Donaldson introduced the holonomy perturbation with compact support for irreducible ASD-connections. Combining the technique in \cite{Do87} and the compactness theorem (Theorem \ref{cptness}), we get sufficient perturbations to achieve required transvesality.  
\begin{defn} \upshape \label{hol2}
Let  $\pi$ be an element in $\prod(Y)$ and $a$ be a critical point of $cs_{Y,\pi}$.
We use the following notations: 
\begin{itemize}
\item $\Gamma (W):=\left\{  l:S^1 \times D^3 \ri W \middle | \text{$l$: orientation preserving embedding} \right\}.$
\item $\Lambda^d(W):= \left\{(l_i, \mu^+_i)_{1\leq i \leq d} \in \Gamma(W)^d\times (\Om^+(W)\otimes \su))^d \middle | \text{supp} \mu^+_i \subset \im l_i \right\}$.
\item $\displaystyle \Lambda(W):= \bigcup_{d\in \n } \Lambda^d(W)$.
\end{itemize}

Let $\chi:SU(2)\ri \mathfrak{su}(2)$ be
\begin{align*}
\chi(u):=u-\frac{1}{2}tr(u)id
\end{align*}
and fix $\mu^+_i \in \Omega^+(W)\otimes \mathfrak{su}(2)$ supported on $ l_i(S^1\times D^3)$ for $i\in \{1,\cdots ,d\}$.
For $\epsilon \in \R^d$, we set
\begin{align*}
\sigma_{\Psi}(A,\epsilon):= \sum_{1\leq i \leq d} \epsilon_i  \chi(\hol_{x \in l_i(S^1\times D^3)} (A))\mu^+_i,
\end{align*}
where $\hol_{x\in l_i(S^1\times D^3)}$ is a holonomy around the loop $t \mapsto l_i(t,y_x)$ satisfying $x=l_i (t_x,y_x)$ for some $t_x$ and $\epsilon=(\epsilon_i)_{1\leq i \leq d}$.
For $\Psi=(l_i,\mu_i)_{1\leq i \leq d} \in \Lambda$, Donaldson defined {\it the holonomy perturbation of the ASD-equation}:
\begin{align}\label{pert}
\mathcal{F}_{\pi,\Psi}(A,\epsilon):=F^+(A)+s\pi(A)+\sigma_\Psi(A,\epsilon)=0.
\end{align}
The map $\sigma_\Psi(-,\epsilon)$ is smoothly extended to the map $\A^W(a)_{\delta}\ri \Omega^+(W)\otimes \mathfrak{su}(2)_{L^2_{q-1,\delta}}$ and the map $\A^W(a)_{(\delta,\delta)}\ri \Omega^+(W)\otimes \mathfrak{su}(2)_{L^2_{q-1,(\delta,\delta)}}$. For $\Psi$ and $\epsilon \in \R^d$, {\it the perturbed instanton moduli space} are defined by 
\[
M^W(a)_{\pi,\Psi,\epsilon,\delta}:=\left\{ c \in \B^W(a)_{\delta} \middle |\mathcal{F}_{\pi,\Psi}(c,\epsilon)=0 \right\} 
\]
in the case of $a\in \widetilde{R}^*(Y)_\pi$ and
\[
M^W(a)_{\pi,\Psi,\epsilon,(\delta,\delta)}:=\left\{ c \in \B^W(a)_{(\delta,\delta)} \middle |\mathcal{F}_{\pi,\Psi}(c,\epsilon)=0 \right\}
\]
in the case of $\text{Stab}(a)=SU(2)$.
For a fixed $\epsilon \in \R^d$, if the operator 
\[
d(\mathcal{F}_{\pi,\Psi})_{(A,0)}:T_A\A^W(a)_{\delta}\times \R^d \ri \Omega^+(W)\otimes \su_{L^2_{q-1,\delta}}.
\]
 is surjective for all $[A] \in M^W(a)_{\delta,\pi,\Psi,\epsilon}$, we call $(\Psi,\epsilon)$ {\it regular perturbation} for $a$$\in \widetilde{R}^*(Y)_\pi$.

Let $FM^W(a)_{\delta,\pi,\Psi}$ be {\it the family version of the perturbed instanton moduli spaces} defined by
\[
FM^W(a)_{\delta,\pi,\Psi}:=\left\{ (c,\epsilon) \in \B^W(a)_{\delta}\times \R^d \middle |\mathcal{F}_{\pi,\Psi}(c,\epsilon)=0 \right\}.
\]

\end{defn}
\begin{lem}\label{lem:Sur}Suppose that $Y$ satisfies Assumption \ref{imp}.
There exists $\delta'>0$ such that for a fixed $\delta \in (0,\delta')$, the following statement holds.
Suppose $\pi$ is a holonomy perturbation which is non-degenerate and regular. Let $a$ be an irreducible critical point of $cs_{Y,\pi}$ with $cs_Y(a)<\min\{Q^{2l_Y+3}_X,1\}$.
We assume the next three hypotheses for $(\pi,a)$.
\begin{enumerate}
\item  For $[A] \in M^W(b)_{\pi,\delta}$,
\[
\displaystyle \frac{1}{8\pi^2}\sup_{n\in \n} ||F(A)+s\pi(A)||^2_{L^2(W)} <\min \{1 ,Q^{2l_Y+3}_X\},
\]
 where $b$ is an element of  $\widetilde{R}(Y)_\pi$ with  $cs_{Y,\pi} (b) \leq cs_{Y,\pi}(a)$. 
\item The linear operator
\[
d^+_\theta+  sd\pi^+_\theta:T_\theta\A^W(\theta)_{(\delta,\delta)}\times \R^d \ri \Omega^+(W)\otimes \su_{L^2_{q-1,(\delta,\delta)}}
\]
is surjective.
\item $M(c)_{\pi,\delta}$ is empty set for $c\in \widetilde{R}_\pi(Y)$ satisfying $cs_{Y,\pi}(c)<0$.
\end{enumerate}

Then there exist a small number $\eta>0$ and a perturbation $\Psi$ such that the map
\[
d\mathcal{F}_{\pi,\Psi} :T\A^W(a)_{\delta}\times \R^d \ri \Omega^+(W)\otimes \su_{L^2_{q-1,\delta}}.
\]
is surjective for all point in ${\mathcal{F}_{\pi,\Psi}}^{-1}(0)\cap (\A^W(a)_{\delta}\times B^d(\eta))$.

\end{lem}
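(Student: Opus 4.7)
The plan is to follow the Sard-Smale style transversality argument of Donaldson for cylindrical-end moduli spaces, adapted to the periodic-end setting by using the compactness theorem (Theorem \ref{cptness}) in place of the usual cylindrical-end compactness.

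First I would examine the linearization at $(A,0)$ for $[A]\in M^W(a)_{\pi,\delta}$. The derivative of $\mathcal{F}_{\pi,\Psi}$ decomposes as
\[
d\mathcal{F}_{\pi,\Psi}|_{(A,0)}(c,v) \;=\; (d^+_A + s\, d\pi^+_A)(c) \;+\; \sum_{i=1}^d v_i\,\chi(\hol_{l_i}(A))\,\mu^+_i .
\]
The first summand $L_A := d^+_A + s\, d\pi^+_A$ is Fredholm on the $L^2_{q,\delta}$-completed domain by Theorem \ref{fred}, so $\coker L_A$ is finite dimensional, and surjectivity reduces to showing that the finite-dimensional span of $\chi(\hol_{l_i}(A))\,\mu^+_i$ maps onto $\coker L_A$ after adding $\im L_A$.

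Second, I would achieve pointwise surjectivity at a single $[A]$ by a careful choice of $(l_i,\mu^+_i)$. Identify $\coker L_A$ with $\ker L_A^\ast$ via the $L^2_\delta$ pairing. For a second-order operator of the form $L_A L_A^\ast$, unique continuation applies on the interior of $W$, so any nonzero $\mu\in\ker L_A^\ast$ is nonzero on a dense open subset. Because $a$ is irreducible and arises as the cylindrical asymptotic of $A$, one can choose embedded loops whose $A$-holonomy lies away from $\pm I$, so that $\chi(\hol_{l_i}(A))$ is a nonzero element of $\su$; by pushing such loops into an open set where a prescribed basis element $\mu_j\in\coker L_A$ is nonzero and choosing $\mu^+_i$ bump forms supported there, one arranges a nontrivial pairing with $\mu_j$. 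Running through a basis of $\coker L_A$ and applying a Gram-Schmidt-type adjustment produces a finite $\Psi_A=(l_i,\mu^+_i)_{i=1}^{k_A}$ for which the $\epsilon$-derivative image spans $\coker L_A$.

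Third, I would globalize. Surjectivity is an open condition in $[A]$, so each $\Psi_A$ works on an open neighborhood $U_A\subset M^W(a)_{\pi,\delta}$. Under hypotheses (1), (2), (3), Theorem \ref{cptness} applied together with the Fredholm setup of Subsection \ref{Fred} (hypothesis (2) being what rules out bubble limits at $\theta$ on the periodic end, and hypothesis (3) ruling out lower critical points on the cylindrical end) yields precompactness of $M^W(a)_{\pi,\delta}$. A finite subcover $\{U_{A_1},\dots,U_{A_N}\}$ then gives a single $\Psi$ by concatenating $\Psi_{A_1},\dots,\Psi_{A_N}$, and $d\mathcal{F}_{\pi,\Psi}$ is surjective at every $(A,0)$ in the moduli space. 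Continuity of the full linearization in $(A,\epsilon)$ and the same compactness argument then furnish a uniform $\eta>0$ on which surjectivity persists over $\mathcal{F}_{\pi,\Psi}^{-1}(0)\cap(\A^W(a)_\delta\times B^d(\eta))$.

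The main obstacle will be the second step: producing holonomy perturbations that pair nontrivially with a prescribed cokernel element in the weighted Sobolev setting. Unique continuation for $L_A^\ast$ on the interior of $W$ is classical, but arranging the pairing uses both irreducibility of $a$ (to guarantee nontrivial holonomy along loops in the cylindrical end, where $A$ is genuinely close to $a$) and the fact that the $\mu^+_i$ have compact support, so the weight $e^{\delta\tau}$ only contributes a bounded factor and does not affect the pairing qualitatively. The reduction from an infinite-dimensional to a finite-dimensional perturbation space depends essentially on compactness of $M^W(a)_{\pi,\delta}$, which is exactly where all three assumptions enter.
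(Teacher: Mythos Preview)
Your overall strategy is the right one and matches the paper closely in Steps 1 and 2, but there is a genuine gap in your globalization step. You claim that hypotheses (1)--(3) together with Theorem \ref{cptness} give precompactness of $M^W(a)_{\pi,\delta}$. They do not. Hypothesis (3) only rules out critical points $c$ with $cs_{Y,\pi}(c)<0$; it says nothing about intermediate critical points $b$ with $0\le cs_{Y,\pi}(b)<cs_{Y,\pi}(a)$. A sequence in $M^W(a)_{\pi,\delta}$ can chain-converge to an element of
\[
M(a,b_1)_\pi\times\cdots\times M(b_{v-1},b_v)_\pi\times M^W(b_v)_{\pi,\delta}
\]
with $b_v$ any such intermediate critical point (including $\theta$), so $M^W(a)_{\pi,\delta}$ is in general noncompact and your finite-subcover argument breaks down.

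The paper repairs this by an \emph{induction on the critical points ordered by Chern--Simons value}. One does not try to cover all of $M^W(a_w)_{\pi,\delta}$, but only the closed subset
\[
K_w=\bigl\{[A]\in M^W(a_w)_{\pi,\delta}\ \big|\ \coker\bigl(d\mathcal{F}_{\pi,\Psi^{w-1}}|_{(A,0)}\bigr)\neq 0\bigr\},
\]
where $\Psi^{w-1}$ is the perturbation already constructed, by induction, for all lower critical points $a_1,\dots,a_{w-1}$. The key observation is that $K_w$ \emph{is} compact: if $[A_{n_j}]$ chain-converges to $([B^1],\dots,[B^N],[A^0])$ with $[A^0]\in M^W(b_{v+1})_{\pi,\delta}$ and $b_{v+1}$ strictly below $a_w$, then the cylindrical factors $d^+_{B^i}+d\pi^+_{B^i}$ are surjective because $\pi$ is regular, and $d\mathcal{F}_{\pi,\Psi^{w-1}}|_{(A^0,0)}$ is surjective by the inductive hypothesis. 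Gluing right inverses (as in \cite{SaWe08}) forces $d\mathcal{F}_{\pi,\Psi^{w-1}}|_{(A_{n_j},0)}$ to be surjective for large $j$, contradicting $[A_{n_j}]\in K_w$. Once $K_w$ is compact, your finite-cover argument applies to it and produces $\Psi^w$. The base case of the induction (critical points with $cs_{Y,\pi}=0$, including $\theta$) is handled separately using hypotheses (2) and (3). The same gluing-of-right-inverses idea is reused at the end to pass from $\epsilon=0$ to a small ball $B^d(\eta)$; your appeal to ``continuity'' alone is not enough, since one must again rule out escape to chain limits as $\epsilon_n\to 0$.
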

\begin{proof}
First we show that the surjectivity of $d\mathcal{F}_{\pi,\Psi}$ at the point in  ${\mathcal{F}_{\pi,\Psi}}^{-1}(0)\cap (\A^W(a)_{\delta}\times \{ 0\} )$.
Second, we show that there exists a positive number $\eta>0$ such that $d\mathcal{F}_{\pi,\Psi}$ is surjectivie at the point in  ${\mathcal{F}_{\pi,\Psi}}^{-1}(0)\cap (\A^W(a)_{\delta}\times B^d(\eta))$.
We name the critical point of $cs_{Y,\pi}$ by
\[
0=cs_{Y,\pi}( \theta=a_0 ) \leq cs_{Y,\pi}(a_1)\leq cs_{Y,\pi} (a_2 ) \cdots  \leq cs_{Y,\pi}(a_w=a).
\]
The proof is induction on $w$ and there are four steps.  
\begin{step}\label{step1}  For an irreducible element $A \in \A(a_{w})_{\delta}$ with $0\neq  \coker (d(\mathcal{F}_{\pi,\Psi})_{(A,0)})$, there exists $\Psi(A)=\{l^{A}_i,\mu^{A}_i \}_{1\leq i \leq d(A)}$ such that $d(\mathcal{F}_{\pi,\Psi})_{(A,0)}| \R^{d(A)}$ generates the space \coker $(d^+_A+sd\pi^+ _A)$.
\end{step}
The proof is essentially the same discussion of Lemma $2.5$ in \cite{Do87}. 
We fix $h \in \Om^+(W)\otimes \su_{L^2_{q-1,\delta}}$ satisfying $0\neq h \in \coker (d^+_A+sd\pi _A$) with $||h||_{L^2}=1$. The unique continuation theorems: Proposition $8.6$ (ii) of \cite{SaWe08} for the equation $(d^+_A(-)+d\pi^+_A)^*(-)=0$ on $Y\times (-\infty,-1]$ and Section $3$ of \cite{FU91} for the equation $(d^+_A)^*(-)=0$ on $W[0,\infty]$ imply $h|_{Y\times [-2,0] \cup_Y W_0}\neq 0$.
Then we choose $x_h$ in $Y\times [-2,0] \cup_Y W_0$ so that $h(x_h)\neq0$ holds.
Since $A$ is the irreducible connection, 
\[
\hol(A,x_h)=\left\{ \hol_l(A) \in SU(2) \middle |l \text{: loop based at } x_h \right\}
\]
 is a dense subset of $SU(2)$. 
So we can choose the loops $l^{h}_i$ based at $x_h$ satisfying 
\[
\{e_i=\chi(\hol_{l^{h}}(A))\}_{i} \text{ generates }\su.
\]
 For a small neighborhood $U_{x_h}$ of $x_h$, we can write $h$ by
\[
h|_{U_{x_h}}= \sum_{1\leq i \leq 3}h_i \otimes e_i.
\]
By using a smoothing of $\delta$ function, we have
\begin{align}\label{point}
\inner<h|U_{x_h},\sum_{1\leq i \leq 3}\mu^+_i(h)\otimes \chi(Hol_{l_i}(A))>_{L^2(U_{x_h})} \neq 0,
\end{align}
where $\mu^+_i(h)$ are three self dual $2$-forms supported on $U_{x_h}$. For a fixed generator $\{h^1,\dots ,h^u\}$ of \coker $(d^+_A+sd\pi^+_A)$, we get the points $\{x_{h^j}\} \subset Y\times [-2,0] \cup_Y W_0$, small neighborhoods $\{U_{x_{h^j}}\}$, loops $\{l^{h^j}_i\}$ and self dual 2-forms $\{\mu^+_i(h^j)\}$ satisfying \eqref{point} for all $h^j$. We extend the maps $l^{h^j}_i:S^1\ri W$ to embeddings $S^1\times D^3 \ri W$. We can choose $U_{x_{h^j}}$ satisfying $U_{x_{h^j}} \subset \im\ l^{h^j}$.  We set 
\[
\Psi(A):= (l^{h^j}_i,\mu^+_i(h^j))_{i,j} \in \Lambda(W),
\]
which satisfies the statement of Step \ref{step1}.

For $j\geq0 $ satisfying $cs_{Y,\pi}(w_j)=0$, we show:
\begin{step}\label{step2}
For an element $b \in \widetilde{R}^*(Y)_\pi$$($resp. $b=\theta$$)$  satisfying $cs_{Y,\pi}(b)=0$, there exists a perturbation $\Psi^b$ such that the operator
\[
d\mathcal{F}_{\pi,\Psi^b}|_{(A,0)}:T_A\A^W(b)_\delta \times \R^d\ri \Om^+(W)\otimes \su_{L^2_{q-1,\delta}}
\]
\[
(\text{resp. } d\mathcal{F}_{\pi,\Psi^b}|_{(A,0)}:T_A\A^W(b)_{(\delta,\delta)} \times \R^d\ri \Om^+(W)\otimes \su_{L^2_{q-1,(\delta,\delta)}} \text{})
\]
is surjective for $A \in (\mathcal{F}_{\pi,\Psi^b})^{-1}(0) \cap (\A^W(a)_{\delta}\times \{ 0\})$ $($resp. $A \in (\mathcal{F}_{\pi,\Psi^b})^{-1}(0) \cap (\A^W(a)_{(\delta,\delta)}\times \{ 0\})$$)$.
\end{step}
First we show that $M^W(b)_{\pi,\delta}$ is compact. Let $\{[A_n]\}$ be any sequence in $M^W(b)_{\pi,\delta}$. By the second hypothesis, we have 
\[
\frac{1}{8\pi^2}\sup_{n \in \n}{||F(A_n)||^2_{L^2(W[0,\infty])}} \leq \frac{1}{8\pi^2}\sup_{n \in \n}||F(A_n)+s\pi(A_n)||^2_{L^2(W)}<\min \{1 ,Q^{2l_Y+3}_X\}.
\]
By Theorem $\ref{cptness}$, there exist a large positive number $N$ and the gauge transformations $\{g_n\}$ over $W[N,\infty]$ such that $\{g_n^*A_n\}$  converges  over $W[N,\infty]$ for small $\delta$ after taking a subsequence. Note that $Y\times (-\infty,0] \cup_Y W[0,N+1]$ is a cylindrical end manifold and we can apply the general theory developed on Section $5$ of \cite{Do02}. In particular, there exist gauge transformations $\{h_n\}$ on $Y\times (-\infty,0] \cup_Y W[0,N+1]$ such that $\{h_n^*A_n|_{Y\times (-\infty,0] \cup_Y W[0,N+1]}\}$ has a chain convergent subsequence in the sense in Section $5$ in \cite{Do02} because the bubble phenomenon does occur under the first hypothesis
\[
\frac{1}{8\pi^2}\sup_{n \in \n}||F(A_n)+s\pi(A_n)||^2_{L^2(W)}<1.
\]
By gluing $\{g_n\}$ and $\{h_n\}$, we obtain a chain convergent subsequence
\[
[A_{n_j}] \ri ([C^1],\dots,[C^N], [A^0]) \in M(b=c_1,c_2)_\pi \times \dots \times M(c_v,c_{v+1})_\pi \times M^W(c_{v+1})_{\pi,\delta}
\]
 with $c_i \in \widetilde{R}(Y)_\pi$ .  Suppose that $[A_{n_j}] \ri [A^0] \in M(b)_{\pi,\delta}$ does not hold. We get $cs_{Y,\pi}(c_{v+1})<0$ because the moduli spaces 
 \[
 M(b,c_1)_\pi,\cdots , M(c_v,c_{v+1})_\pi
 \]
  are non-empty sets. However this contradicts to the assumption of $M(c)_{\pi,\delta}=\emptyset$ for $c\in \widetilde{R}(Y)_\pi$ with $cs_{Y,\pi}(c)<0$.

When $b$ is an irreducible connection, the compactness of $M(b)_{\pi,\delta}$, Step \ref{step1} and the openness of surjective operators imply Step \ref{step2}. When $b$ is equal to $\theta$, the second hypothesis implies Step \ref{step2}.

For the inductive step, we show:
\begin{step}\label{k}
Suppose there is a perturbation 
\[
\Psi^{w-1}=(l^{w-1}_i, \mu^{w-1}_i)_{i} \in \Lambda(W)
\]
 such that the operators \[
 d\mathcal{F}_{\pi,\Psi^{w-1}}|_{(A,0)}:T_A\A^W(a_j)_\delta \ri \Om^+(W)_{L^2_{q-1,\delta}}
 \]
 is surjective for $(A,0) \in (\mathcal{F}_{\pi,\Psi^{w-1}})^{-1} \cap (\A^W(a_j) \times \{0\})$ and $j \in \{1, \cdots, w-1\}$.
Then the space
\[
K_w:=\left\{A\in M^W(a_w)_{\pi,\delta} \middle |\ 0\neq  \coker (d\mathcal{F}_{\pi,\Psi^{w-1}}|_{(A,0)}) \subset \Omega^{+}(W)_{L^2_{q-1,\delta}} \right\}
\]
is compact.
\end{step}
Let $\{[A_n]\}$ be a sequence in $K_w$.
By the similar estimate in Step \ref{step2} and Theorem \ref{cptness}, we get a chain convergent subsequence
\[
[A_{n_j}] \ri ([B^1],\dots,[B^N], [A^0]) \in M(a_w=b_1,b_2)_\pi \times \dots \times M(b_v,b_{v+1})_\pi \times M^W(b_{v+1})_{\pi,\delta}
\]
 with $b_i \in \widetilde{R}(Y)_\pi$. Suppose that $[A_{n_j}] \ri [A^0] \in M(a_{w})_{\pi,\delta}$ does not hold.  In this case, the operators $d^+_{B^i}+d\pi_{B^i}$ on $Y\times \R$ and the operators $d\mathcal{F}_{\pi,\Psi^{w-1}}|_{(A^0,0)}$ on $W$ are surjective in the suitable functional spaces by the assumption of $\pi$ and the induction. For large $j$, the operator $d\mathcal{F}_{\pi,\Psi^{w-1}}|_{(A^{n_j},0)}$ can be approximated by the gluing of the operators $d_{B^i}^++d\pi_{B^i}$, $d\mathcal{F}_{\pi,\Psi^{w-1}}|_{(A^0,0)}$. By gluing the right inverses of them as in Theorem $7.7$ of \cite{SaWe08}, $d\mathcal{F}_{\pi,\Psi^{w-1}}|_{(A^{n_j},0)}$ also has a right inverse for sufficiently large $j$. This is a contradiction and we have the conclusion of Step \ref{k}.

For induction, we need to show:
\begin{step}\label{step4}
There exists the perturbation $\Psi^w$ satisfying the surjectivity of the operator
\[
d\mathcal{F}_{\pi,\Psi^w}:\A^W(a_w)_\delta \times \R^d\ri \Om^+(W)\otimes \su_{L^2_{q-1,\delta}}
\]
for any point in  $(\mathcal{F}_{\pi,\Psi^w})^{-1}(0)\cap (\A(a_w)_{\delta}\times \{ 0\} )$.  
\end{step}
We take the perturbation $\Psi_A=((l^j_A),(\mu^+_j(A)))$ for each $ A\in K_w$ in Step 1. Because $K_w$ is compact and surjectivity of the operators is open condition, there exist $\{A_1,\cdots ,A_k\} \subset K_w$ and a perturbation $\Psi^w$ such that
\[
d\mathcal{F}_{\pi,\Psi^w}|_{(A,0)}:\A^W(a_w)_\delta \times \R^d\ri \Om^+(W)\otimes \su_{L^2_{q-1,\delta}}
\]
is surjective for all $(A,0) \in (\mathcal{F}_{\pi,\Psi^w})^{-1}(0) \cap (\A^W(a)_{\delta}\times \{ 0\})$.
Here $\Psi^w$ is defined by 
\[
\Psi^w:=((l^j_{A_1} \cdots l^j_{A_k}, l^{w-1}_i) ,(\mu^+_j(A_1),\cdots,\mu^+_j(A_k), \mu^{w-1}_i))
\]
which satisfies the property in Step \ref{step4}.

Second, we show that the operator $d\mathcal{F}_{\pi,\Psi^w}$ is surjective for any point in  ${\mathcal{F}_{\pi,\Psi^w}}^{-1}(0)\cap (\A^W(a)_{\delta}\times  D^d(\eta) )$. Suppose there is no $\eta$ such that the statement holds. Then there is a sequence $\{(A_n,\epsilon_n)\}$ in $M^W(a)_{\delta,\pi,\Psi,\epsilon_n}$ which satisfies that $\epsilon_n \ri 0$ as $n\ri \infty$ and $d\mathcal{F}_{\pi,\Psi^w}|_{(A_n,\epsilon_n)}$ is not surjective for all $n\in \n$. Because the bubble does occur, $\{A_n\}$ has a chain convergent subsequence to 
\[
([B^1],\dots , [B^N],[A^0]) \in M(b_0,b_1)_\pi \times \dots \times M(b_{v-1},b_v)_\pi \times M^W(b_v)_{\delta,\pi,\Psi,0}
\]
 for some $b_i \in \widetilde{R}(Y)_\pi$. Since $\pi$ is a regular perturbation and $d\mathcal{F}_{\pi,\Psi^w}|_{(A^0,0)}$ is surjective, there exist the right inverses of $d^+_{B^1}+d\pi^+_{B^1}, \dots,d^+_{B^N}+d\pi^+_{B^N}$ and $d\mathcal{F}_{\pi,\Psi^w}|_{(A^0,0)}$ for suitable functional spaces. By the gluing of the right inverses as in Step \ref{k}, $d\mathcal{F}_{\pi,\Psi^w}|_{(A_N,\epsilon_N)}$ also has the right inverse for large $N$. This is a contradiction and this completes the proof.
\qed
\end{proof}
\begin{thm}\label{Tra} For a given data $(\delta,\pi,a)$ in Lemma \ref{lem:Sur}, there exist $\eta>0$, a perturbation $\Psi$ and a dense subset of $R \subset$ $B^d(\eta) \subset \R^d$ such that $(\Psi,b)$ is a regular perturbation for $b \in R$.
\end{thm}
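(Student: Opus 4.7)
The strategy is the classical Sard-Smale transversality argument. Take $\Psi$ and $\eta>0$ from Lemma~\ref{lem:Sur}, so that the combined linearization
\[
d\mathcal{F}_{\pi,\Psi}\big|_{(A,\epsilon)}\colon T_A\A^W(a)_\delta\times\R^d\to\Omega^+(W)\otimes\su_{L^2_{q-1,\delta}}
\]
is surjective at every $(A,\epsilon)\in\mathcal{F}_{\pi,\Psi}^{-1}(0)\cap(\A^W(a)_\delta\times B^d(\eta))$. Since $a\in\widetilde R^*(Y)_\pi$ is irreducible, the slice theorem for the gauge action together with the implicit function theorem endows the parametrised moduli space
\[
FM^\eta \ := \ FM^W(a)_{\delta,\pi,\Psi}\cap(\B^W(a)_\delta\times B^d(\eta))
\]
with a smooth Banach-manifold structure.

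Next, consider the finite-dimensional projection $\mathrm{pr}\colon FM^\eta\to B^d(\eta)$, $(A,\epsilon)\mapsto\epsilon$. A standard diagram chase, using the combined surjectivity of $d\mathcal{F}_{\pi,\Psi}$, gives natural isomorphisms
\[
\ker d\mathrm{pr}_{(A,\epsilon)}\cong \ker\bigl(d\mathcal{F}_{\pi,\Psi}|_{T_A\A}\bigr),\qquad
\coker d\mathrm{pr}_{(A,\epsilon)}\cong \coker\bigl(d\mathcal{F}_{\pi,\Psi}|_{T_A\A}\bigr).
\]
The fibre operator $d\mathcal{F}_{\pi,\Psi}|_{T_A\A}=d^+_A+s\,d\pi^+_A+d\sigma_\Psi(\cdot,\epsilon)$ is a compact perturbation of the operator in \eqref{elli}, and is therefore Fredholm by Theorem~\ref{fred} (for $\delta$ outside the exceptional discrete set). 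Hence $\mathrm{pr}$ is a Fredholm map of separable Banach manifolds of some finite index.

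By the Sard-Smale theorem the set $R\subset B^d(\eta)$ of regular values of $\mathrm{pr}$ is residual, and residual subsets of the finite-dimensional ball $B^d(\eta)$ are dense by Baire's theorem. For $b\in R$ and every $A$ with $(A,b)\in FM^\eta$, surjectivity of $d\mathrm{pr}_{(A,b)}$ is equivalent, via the cokernel identification above, to surjectivity of $d\mathcal{F}_{\pi,\Psi}|_{T_A\A}$; since this implies surjectivity of the full operator $d\mathcal{F}_{\pi,\Psi}\colon T_A\A^W(a)_\delta\times\R^d\to\Omega^+(W)\otimes\su_{L^2_{q-1,\delta}}$ a fortiori, $(\Psi,b)$ is a regular perturbation for $a$ in the sense of Definition~\ref{hol2}.

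The main delicate point is checking that $d\mathcal{F}_{\pi,\Psi}|_{T_A\A}$ is Fredholm and depends smoothly on $(A,\epsilon)\in FM^\eta$ in the weighted $L^2_{q,\delta}$-norms. This reduces, via the compact support of the $\mu^+_i$ appearing in $\Psi$ and the smoothness of $\chi\circ\hol$, to the Fredholm statement of Theorem~\ref{fred} for the unperturbed operator together with a compact correction, and is thus essentially routine.
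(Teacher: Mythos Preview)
Your proposal is correct and follows essentially the same route as the paper, which simply cites Lemma~\ref{lem:Sur}, the generic-metrics argument from Section~3 of \cite{FU91}, and the Sard--Smale theorem. You have written out in detail the parametrised transversality/Sard--Smale argument that those references encode.
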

\begin{proof}
This is a conclusion of Lemma \ref{lem:Sur}, the argument in Section $3$ of \cite{FU91} and the Sard-Smale theorem. 
\qed
\end{proof}

Applying the implicit function theorem, we get a structure of manifold of $M^W(a)_{\delta,\pi,\Psi,b}$. Its dimension coincides with the Floer index $\ind(a)$ of $a$ by Proposition \ref{calfred}. Therefore we have:
\begin{cor}\label{trans}
For given data $(\delta,\pi,a)$ in Lemma \ref{lem:Sur}, there exist $\eta>0$, a perturbation $\Psi$ and a dense subset of $R \subset$ $B^d(\eta) \subset \R^d$ such that $M^W(a)_{\delta,\pi,\Psi,b}$ has a structure of manifold of dimension $\ind(a)$.
\end{cor}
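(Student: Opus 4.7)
The plan is to combine Theorem \ref{Tra} with the implicit function theorem on the quotient configuration space, and then identify the dimension via Proposition \ref{calfred}.

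First I would invoke Theorem \ref{Tra} to produce $\eta > 0$, a perturbation $\Psi \in \Lambda(W)$, and a dense subset $R \subset B^d(\eta)$ such that for every $b \in R$ the pair $(\Psi, b)$ is a regular perturbation for $a$, i.e.\ the linearization
\[
d\mathcal{F}_{\pi,\Psi}|_{(A,b)} : \Omega^1(W)\otimes \su_{L^2_{q,\delta}} \to \Omega^+(W)\otimes \su_{L^2_{q-1,\delta}}
\]
is surjective at every $[A] \in M^W(a)_{\delta,\pi,\Psi,b}$. Note that regularity in Definition \ref{hol2} was stated with varying $\epsilon$, but for fixed $b \in R$ the $\R^d$-factor is no longer available, so I would first observe that this fixed-$b$ surjectivity is precisely what Lemma \ref{lem:Sur} and the Sard-Smale argument deliver: for a generic value $b \in R$ of the finite-dimensional parameter, the restriction to $A$-variations alone is surjective at every solution.

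Next I would pass to the quotient. Since $a$ is irreducible, its stabilizer is $\pm 1$ and the gauge group $\G^W(a)_\delta$ acts with discrete stabilizers on the irreducible stratum; a standard Coulomb slice based at a solution $A_0$ gives a local model for $\B^W(a)_\delta$ in which the solution set of $\mathcal{F}_{\pi,\Psi}(\cdot, b) = 0$ is cut out by a smooth map whose linearization is exactly the Fredholm operator in \eqref{elli} (perturbed by $sd\pi^+ + d\sigma_\Psi$, which is a compact perturbation of the symbol-principal part and therefore preserves the Fredholm property and index). The regularity from the previous paragraph says this linearization is surjective at every zero. The Banach-space implicit function theorem then endows $M^W(a)_{\delta,\pi,\Psi,b}$ with the structure of a smooth manifold whose local dimension equals the Fredholm index of the linearization.

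Finally, that Fredholm index is unchanged by the compact perturbation arising from $\pi$ and $\Psi$, so it equals $\ind_W(a)$ as introduced after Theorem \ref{fred}, and by Proposition \ref{calfred} we have $\ind_W(a) = \ind(a)$. Putting these three steps together yields the claimed manifold of dimension $\ind(a)$. The only mildly subtle point, and the one I would be most careful about, is verifying that the holonomy perturbation term $d\sigma_\Psi$ is a compact (or at least lower-order) perturbation of $d^+_A + sd\pi^+_A$ in the weighted Sobolev setup, so that both the Fredholm property and the index are preserved; this is standard for holonomy perturbations with compactly supported $\mu_i^+$, since $\sigma_\Psi$ is built from bundle-valued pointwise holonomy maps and hence defines a smooth map into $L^2_{q-1,\delta}$ with linearization that factors through a lower Sobolev norm.
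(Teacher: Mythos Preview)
Your proposal is correct and follows essentially the same route as the paper: the paper simply states (in the sentence preceding the corollary) that applying the implicit function theorem gives the manifold structure, and that Proposition \ref{calfred} identifies the dimension with $\ind(a)$. Your additional care in unpacking the Sard--Smale step, the Coulomb slice, and the compactness of the holonomy-perturbation term is all standard and correct, and more detailed than what the paper writes.
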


\subsection{Orientation}\label{ori}
In \cite{Do87}, Donaldson showed the orientability of the instanton moduli spaces for closed oriented 4-manifolds. In this subsection, we
deal with the case for non-compact 4-manifold $W$ by generalizing Donaldson's argument.  More explicitly, we show that the moduli space $M^W(a)_{\delta,\pi}$ is orientable. We also follow Fredholm and moduli theory in \cite{T87} to formulate the configuration space for $SU(l)$-connections for $l\geq 2$.

Let $Z$ be a compact oriented 4-manifold which satisfies $\partial Z=Y$ and $H_1(Z)\cong 0$. We set $Z^+:=(-Z)\cup_Y W[0,\infty]$ and $\hat{Z}:= (-Z)\cup_Y Y\times [0,\infty)$. Fix a Riemannian metric $g_{Z^+}$ on $Z^+$ with $g_{Z^+}|_{W[0,\infty]}=g_W|_{W[0,\infty]}$ and Riemannian metric $g_{\hat{Z}}$ with $g_{\hat{Z}}|_{Y\times [0,\infty)}= g_Y \times g^{\text{stan}}_\R$. First, we introduce the configuration spaces for $SU(l)$-connections on $W$ and $Z^+$ for $l\geq 2$ and {\it $SU(2)$-configuration space} for $\hat{Z}$.
\begin{defn}\upshape 
Fix a positive integer $q \geq3$. For an irreducible $SU(2)$-connection $a$ on $Y$, we define
\[
\A^{W}(a)_{(\delta,\delta),l}:= \left\{A_{a}+c \middle | c \in \Om^1(W)\otimes \mathfrak{su}(l)_{L^2_{q,(\delta,\delta)}} \right\},
\]
\[
\A^{Z^+}_{\delta,l}:= \left\{\theta+c \middle | c \in \Om^1(Z^+)\otimes \mathfrak{su}(l)_{L^2_{q,\delta}} \right\},
\]
and
\[
\A^{\hat{Z}}(a):= \left\{B_a+c \middle | c \in \Om^1(\hat{Z})\otimes \mathfrak{su}(2)_{L^2_{q}} \right\},
\]
where 
\begin{itemize}
\item $A_{a}$ is an $SU(l)$-connection on $W$ with $A_{a}|_{Y\times (-\infty,-1]}=\pr^* (a\oplus \theta)$, $A_{a}|_{W[0,\infty] }=\theta$ 
\item $B_a$ is an $SU(2)$-connection on $\hat{Z}$ with $B_{a}|_{Y\times [0,\infty)}=\pr^*a$.
\item
$L^2_{q,(\delta,\delta)}(W)$-norm is defined by
\[
||f||^2_{L^2_{q,(\delta,\delta)}(W)}:= \sum_{0\leq i \leq q} \int_W e^{\tau' \delta}  |\nabla_{A_a}^i f |^2d\text{vol} ,
\]
where $\tau'$ is defined in Definition \ref{confset} and $f$ is an element in $\Om^1(W)\otimes \su$ with compact support.
\item $L^2_{q,\delta}(Z^+)$-norm is defined by
\[
||f||^2_{L^2_{q,\delta}(Z^+)}:= \sum_{0\leq i \leq q} \int_{Z^+} e^{\tau'' \delta}  |\nabla^i_\theta f |^2d\text{vol},
\]
where $\tau'':Z^+\ri [0,1]$ is a smooth function satisfying $\tau''|_{W[0,\infty]}=\tau$ defined in Definition \ref{confset} and  $f$ is an element in $\Om^1(Z^+)\otimes \su$ with compact support.
\item $L^2_{q}(\hat{Z})$-norm is defined by
\[
||f||^2_{L^2_{q}(\hat{Z})}:= \sum_{0\leq i \leq q} \int_{\hat{Z}}  |\nabla_{B_a}^i f |^2d\text{vol} ,
\]
where  $f$ is an element in $\Om^1(\hat{Z})\otimes \su$ with compact support.
\end{itemize}
We also define {\it the $SU(l)$ configuration spaces} $\B^W (a)_{\delta,\delta}^{l}$, $\B^{Z^+}_{\delta,l}$ and {\it $SU(2)$-configuration spaces} $\B^{\hat{Z}}(a)$ by
\[
\B^W (a)_{(\delta,\delta),l}:=\A^W(a)_{(\delta,\delta),l} /\G^W(a)_{l},\ \B^{Z^+}_{\delta,l}:=\A^{Z^+}_{\delta,l} /\G^{Z^+}_{l}
\]
and
\[
\B^{\hat{Z}}(a):=\A^{\hat{Z}}(a) /\G^{\hat{Z}}(a)
\]
where $\G^W(a)_{l}$, $\G^{Z^+}_{l}$ and $\G^{\hat{Z}}(a)$ are given by 
\[
\G^W(a)_l:=\left\{ g\in \aut(W\times SU(l)) \subset \End(\mathbb{C}^l)_{L^2_{q+1,\text{loc}}} \middle| \nabla_{A_a}(g) \in L^2_{q,(\delta,\delta)}(W)\right\} ,
\]
\[
\G^{Z^+}_l:=\left\{ g\in \aut(Z^+\times SU(l)) \subset \End(\mathbb{C}^l)_{L^2_{q+1,\text{loc}}} \middle| d(g) \in L^2_{q,\delta}(Z^+) \right\}
\]
and
\[
\G^{\hat{Z}}(a):= \left\{ g\in \aut(\hat{Z}\times SU(l)) \subset \End(\mathbb{C}^2)_{L^2_{q+1,\text{loc}}} \middle| \nabla_{B_a}(g) \in L^2_{q,\delta}(\hat{Z}) \right\}.
\] The action of $\G^W(a)_l$(resp. $\G^{Z^+}_l$, $\G^{\hat{Z}}(a)$) on $\A^{W}(a)_{(\delta,\delta).l}$(resp. $\A^{Z^+}_{\delta,l}$, $\A^{\hat{Z}}(a)$)  is the pull-backs of connections.
We define {\it the reduced gauge group} by 
\[
\hat{\G}^{W}(a)_l:= \left\{ g \in \G^W(a)_l \middle |\lim_{t \ri -\infty} g|_{Y\times t} =id\  \right\} ,
\]
\[
\hat{\G}^{W,\text{fr}}(a)_l:=\left\{ g \in \hat{\G}^{W}(a)_l \middle| \lim_{n \ri \infty} g|_{W_n} \ri id \right\}
\]
and
\[
\hat{\G}^{Z^+}_l:= \left\{ g \in \G^{Z^+}_l \middle | \lim_{n \ri \infty} g|_{W_n} \ri id  \right\}.
\]
Then we define
\[
\hat{\B}^{W} (a)_{(\delta,\delta),l} :=\A(a)^{W}_{(\delta,\delta)}/ \hat{\G}^{W}(a)_l,\ \hat{\B}^{W,\text{fr}} (a)_{(\delta,\delta),l} :=\A(a)^{W}_{(\delta,\delta)}/ \hat{\G}^{W,\text{fr}}(a)_l
\]
and
\[
 \hat{\B}^{Z^+}_{\delta,l}:=\A^{Z^+}_{\delta,l}/ \hat{\G}^{Z^+}_{l}.
\]
\end{defn}
The group $\hat{\G}^{W,\text{fr}}(a)_l$ (resp. $\hat{\G}^{Z^+}_l$) has a structure of Banach Lie sub group of $\G^W(a)_l$ (resp. $\G^{Z^+}_l$). By the construction of them, there are exact sequences 
\[
\hat{\G}^{W}(a)_l \ri \G^W(a)_l \ri \stab(a\oplus \theta),\ \hat{\G}^{W,\text{fr}}(a)_l\ri \hat{\G}^W(a)_l \ri SU(l)
\]
and
\[
\hat{\G}^{Z^+}_l\ri {\G}^{Z^+}_l \ri SU(l)
\]
of Lie groups.
The group $\hat{\G}^{W}(a)_l$(resp. $\hat{\G}^{Z^+}_l$) acts on $\A^{W} (a)_{(\delta,\delta),l}$(resp. $\A^{Z^+}_{\delta,l})$ freely.

\begin{prop}\label{simp}
For $l\geq 3$ and an $SU(2)$-flat connection $a$, there exists a positive number $\delta'$ such that for a positive real number $\delta$ less than $\delta'$ the following properties hold. 
\begin{itemize}
\item $\hat{\B}^{W} (a)_{(\delta,\delta),l}$ is simply connected.
\item  $\hat{\B}^{Z^+}_{\delta,l}$ is simply connected.
\end{itemize}
\end{prop}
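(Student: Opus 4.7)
The strategy is the standard principal-fibration argument for gauge-theoretic configuration spaces, adapted to the non-compact/Sobolev setting. Throughout I read $l$ as the rank of the structure group $SU(l)$, matching the definition $\Om^1(-)\otimes \mathfrak{su}(l)$ above; the hypothesis $l\geq 3$ will enter via the low-dimensional homotopy groups of $SU(l)$. First I verify that $\hat{\G}^W(a)_l$ acts freely on $\A^W(a)_{(\delta,\delta),l}$ and $\hat{\G}^{Z^+}_l$ acts freely on $\A^{Z^+}_{\delta,l}$: a stabilizer element is covariantly constant, and the limit condition built into the reduced gauge group (identity on the cylindrical end for $\hat{\G}^W$, identity at the periodic infinity for $\hat{\G}^{Z^+}$) forces it to be $\mathrm{id}$. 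Together with the Banach--Lie structure already recorded, this makes $\A\to\hat{\B}$ a principal $\hat{\G}$-bundle; since $\A$ is affine hence contractible, the long exact sequence of the fibration yields $\pi_0(\hat{\B})=0$ and $\pi_1(\hat{\B})\cong\pi_0(\hat{\G})$, so the claim reduces to $\pi_0(\hat{\G}^W(a)_l)=0$ and $\pi_0(\hat{\G}^{Z^+}_l)=0$.

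Next I would use the exact sequence $\hat{\G}^{W,\text{fr}}(a)_l\to\hat{\G}^W(a)_l\to SU(l)$ from evaluation at the periodic end (and its $Z^+$-analogue) together with connectedness of $SU(l)$ to reduce further to the vanishing of $\pi_0$ for the fully reduced gauge groups. Elements of $\hat{\G}^{W,\text{fr}}$ and $\hat{\G}^{Z^+}$ are, after smoothing/Sobolev approximation, based maps into $SU(l)$ sending every end to the identity, and their homotopy classes can be computed by obstruction theory. For $l\geq 3$ the fibration $SU(l-1)\to SU(l)\to S^{2l-1}$ with $2l-1\geq 5$, combined with the fact that $\pi_4(SU(2))=\z/2$ dies after one stabilization, gives
\[
\pi_1(SU(l))=\pi_2(SU(l))=\pi_4(SU(l))=0,\qquad \pi_3(SU(l))=\z.
\]
Hence the Postnikov tower of $SU(l)$ through degree $4$ is just $K(\z,3)$, and the end-relative homotopy classes from the $\leq 4$-dimensional $W$ or $Z^+$ into $SU(l)$ are detected by the degree-$3$ primary obstruction:
\[
\pi_0(\hat{\G}^{W,\text{fr}}(a)_l)\cong H^3_c(W;\z),\qquad \pi_0(\hat{\G}^{Z^+}_l)\cong H^3_c(Z^+;\z).
\]

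Finally, Poincaré duality on the oriented open 4-manifolds identifies these groups with $H_1(W;\z)$ and $H_1(Z^+;\z)$ respectively. The homology-cobordism hypothesis on $W_0$ together with the product structure at the cylindrical end gives $W\simeq Y$ through homology, hence $H_1(W)=H_1(Y)=0$; for $Z^+=(-Z)\cup_Y W[0,\infty]$, the standing assumption $H_1(Z)=0$ combined with $H_\ast(W[0,\infty])\cong H_\ast(S^3)$ and Mayer--Vietoris yields $H_1(Z^+)=0$. Both obstruction groups vanish, giving the desired simple connectivity. The main obstacle I expect is the analytic step identifying the Banach--Lie group $\hat{\G}$ (defined via the $L^2_{q,(\delta,\delta)}$ and $L^2_{q,\delta}$ completions used in the paper) with a mapping space of the correct weak homotopy type so that classical obstruction theory applies; the role of $l\geq 3$ is precisely that $\pi_4(SU(l))=0$ kills the secondary obstruction in $H^4(W,\text{ends};\z/2)$ that would otherwise appear in the $SU(2)$ setting.
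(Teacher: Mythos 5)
Your argument is correct and follows essentially the same route as the paper's: reduce to $\pi_0$ of the end-framed gauge group via the principal fibration $\hat{\G}\to\A\to\hat{\B}$ together with connectivity of $SU(l)$, identify that $\pi_0$ with $[W^*,SU(l)]$ (resp.\ $[(Z^+)^*,SU(l)]$), and kill the only obstruction, which lives in $H^3_c(W;\pi_3(SU(l)))\cong H_1(W;\z)=0$ (resp.\ $H_1(Z^+;\z)=0$) by Poincar\'e duality, with $l\geq 3$ used precisely so that $\pi_1(SU(l))=\pi_2(SU(l))=\pi_4(SU(l))=0$. The only organizational difference is that you pass through $\pi_0$ of the gauge-group extension $\hat{\G}^{W,\mathrm{fr}}(a)_l\to\hat{\G}^{W}(a)_l\to SU(l)$, whereas the paper compares configuration spaces via the $SU(l)$-fibration $\hat{\B}^{W,\mathrm{fr}}\to\hat{\B}^{W}$; the two bookkeeping choices are interchangeable.
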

\begin{proof} We will show only the first property. The second one is shown in a similar way to the first case. We use the condition $H_1(Z,\z)\cong 0$ for the second property.

 Since $\pi_i(SU(l))=0$ for $i=0,1$, 
 \[
 \pi_1(\hat{\B}^{W}(a)_{(\delta,\delta),l} )\text{ is isomorphic to }\pi_1(\hat{\B}^{W,\text{fr}} (a)_{(\delta,\delta),l} ).
 \]
Therefore, we will show $\pi_1(\hat{\B}^{W,\text{fr}} (a)_{(\delta,\delta),l})=0$.
There exists $\delta'>0$ such that for $0< \delta<\delta'$, 
\begin{align}\label{fib}
\hat{\G}^{l}(a)_{(\delta,\delta)}^f \ri \A^{W} (a)_{(\delta,\delta),l} \ri \hat{\B}^{W,\text{fr}} (a)_{(\delta,\delta),l}
\end{align}
is a fibration since \eqref{fib} has a local slice due to Fredholm and moduli theory in \cite{T87}. Let $W^*$ be the one point compactification of $W$. Using $\eqref{fib}$, we obtain 
\[
\pi_1(\hat{\B}^{W,\text{fr}} (a)_{(\delta,\delta),l}) \cong \pi_0(\hat{\G}^{l}(a)_{(\delta,\delta)}^f ) \cong [W^*,SU(l)].
\]
Since $\pi_i(SU(l))$ vanishes for $i=0,1,2,4$, the obstruction for an element of $ [W^*,SU(l)]$ to be homotopic to the constant map lives in $H^3(W^*,\pi_3(SU(l))) \cong H^3_{\text{comp}}(W,\pi_3(SU(l))) \cong H_1(W,\pi_3(SU(l)))=0$ where the second isomorphism is the Poincar\'e duality.

This implies 
\[
\pi_1(\hat{\B}^{W,\text{fr}} (a)_{(\delta,\delta),l})\cong 0.
\]
\qed
\end{proof}

We now define the determinant line bundles. 
For simplify, we impose Assumption $\ref{imp}$ on $Y$.
\begin{defn}\upshape
Let $\pi$ be an element in $\prod(Y)^{\text{flat}}$ and $(\Psi,\epsilon)$ be a perturbation in Subsection \ref{hol2} and fix an element $a \in \widetilde{R}(Y)$. 
For $c \in \B^{W}(a)_{(\delta,\delta),l}$ ($\hat{\B}^{W}(a)_{(\delta,\delta),l})$, we have the following bounded operator 
\[
d(\mathcal{F}_{\pi,\Psi})_c+d^{*_{L^2_{(\delta,\delta)}}}_c:\ome^1(W) \otimes \fr{su}(l)_{L^2_{q,(\delta,\delta)}} \ri \ome^0(W) \otimes \fr{su}(l) \oplus \ome^+(W) \otimes \fr{su}(l))_{L^2_{q-1,(\delta,\delta)}}.
\]
The operators $d(\mathcal{F}_{\pi,\Psi})_c+d^{*_{L^2_{(\delta,\delta)}}}_c$ are the Fredholm operators for small $\delta$. Fix such a $\delta$.
We set
\[
\lambda^W (a,l,c):= \Lambda^{\max} \ker (d(\mathcal{F}_{\pi,\Psi})_c) \otimes  \Lambda^{\max} \coker (d(\mathcal{F}_{\pi,\Psi})_c)^*.
\]
{\it The determinant line bundles} are defined by
\[
\lambda^W (a,l):=\displaystyle \bigcup_{c \in \B^W(a)_{(\delta,\delta),l}} \lambda (a,l,c) \ri \B^W(a)_{(\delta,\delta),l}
\]
and
 \[
 \hat{\lambda}^{W} (a,l):=\displaystyle \bigcup_{c \in \hat{\B}^{W}(a)_{(\delta,\delta),l}} \lambda (a,l,c) \ri \hat{\B}^{W}(a)_{(\delta,\delta),l}.
\]
We also define 
\[
\lambda^{Z^+} (l)\ri {\B}^{Z^+}_{\delta,l} \text{ and } \lambda^{\hat{Z}}(a) \ri {\B}^{\hat{Z}}(a) 
\]
in a similar way with respect to the operators
\[
d^+_c+d^{*_{L^2_{\delta}}}_c :\ome^1(Z^+) \otimes \fr{su}(l)_{L^2_{q,\delta}} \ri \ome^0(Z^+) \otimes \fr{su}(l) \oplus \ome^+(Z^+) \otimes \fr{su}(l))_{L^2_{q-1,\delta}} 
\]
for $c \in {\B}^{Z^+}_{\delta,l}$ and
\[
d^+_c+d^*_c :\ome^1(\hat{Z}) \otimes \su_{L^2_{q}} \ri\ome^0(\hat{Z}) \otimes \su \oplus \ome^+(\hat{Z}) \otimes \su)_{L^2_{q-1}}
\]
for $c \in {\B}^{\hat{Z}}(a)$.

\end{defn}
\begin{lem}For a given data $(a,\delta,l)$ in Proposition \ref{simp}, 
the bundles $\lambda^{Z^+} (a,l)\ri {\B}^{Z^+}_{\delta,l}$ and $\lambda^W(a,l)\ri \B^W(a)_{(\delta,\delta),l}$ are trivial.
\end{lem}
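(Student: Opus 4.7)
The plan is to reduce the triviality of each real determinant line bundle to the simple-connectedness of its base configuration space, since a real line bundle over a space with vanishing $H^1(-;\z/2)$ has trivial first Stiefel--Whitney class and is therefore trivial.

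The first step is to exhibit the quotient maps
\[
\hat{\B}^{W}(a)_{(\delta,\delta),l} \ri \B^{W}(a)_{(\delta,\delta),l}, \qquad \hat{\B}^{Z^+}_{\delta,l} \ri \B^{Z^+}_{\delta,l}
\]
as principal bundles with structure groups $\stab(a\oplus\theta)$ and $SU(l)$ respectively. This uses the short exact sequences $\hat{\G}^{W}(a)_l \ri \G^{W}(a)_l \ri \stab(a\oplus\theta)$ and $\hat{\G}^{Z^+}_l \ri \G^{Z^+}_l \ri SU(l)$ introduced earlier, combined with the local-slice / Fredholm theory from \cite{T87} that was already invoked in the proof of Proposition~\ref{simp} to give these spaces their Banach-manifold structure.

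Next, I would observe that for $l\geq 3$ the structure groups are connected: $SU(l)$ is obviously connected, and for the $SU(2)$-flat connection $a$ the stabilizer $\stab(a\oplus\theta)\subset SU(l)$ is either $SU(l)$ itself (if $a$ is the trivial connection) or $S(U(1)\times U(l-2))$ (if $a$ is irreducible), both of which are connected. Feeding this into the long exact sequence of homotopy groups of the fibration
\[
\stab(a\oplus\theta) \ri \hat{\B}^{W}(a)_{(\delta,\delta),l} \ri \B^{W}(a)_{(\delta,\delta),l},
\]
namely $\pi_1(\hat{\B}^{W}(a)_{(\delta,\delta),l}) \ri \pi_1(\B^{W}(a)_{(\delta,\delta),l}) \ri \pi_0(\stab(a\oplus\theta))$, together with $\pi_1(\hat{\B}^{W}(a)_{(\delta,\delta),l})=0$ from Proposition~\ref{simp} and $\pi_0(\stab(a\oplus\theta))=0$ from connectedness, gives $\pi_1(\B^{W}(a)_{(\delta,\delta),l})=0$. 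The identical argument with $SU(l)$ in place of $\stab(a\oplus\theta)$ yields $\pi_1(\B^{Z^+}_{\delta,l})=0$.

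Finally, since a simply connected space $X$ satisfies $H^1(X;\z/2)=\hom(H_1(X),\z/2)=0$ by the universal coefficient theorem, the first Stiefel--Whitney classes of $\lambda^{W}(a,l)$ and $\lambda^{Z^+}(a,l)$ vanish, and both real line bundles are trivial. The step I expect to be the main obstacle is not the homotopy-theoretic conclusion itself, but rather the analytic verification that the residual $\stab(a\oplus\theta)$-action on $\hat{\B}^{W}(a)_{(\delta,\delta),l}$ really does yield a principal bundle in the Banach category (i.e.\ freeness after modding out the center, existence of a smooth local slice), together with the explicit identification of $\stab(a\oplus\theta)$ in the $SU(l)$-extended setting; both are standard but require careful bookkeeping of centers, stabilizers, and weighted Sobolev decay at the periodic end.
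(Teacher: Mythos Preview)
Your argument is correct and is essentially the paper's own proof: both use the fibration $\stab(a\oplus\theta)\to\hat{\B}^W(a)_{(\delta,\delta),l}\to\B^W(a)_{(\delta,\delta),l}$, the simple-connectedness from Proposition~\ref{simp}, and connectedness of the stabilizer (the paper phrases the last step as ``pullback of $\lambda^W(a,l)$ to $\hat{\B}^W$ is trivial, fiber connected, hence $\lambda^W(a,l)$ trivial'' rather than first deducing $\pi_1(\B^W)=0$, but this is the same reasoning). One harmless slip: for irreducible $a$ the stabilizer of $a\oplus\theta$ in $SU(l)$ is $\{(z,h)\in U(1)\times U(l-2): z^{2}\det h=1\}$, not $S(U(1)\times U(l-2))$, since the scalar $z$ acts on a two-dimensional block---but this group is still connected, so your conclusion is unaffected.
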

\begin{proof}Since the determinant line bundle is a real line bundle, the triviality of $\lambda^{Z^+} (a,l)\ri {\B}^{Z^+}_{\delta,l}$ is a consequence of Proposition $\ref{simp}$. Therefore we show the triviality of  $\lambda^W(a,l)\ri \B^W(a)_{(\delta,\delta),l}$.
We have a fibration 
\begin{align}\label{fib1}
\stab(a\oplus \theta) \ri \hat{\B}^W(a)_{(\delta,\delta),l} \xri{j} \B^W(a)_{(\delta,\delta),l}.
\end{align}
We also have an isomorphism $j^*\lambda^W(a,l) \cong \hat{\lambda}^W(a,l)$ for $j$ in \eqref{fib1}. 
$\hat{\lambda}^W(a,l)$ is the trivial bundle for $l>2$ from Proposition $\ref{simp}$. So if the fiber $\stab(a\oplus \theta)$ of \eqref{fib1} is connected, $\lambda^W(a,l)$ is also trivial. 
The possibilities of $\stab(a\oplus \theta)$ are $SU(l)$, $U(1)\times U(l-1)$, $S(U(2)\times U(l-2))$ and 
\[
\left\{(z,A) \in U(1)\times U(l-2) \middle| z^2 \text{det}A=1\right\}.
\]
Since these groups are connected, $\lambda^W(a,l)$ is the trivial bundle.
\qed
\end{proof}
\begin{lem}\label{222}Suppose that $Y$ satisfies Assumption \ref{imp} and $a$ is an element in $\widetilde{R}^*(Y)$.
Let $i_1:\B^W(a)_{(\delta,\delta),2} \ri \B^W(a)_{(\delta,\delta),3}$ and $i_2:\B^{Z^+}_{\delta,2} \ri \B^{Z^+}_{\delta,3}$ be the maps induced by the product with the product connection. 
 There exists a positive number $\delta'$ such that for a positive real number $\delta$ less than $\delta'$, $i_1^*\lambda^W(a,3) \cong \lambda^W(a,2)$ and $i_2^*\lambda^{Z^+}(a,3) \cong \lambda^{Z^+}(a,2)$ hold.
\end{lem}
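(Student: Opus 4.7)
The plan is to apply Donaldson's stabilization principle: under the embedding $SU(2) \hookrightarrow SU(3)$, $g \mapsto g \oplus 1$, the adjoint representation decomposes as $\fr{su}(3) \cong \su \oplus \R \oplus E$, where $\R$ is spanned by $\text{diag}(i,i,-2i)$ and $E \cong \cb^2$ is the off-block part carrying the standard representation of $SU(2)$. The key feature is that $E$ is complex and the $SU(2)$-action on it is $\cb$-linear.

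First, I would check that for any $A \in \A^W(a)_{(\delta,\delta),2}$, the stabilized connection $A \oplus \theta$ preserves this decomposition, so $\nabla_{A \oplus \theta}$ on $\Omega^*(W) \otimes \fr{su}(3)$ splits as the direct sum of (i) $\nabla_A$ acting on the $\su$ summand via the adjoint representation, (ii) the uncoupled $d$ on the $\R$ summand, and (iii) $\nabla_A$ acting on $E$ via the fundamental representation. Both the cut-off holonomy perturbation $s\pi(A \oplus \theta)$ and Donaldson's perturbation $\sigma_\Psi(A \oplus \theta, \epsilon)$ are built from $\text{ad}$-invariant functions of the holonomy of $A \oplus \theta$; since these holonomies land in the block $SU(2) \subset SU(3)$, the resulting sections of $\text{ad}\,P$ lie entirely in the $\su$ summand. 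Therefore the full Fredholm operator $d(\mathcal{F}_{\pi,\Psi})_c + d^{*_{L^2_{(\delta,\delta)}}}_c$ at $c = A \oplus \theta$ splits as a direct sum $D_{\su}(A) \oplus D_\R \oplus D_E(A)$, where the first factor is precisely the operator defining $\lambda^W(a,2)$, the second is a fixed uncoupled operator $d^* + d^+$ on $\R$-valued forms, and the third is coupled to $A$ via the fundamental representation.

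By the multiplicative property of the determinant line under direct sums of Fredholm operators, this yields a canonical isomorphism of real line bundles over $\B^W(a)_{(\delta,\delta),2}$,
\[
i_1^*\lambda^W(a,3) \cong \lambda^W(a,2) \otimes \lambda_\R \otimes \lambda_E.
\]
The line $\lambda_\R$ is the determinant of a fixed operator independent of $A$, hence canonically trivial. The line $\lambda_E$ arises from the family $\{D_E(A)\}$ of $\cb$-linear Fredholm operators (the complex structure on the fibre $E$ is preserved by the fundamental $SU(2)$-action), so kernels and cokernels are complex vector spaces and $\lambda_E$ admits a canonical complex orientation, making it trivial as a real line bundle. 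This gives $i_1^*\lambda^W(a,3) \cong \lambda^W(a,2)$. The argument for $i_2^*\lambda^{Z^+}(a,3) \cong \lambda^{Z^+}(a,2)$ is identical: connections in the image of $i_2$ are of the form $\theta_2 \oplus \theta_1$ (no perturbation is present on $Z^+$), and the same decomposition of $\fr{su}(3)$ applies globally.

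The main delicate point I expect is verifying that $s\pi$ and $\sigma_\Psi$ preserve the block decomposition at $A \oplus \theta$; once this is established, everything else is a formal consequence of the additivity of the index/determinant under direct sums together with the standard fact that $\cb$-linear Fredholm operators have canonically oriented determinant lines. The ellipticity of each summand, in the weighted norms chosen for $W$, follows from the same double-cover-and-compute argument as in Theorem \ref{fred}, applied representation-by-representation.
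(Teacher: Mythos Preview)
Your argument is essentially the content of Donaldson--Kronheimer's Lemma~(5.4.4), which is exactly what the paper invokes; so the core strategy is the same.

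The difference lies in how the perturbations $\pi$ and $(\Psi,\epsilon)$ are dealt with. The paper does \emph{not} attempt to verify that the linearized perturbation respects the block decomposition $\mathfrak{su}(3)=\su\oplus\R\oplus E$. Instead it first observes that, under Assumption~\ref{imp}, the one-parameter families $\pi_t=(f,th)$ and $(\Psi,t\epsilon)$ for $t\in[0,1]$ give a homotopy of Fredholm families, so the isomorphism class of the determinant line bundle is unchanged when the perturbation is switched off. After this reduction the operators at $A\oplus\theta$ are manifestly block-diagonal, and the citation to \cite{DK90} finishes the proof.

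Your direct approach has a gap precisely at the ``delicate point'' you flag. You argue that because the holonomies of $A\oplus\theta$ lie in the block $SU(2)\subset SU(3)$, the perturbation \emph{value} lands in the $\su$ summand; but you then infer that the \emph{linearization} $d(\mathcal{F}_{\pi,\Psi})_{A\oplus\theta}$ is block-diagonal. That inference is not automatic: varying $A\oplus\theta$ in the $E$-direction moves the holonomy off the $SU(2)$ block, and whether the resulting first-order change stays in $E$ depends on how $\pi$ and $\sigma_\Psi$ are extended to $SU(3)$-connections---something the paper never specifies (indeed $h\in C^l(SU(2)^d,\R)_{\mathrm{ad}}$ and $\chi:SU(2)\to\su$ have no canonical $SU(3)$ extensions). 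The paper's homotopy-to-zero step is precisely what makes this issue disappear, and it is where Assumption~\ref{imp} is actually used in the proof. Adding that reduction at the start of your argument would close the gap; the rest of your write-up then reproduces the cited lemma.
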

\begin{proof}
Under  Assumption \ref{imp} on $Y$, the isomorphism class of these line bundles are independent of the choices of the perturbations $\pi$ and $(\Psi,\epsilon)$ by considering a 1-parameter family of perturbations $\pi_t:=(f,th)$ and $(\Psi,t\epsilon)$ for $t \in [0,1]$.
So Lemma $(5.4.4)$ in \cite{DK90} implies the conclusion.
\end{proof}
\begin{thm}\label{orie}Suppose that $Y$ satisfies Assumption \ref{imp} and $a$ is an element in $\widetilde{R}^*(Y)$. Let $\pi $ be an element in $\prod(Y)^{\text{flat}}$ and $(\Psi, \epsilon)$ be a regular perturbation for $a \in \widetilde{R}^*(Y)$. For sufficiently small $\delta$, $M^W(a)_{\pi,\Psi,\epsilon,\delta}$ is orientable. Furthermore the orientation of $M^W(a)_{\pi,\Psi,\epsilon,\delta}$ is induced by the orientation of $\lambda^W(a,2)$.
\end{thm}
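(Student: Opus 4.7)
The plan is to reduce Theorem \ref{orie} to the triviality of the real determinant line bundle $\lambda^W(a,2) \to \B^W(a)_{(\delta,\delta),2}$; this in turn will follow from the triviality of $\lambda^W(a,3)$ proved above (which used Proposition \ref{simp}) combined with Lemma \ref{222}.

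First I would identify the restriction of $\lambda^W(a,2)$ to $M^W(a)_{\pi,\Psi,\epsilon,\delta}$ with the real line bundle $\det T M^W(a)_{\pi,\Psi,\epsilon,\delta}$. For $[A]$ in the moduli space, the regularity of $(\Psi,\epsilon)$ together with Corollary \ref{trans} ensures that $M^W(a)_{\pi,\Psi,\epsilon,\delta}$ is a manifold of dimension $\ind(a)$ whose tangent space at $[A]$ is $\ker d(\mathcal{F}_{\pi,\Psi})_A / \im d_A$ inside the weighted space $\Omega^1(W)\otimes \su_{L^2_{q,(\delta,\delta)}}$. Because $a\in \widetilde{R}^*(Y)$ is irreducible, a standard unique continuation argument (as used in the proof of Lemma \ref{lem:Sur}, cf.\ Proposition 8.6 (ii) of \cite{SaWe08}) shows that for $\delta$ small enough, $d_A$ is injective on $\Omega^0(W)\otimes\su_{L^2_{q+1,(\delta,\delta)}}$; consequently the weighted formal adjoint $d^{*_{L^2_{(\delta,\delta)}}}_A$ has trivial cokernel. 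Putting these facts together, the augmented Fredholm operator $d(\mathcal{F}_{\pi,\Psi})_A + d^{*_{L^2_{(\delta,\delta)}}}_A$ has cokernel zero and its kernel is canonically isomorphic to $T_{[A]} M^W(a)_{\pi,\Psi,\epsilon,\delta}$ via the weighted $L^2$-orthogonal splitting into gauge-tangent and horizontal directions. Thus $\lambda^W(a,2,[A]) \cong \Lambda^{\max} T_{[A]} M^W(a)_{\pi,\Psi,\epsilon,\delta}$, naturally in $[A]$.

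Next I would establish the triviality of $\lambda^W(a,2)$ as a whole. By Lemma \ref{222}, after shrinking $\delta$ if necessary, the inclusion $i_1:\B^W(a)_{(\delta,\delta),2} \to \B^W(a)_{(\delta,\delta),3}$ satisfies $i_1^*\lambda^W(a,3) \cong \lambda^W(a,2)$. The triviality lemma for $l=3$, whose proof rested on Proposition \ref{simp} and on the connectedness of each admissible $\stab(a\oplus\theta)\subset SU(3)$, gives that $\lambda^W(a,3)$ is trivial. Pullback of a trivial real line bundle is trivial, hence so is $\lambda^W(a,2)$. Choosing a nowhere-vanishing global section of $\lambda^W(a,2)$ and transferring it through the identification of the previous step then produces a continuous nowhere-vanishing section of $\det T M^W(a)_{\pi,\Psi,\epsilon,\delta}$, i.e., an orientation, which by construction coincides with the one induced by the chosen orientation of $\lambda^W(a,2)$.

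The main obstacle is the first step, namely carefully matching the weighted Sobolev framework used to define $\lambda^W(a,2)$ with the one used to cut out the moduli space $M^W(a)_{\pi,\Psi,\epsilon,\delta}$ and to identify its tangent bundle. One has to take $\delta$ below all the simultaneous thresholds: the Fredholm threshold supplied by Theorem \ref{fred} for the operator on the cylindrical-plus-periodic end $W$, the threshold governing the isomorphism in Lemma \ref{222}, and the threshold below which $d_A$ is injective on the weighted $\Omega^0$ so that the weighted Hodge-type decomposition is valid. Once a $\delta$ satisfying all these simultaneous smallness conditions is fixed, the remainder of the argument is a routine adaptation of Donaldson's orientability scheme from the closed case.
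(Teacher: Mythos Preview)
Your overall architecture matches the paper's: first identify $\lambda^W(a,2)|_{M^W(a)_{\pi,\Psi,\epsilon,\delta}}$ with $\Lambda^{\max}TM^W(a)_{\pi,\Psi,\epsilon,\delta}$, then invoke the triviality of $\lambda^W(a,2)$ (via $i_1^*\lambda^W(a,3)\cong\lambda^W(a,2)$ and Proposition~\ref{simp}). The second half is fine and indeed is already set up in the paper before Theorem~\ref{orie}.

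The gap is in the first step, and it is precisely the obstacle you yourself flag but do not resolve. The moduli space $M^W(a)_{\pi,\Psi,\epsilon,\delta}$ is cut out in $\B^W(a)_\delta$, whose weight function $\tau$ vanishes on the cylindrical end $Y\times(-\infty,0]$; the bundle $\lambda^W(a,2)$ lives over $\B^W(a)_{(\delta,\delta),2}$, whose weight function $\tau'$ grows on \emph{both} ends. These are genuinely different Banach manifolds, and an element (or tangent vector) of the first is not a priori an element (or tangent vector) of the second. Simply ``taking $\delta$ below all thresholds'' does not produce an inclusion $M^W(a)_{\pi,\Psi,\epsilon,\delta}\hookrightarrow \B^W(a)_{(\delta,\delta),2}$, nor does it identify the kernels computed in the two frameworks. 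What is needed---and what the paper invokes---is the exponential decay of perturbed-ASD solutions toward the nondegenerate irreducible limit $a$ on the cylindrical end (Proposition~4.3 of \cite{Do02}) to obtain the set-theoretic inclusion, a convergence result (Corollary~5.2 of \cite{Do02}) to match the two subspace topologies, and the exponential decay of solutions to the \emph{linearised} equation (Lemma~3.3 of \cite{Do02}) to show that the kernel of $d(\mathcal{F}_{\pi,\Psi})_A+d^{*_{L^2_{(\delta,\delta)}}}_A$ in $L^2_{q,(\delta,\delta)}$ agrees with the tangent space computed in $L^2_{q,\delta}$. Your unique-continuation remark about injectivity of $d_A$ does not substitute for any of these; it addresses a different point (absence of reducibles) and does not by itself upgrade $L^2_{q,\delta}$ decay to $L^2_{q,(\delta,\delta)}$ decay on the cylindrical end. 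Once you supply these three decay inputs, your argument goes through and coincides with the paper's.
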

\begin{proof}
Using the exponential decay estimate in Proposition 4.3 of \cite{Do02}, we have a inclusion $i:M^W(a)_{\pi,\Psi,\epsilon,\delta} \ri B^W(a)_{(\delta,\delta),2}$ for small $\delta$ as a set. From this inclusion $i$, we regard $M^W(a)_{\pi,\Psi,\epsilon,\delta}$ as a subset in $B^W(a)_{(\delta,\delta),2}$. Applying result of convergence in Corollary $5.2$ of \cite{Do02}, we can show that the topology of  $M^W(a)_{\pi,\Psi,\epsilon,\delta}$ in $B^W(a)_{\delta}$ coincides with the topology of $M^W(a)_{\pi,\Psi,\epsilon,\delta}$ in $B^W(a)_{(\delta,\delta),2}$
Also using exponential decay estimate for solutions to the linearized equation in Lemma $3.3$ of \cite{Do02} , $\lambda^W(a,2)|_{M^W(a)_{\pi,\Psi,\epsilon,\delta}} \ri M^W(a)_{\pi,\Psi,\epsilon,\delta}$ is canonically isomorphic to $\Lambda^{\max} M^W(a)_{\pi,\Psi,\epsilon,\delta}$. \qed
\end{proof}

From Theorem \ref{orie}, an orientation of $M^W(a)_{\pi,\Psi,\epsilon,\delta}$ is characterized by the trivialization of $\lambda^W(a,2)$. On the other hand, to formulate the instanton Floer homology of $Y$ with $\z$ coefficient, Donaldson introduced the line bundle $\lambda(a):=\lambda^{(-\hat{Z})}(a) \otimes \lambda_{(-\hat{Z})}^* \ri \B^{(-\hat{Z})}(a)$, where $\lambda_{(-\hat{Z})}$ is given by
\[
\Lambda^{\max} (H^0_{DR}(-\hat{Z})\oplus H^1_{DR}(-\hat{Z}) \oplus H^+_{DR}(-\hat{Z}))
\]
in Subsection $5.4$ of \cite{Do02}.
The orientation of $\lambda(a)$ is essentially independent of the choice of $Z$.

\begin{defn}\upshape
We set
\[
\lambda_W:= \Lambda^{\max} (H^0_{DR}(W)\oplus H^1_{DR}(W) \oplus H^+_{DR}(W)),
\]
and
$\lambda^W(a):= \lambda^W(a,2) \otimes \lambda_W \ri \B^{W} (a)_{(\delta,\delta),2}$.
\end{defn}
\begin{lem}\label{orien}
Suppose that $Y$ satisfies Assumption \ref{imp}. For an irreducible flat connection $a$, there is a canonical identification between the orientations of $\lambda^W(a )$ and the orientations of $\lambda(a)$.
\end{lem}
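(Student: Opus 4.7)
The plan is to compare the two orientations via a gluing argument involving the auxiliary $4$-manifold $Z^+ = (-Z) \cup_Y W[0,\infty]$, which can equivalently be viewed as $(-\hat Z) \#_T W$, the gluing of $-\hat Z$ and $W$ along a long cylindrical neck $Y \times [-T,T]$. First I would appeal to the standard gluing formula for determinant line bundles in Yang--Mills theory (the periodic-end analogue of Proposition $3.9$ of \cite{Do02}) to obtain a canonical isomorphism
\[
\lambda^{-\hat Z}(a) \otimes \lambda^{W}(a,2) \;\cong\; \gamma_T^{*}\lambda^{Z^+}(2)
\]
of real line bundles over $\mathcal{B}^{-\hat Z}(a) \times \mathcal{B}^{W}(a)_{(\delta,\delta),2}$, where $\gamma_T$ denotes the gluing map into $\mathcal{B}^{Z^+}_{\delta,2}$.

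Next I would exploit the triviality of $\lambda^{Z^+}(2)$ established in the preceding lemmas (via Proposition \ref{simp} and Lemma \ref{222}) to trivialize the left-hand side canonically. Evaluating at the trivial connection $\theta$ on $Z^+$, the fiber is identified with $\Lambda^{\max}(H^0\oplus H^1)(Z^+) \otimes \Lambda^{\max}(H^+(Z^+))^{*}$; using the canonical positive orientation on $(\Lambda^{\max}H^+)^{\otimes 2}$, this gives a canonical orientation of $\lambda_{Z^+}$. A Mayer--Vietoris computation for the decomposition $Z^+ = (-\hat Z) \cup_Y W$, together with the fact that $Y$ is a homology $3$-sphere (so the $H^{*}(Y)$ correction terms contribute only a canonically oriented factor), then yields a canonical isomorphism $\lambda_{Z^+} \cong \lambda_{-\hat Z} \otimes \lambda_W$, thereby canonically orienting the right-hand factor.

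Combining the two canonical trivializations obtained above, and using that orientations of a real line and its dual correspond canonically, the tensor product
\[
\bigl(\lambda^{-\hat Z}(a) \otimes \lambda_{-\hat Z}^{*}\bigr) \otimes \bigl(\lambda^{W}(a,2) \otimes \lambda_W\bigr) \;=\; \lambda(a) \otimes \lambda^{W}(a)
\]
acquires a canonical orientation. This is precisely the desired canonical identification between orientations of $\lambda(a)$ and $\lambda^{W}(a)$.

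The main obstacle will be sign bookkeeping: one must verify that (i) the gluing formula in the periodic-end setting respects the standard orientation conventions, where Assumption \ref{imp} is essential in order to control the asymptotic behaviour in the weighted spaces; (ii) the Mayer--Vietoris identification of $\lambda_{Z^+}$ with $\lambda_{-\hat Z} \otimes \lambda_W$ is orientation-preserving relative to the conventions underlying the definition of $\lambda_W$; and (iii) the resulting identification is genuinely independent of the auxiliary choice of $Z$ and of the neck length $T$. The last point should follow by comparing a second choice $Z'$ via the stretching/homotopy argument of Subsection $5.4$ of \cite{Do02}, reducing the independence to the corresponding independence statement already proved by Donaldson for $\lambda(a)$.
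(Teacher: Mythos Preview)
Your proposal is correct and follows essentially the same route as the paper: glue $-\hat Z$ and $W$ along $Y$ to obtain $Z^+$, use the gluing isomorphism of determinant lines (as in Proposition~3.9 of \cite{Do02}), trivialize $\lambda^{Z^+}(2)$ via Proposition~\ref{simp} and Lemma~\ref{222}, evaluate at $\theta$ to identify the fibre with the cohomological line, and split that line by Mayer--Vietoris into $\lambda_{-\hat Z}\otimes\lambda_W$. The paper's proof is exactly this argument, with the independence of auxiliary choices deduced from the orientability of $\lambda^{Z^+}(2)$ rather than from a separate comparison of two choices of $Z$.
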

\begin{proof}
It suffices to construct an isomorphism $\lambda^W(a)\cong \lambda(a)$ which is canonical up to homotopy.
First we fix two elements $[A] \in \B^{W} (a)_{(\delta,\delta)}$ and $[B] \in \B^{\hat{Z}}(a)$ which have representative $A$ and $B$ satisfying $A_{Y\times (-\infty,-1]}=\pr^*a$ and $B|_{Y\times [1,\infty)}=\pr^*a$.
For such two connections, we obtain an element $A\# B \in \B^{Z^+}_\delta$ by gluing of connections.
This map induces an isomorphism
\[ 
\#:det( d_A^*+d^+_A)\otimes det( d_B^*+d^+_B)\ri det( d_{A\# B}^*+d^+_{A\# B})
\]
from the similar argument of Proposition $3.9$ in \cite{Do02}.
Therefore we have identification
\[
\#:\lambda^W(a)|_{[A]}\otimes \lambda^{Z^+}(a)|_{[B]} \ri \lambda_{Z^+}|_{[A\#B]}.
\]
If we choose a path from $[\theta]$ to $[A\#B]$ in $\B^{Z^+}_\delta$, then we have an identification between $\lambda_{Z^+}|_{[\theta]}$ and $\lambda_{Z^+}|_{[A\#B]}$. The line bundle $\lambda_{Z^+}|_{[\theta]}$ is naturally isomorphic to 
\[
\Lambda^{\max} (H^0_{DR}(Z^+)\oplus H^1_{DR}(Z^+) \oplus H^+_{DR}(Z^+))
\]
 by using Proposition $5.1$ in \cite{T87}. This cohomology group is isomorphic to
\[
\Lambda^{\max} (H^0_{DR}(Z)\oplus H^1_{DR}(Z) \oplus H^+_{DR}(Z))\otimes \Lambda^{\max} (H^0_{DR}(W)\oplus H^1_{DR}(W) \oplus H^+_{DR}(W))
\]
by using the Mayer-Vietoris sequence.

Therefore
\[
\lambda^{W}(a)|_{[A]}\otimes \lambda_W^* \cong (\lambda^{(-Z)}(a)|_{[B]} \otimes \lambda_{(-Z)}^*)^*
\]
holds. Because $\lambda^{Z^+}(a,2)\ri \B^{Z^+}_{\delta,2}$ is orientable by Lemma \ref{222} and Proposition \ref{simp} , the homotopy class of this identification does not depend on choices of the path, $A$, $B$, the bump functions of the gluing map.
We also have the following canonical isomorphism 
\[
(\lambda^{(-Z)}(a)|_{[B]} \otimes \lambda_{(-Z)}^*)^* \cong \lambda^Z(a)|_{[B]} \otimes \lambda_{Z}^*.
\]
by the gluing $Z$ and $-Z$ as above discussion and the Mayer-Vietoris sequence.
This completes the proof.
\qed
\end{proof}
Combining Theorem \ref{orie} and Lemma \ref{orien}, we have:
\begin{thm}\label{ori:conc}Under the assumption of Theorem \ref{orie}, an orientation $\lambda(a)$ and an orientation of $\lambda_W$ give an orientation of $M^W(a)_{\pi,\Psi,\epsilon,\delta}$.
\end{thm}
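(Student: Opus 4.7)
The plan is to combine Theorem \ref{orie} and Lemma \ref{orien} in the only sensible way, since $\lambda^W(a)$ was defined precisely as $\lambda^W(a,2)\otimes \lambda_W$. So the statement is essentially a formal consequence of the two previous results, and the ``proof'' is a bookkeeping exercise about trivializations of real line bundles. No new analytic or topological input should be needed.

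More concretely, I would argue as follows. Given an orientation of $\lambda(a)$, Lemma \ref{orien} produces a canonical (up to homotopy) orientation of $\lambda^W(a)=\lambda^W(a,2)\otimes \lambda_W \to \B^W(a)_{(\delta,\delta),2}$. Using the standard fact that a trivialization of a tensor product $L_1\otimes L_2$ of real lines, together with a trivialization of $L_2$, determines a trivialization of $L_1$, the given orientation of $\lambda_W$ (a fixed finite-dimensional real line) together with the previous orientation of $\lambda^W(a)$ yields an orientation of $\lambda^W(a,2)\to \B^W(a)_{(\delta,\delta),2}$. Then Theorem \ref{orie} asserts that an orientation of $\lambda^W(a,2)$ induces an orientation of the moduli space $M^W(a)_{\pi,\Psi,\epsilon,\delta}$, via the canonical isomorphism $\lambda^W(a,2)|_{M^W(a)_{\pi,\Psi,\epsilon,\delta}}\cong \Lambda^{\max} TM^W(a)_{\pi,\Psi,\epsilon,\delta}$ coming from transversality and the exponential decay estimate. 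This produces the required orientation of $M^W(a)_{\pi,\Psi,\epsilon,\delta}$.

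There is essentially no obstacle here; the only minor point to address is that all identifications involved are canonical up to homotopy, which is exactly what is needed for the induced orientation to be well defined. Under Assumption \ref{imp}, the independence of the isomorphism class of $\lambda^W(a,2)$ from the perturbation data was already established in the proof of Lemma \ref{222}, so no additional consistency check is required. Hence one can write the proof simply as: ``Combine Lemma \ref{orien}, the orientation of $\lambda_W$, and Theorem \ref{orie}.''
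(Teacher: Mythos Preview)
Your proposal is correct and matches the paper's own argument exactly: the paper simply states that Theorem \ref{ori:conc} follows by combining Theorem \ref{orie} and Lemma \ref{orien}, which is precisely the bookkeeping you spell out.
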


\section{Proof of main theorem}\label{promain}
%indが斜体

\begin{proof}
Let $Y$, $l_Y$ and $X$ be as in Section $\ref{main}$. 
Take a Riemannian metric $g_Y$ on $Y$. Fix a non-negative real number $r \in \Lambda_Y$ smaller than $Q^{2l_Y+3}_X$. Suppose that there is an embedding $f$ of $Y$ into $X$ satisfying $f_*[Y]= 1 \in H_3(X,\z)$. Then we obtain the oriented homology cobordism from $Y$ to $-Y$ by cutting open along $Y$. Recall that $W$ is a non-compact oriented Riemann 4-manifold $W$ with both of cylindrical end and periodic end which is formulated at the beginning of Subsection \ref{fred}.

We fix a holonomy perturbation $\pi \in \prod(Y)$ satisfying the following conditions.
\begin{enumerate}
\item $\pi$ is a $\epsilon$-perturbation in Subsection \ref{filter}.
\item $\pi$ is a regular perturbation in the end of Subsection \ref{hol}.
\item $\pi$ is an element of $\prod(Y)^{\text{flat}}$ in Definition $\ref{flatpres}$.  
\item For $a\in \widetilde{R}(Y)$ with $0\leq cs_Y(a) < \min\{1,Q^{2l_Y+3}_X\}$ and $A \in M^W(a)_{\pi,\delta}$,
\[
\frac{1}{8\pi^2}\sup_{n \in \n} ||F(A)+s\pi(A)||^2_{L^2(W)} <\min\{1,Q^{2l_Y+3}_X\}
\]
holds.
\item $d^+_\theta+  sd\pi_\theta:\A^W(\theta)_{(\delta,\delta)}\times \R^d \ri \Omega^+(W)\otimes \su_{L^2_{q-1,(\delta,\delta)}}$ is surjective.
\item For $c \in \widetilde{R}(Y)_\pi$ satisfying $cs_{Y}(c)<0$, $M^W(c)_{\pi,\delta}$ is the empty set.
\end{enumerate}
Assumption \ref{imp} and the proof of Thereom $8.4$ (ii) of \cite{SaWe08} implies the existence of the perturbation ssatisfying the third condition. The first, forth, fifth and sixth conditions follow from choosing small $h\in C^{l'}(SU(2)^{d'},\R)_{ad} $ of $\pi=(f,h)$.  

Next we also fix a holomomy perturbation $(\Psi,\epsilon)$ satisfying the following conditions.
\begin{enumerate}
\item $(\Psi,\epsilon)$ is a regular for $[b] \in \widetilde{R}(Y)$ with $0\leq cs_Y(b) \leq cs_Y(a)$. 
\item For $a\in \widetilde{R}(Y)$ with $0\leq cs_Y(a) < \min\{1,Q^{2l_Y+3}_X\}$,
\[
\frac{1}{8\pi^2}\sup_{n \in \n} ||F(A)+s\pi(A)+\sigma_{\Psi}(A,\epsilon)||^2_{L^2(W)} <\min\{1,Q^{2l_Y+3}_X\}
\]
hold.
\end{enumerate}
To get the first condition, we use Lemma $\ref{lem:Sur}$. The second condition satisfied when we take $\epsilon$ sufficiently small.

 In order to formulate the instanton Floer homology of $Y$ with $\z$ coefficient, we fix an orientation of fix an orientation of $\lambda(a)$ for each $a \in R(Y)$. The orientation of $Y$ induce an orientation of $\lambda_W$. To determine the orientation of $M^W(a)_{\pi,\Psi,\epsilon,\delta}$, we fix a compact oriented manifold $Z$ with $H_1(Z,\z)\cong 0$ as in Subsection $\ref{ori}$.
 The relation between $\lambda_{Z,a}$ and $\lambda(a)$ is given by
 \[
 \lambda^Z(a) \otimes \lambda_{Z} \cong \lambda(a).
 \]

 Let $a$ be a flat connection satisfying $cs_Y(a)<r\leq \min \{Q_X^{2l_Y+3},1\}$ and $\ind(a)=1$. We consider the moduli space $M^W(a)_{\pi,\Psi,\epsilon,\delta}$. From the choice of these perturbation data and Corollary \ref{trans}, $M^W(a)_{\pi,\Psi,\epsilon,\delta}$ has a structure of 1-dimensional manifold for small $\delta$. From Theorem \ref{ori:conc}, we obtain an orientation of  $M^W(a)_{\pi,\Psi,\epsilon,\delta}$ induced by the orientation of $\lambda_{Z,a}$.  
 
 Let $(A,B)$ be a limit point of $M^W(a)_{\pi,\Psi,\epsilon,\delta}$. Using Theorem $\ref{cptness}$ and the standard dimension counting argument, the limit points of $M^W(a)_{\pi,\Psi,\epsilon,\delta}$ correspond to two cases:
\begin{enumerate}
\item $\displaystyle(A,B) \in \bigcup_{b \in \widetilde{R}^*(Y), cs_Y(b)<r,\ind(b)=0} M(a,b)_\pi \times M^W(b)_{\pi,\Psi,\epsilon,\delta}$
\item $(A,B) \in M(a,\theta)_\pi \times M^W(\theta)_{\pi,\Psi,\epsilon,(\delta,\delta)}$.
\end{enumerate}
For the second case, we use the exponential decay estimate to show $B \in M^W(\theta)_{\pi,\Psi,\epsilon,(\delta,\delta)}$.
Here $M(a,b)_\pi$ and $M(a,\theta)_{\pi,\delta}$ have a structure of 1-dimensional manifold. The quotient spaces $M(a,b)_\pi/\R$ and $M(a,\theta)_{\pi,\delta}/\R$ have a structure of compact oriented $0$-dimensional manifold whose orientation induced by the orientation of $\lambda_Z$ and $\R$ action by the translation as in Subsection $5.4$ of \cite{Do02}. Corollary \ref{trans} and Theorem \ref{ori:conc} imply that $M(b)_{\pi,\Psi,\epsilon,\delta}$ has a structure of compact oriented 0-manifold whose orientation induced by the orientation of $\lambda_b$ and $\lambda_W$ for small $\delta$. Since the formal dimension of $M(\theta)_{\pi,\Psi,\epsilon,(\delta,\delta)}$ is $-3$ from Proposition \ref{calfred} and there is no reducible solution except $\theta$ for a regular perutrbation $(\Psi,\epsilon)$, $M(\theta)_{\pi,\Psi,\epsilon,\delta}$ consists of just one point. By the gluing theory as in Theorem $4.17$ and Subsection $4.4.1$ of \cite{Do02}, there is the following diffeomorphism onto its image:
\[
\mathcal{J}:\left(\bigcup_{b \in \widetilde{R}^*(Y), cs_Y(b)<r, \ind(b)=0}(M(a,b)_\pi/\R \times M(b)_{\pi,\Psi,\epsilon,\delta}) \cup M(a,\theta)_\pi/\R \right)\times [T,\infty)
\]
\[
 \ri M^W(a)_{\pi,\Psi,\epsilon,\delta}.
\]
By the definition of the orientation of $M(a,b)_\pi/\R$ and $M(a,\theta)_\pi/\R$, we can construct $\mathcal{J}$ as an orientation preserving map. Furthermore, the complement of $\im \mathcal{J}$ is compact. Therefore we can construct the compactification of $M^W(a)_{\pi,\Psi,\epsilon,\delta}$ by adding the finitely many points 
\begin{align}\label{main'}
\bigcup_{b \in \widetilde{R}^*(Y), cs_Y(b)<r,\ind(b)=0}  (M(a,b)_\pi/\R\times M(b)_{\pi,\Psi,\epsilon,\delta}) \cup  M(a,\theta)_\pi/\R,
\end{align}
which has a structure of compact oriented  $1$-manifold.
By counting of boundary points of the compactification, we obtain the relation 
\[
\delta^{r}( n)(a)+ \theta^{r}(a)=0,
\]
where $n \in CF^0_r(Y)$ is defined by $n(b):= \# M(b)_{\pi,\Psi,\epsilon,\delta}$. This implies $\theta^{r}$ is a coboundary. Therefore we have $0=[\theta^r] \in HF^1_{r}(Y)$ for $0\leq r \leq \min \{Q^{2l_Y+3}_X,1\}$.
\qed
\end{proof}
\bibliographystyle{jplain}
\bibliography{Instantons}

\begin{thebibliography}{10}

\bibitem{BD95}
P.~J. Braam and S.~K. Donaldson.
\newblock Floer's work on instanton homology, knots and surgery.
\newblock In {\em The {F}loer memorial volume}, Vol. 133 of {\em Progr. Math.},
  pp. 195--256. Birkh\"auser, Basel, 1995.

\bibitem{Do87}
S.~K. Donaldson.
\newblock The orientation of {Y}ang-{M}ills moduli spaces and {$4$}-manifold
  topology.
\newblock {\em J. Differential Geom.}, Vol.~26, No.~3, pp. 397--428, 1987.

\bibitem{Do02}
S.~K. Donaldson.
\newblock {\em Floer homology groups in {Y}ang-{M}ills theory}, Vol. 147 of
  {\em Cambridge Tracts in Mathematics}.
\newblock Cambridge University Press, Cambridge, 2002.
\newblock With the assistance of M. Furuta and D. Kotschick.

\bibitem{DK90}
S.~K. Donaldson and P.~B. Kronheimer.
\newblock {\em The geometry of four-manifolds}.
\newblock Oxford Mathematical Monographs. The Clarendon Press, Oxford
  University Press, New York, 1990.
\newblock Oxford Science Publications.

\bibitem{FS90}
Ronald Fintushel and Ronald~J. Stern.
\newblock Instanton homology of {S}eifert fibred homology three spheres.
\newblock {\em Proc. London Math. Soc. (3)}, Vol.~61, No.~1, pp. 109--137,
  1990.

\bibitem{FS92}
Ronald Fintushel and Ronald~J. Stern.
\newblock Integer graded instanton homology groups for homology three-spheres.
\newblock {\em Topology}, Vol.~31, No.~3, pp. 589--604, 1992.

\bibitem{Fl88}
Andreas Floer.
\newblock An instanton-invariant for {$3$}-manifolds.
\newblock {\em Comm. Math. Phys.}, Vol. 118, No.~2, pp. 215--240, 1988.

\bibitem{FU91}
Daniel~S. Freed and Karen~K. Uhlenbeck.
\newblock {\em Instantons and four-manifolds}, Vol.~1 of {\em Mathematical
  Sciences Research Institute Publications}.
\newblock Springer-Verlag, New York, second edition, 1991.

\bibitem{Fr02}
Kim~A. Fr\o~yshov.
\newblock Equivariant aspects of {Y}ang-{M}ills {F}loer theory.
\newblock {\em Topology}, Vol.~41, No.~3, pp. 525--552, 2002.

\bibitem{Fu90}
Mikio Furuta.
\newblock Homology cobordism group of homology {$3$}-spheres.
\newblock {\em Invent. Math.}, Vol. 100, No.~2, pp. 339--355, 1990.

\bibitem{L16}
Jianfeng Lin.
\newblock The seiberg-witten equations on end-periodic manifolds and positive
  scalar curvature metrics, 2016.

\bibitem{SaWe08}
Dietmar Salamon and Katrin Wehrheim.
\newblock Instanton {F}loer homology with {L}agrangian boundary conditions.
\newblock {\em Geom. Topol.}, Vol.~12, No.~2, pp. 747--918, 2008.

\bibitem{T87}
Clifford~Henry Taubes.
\newblock Gauge theory on asymptotically periodic {$4$}-manifolds.
\newblock {\em J. Differential Geom.}, Vol.~25, No.~3, pp. 363--430, 1987.

\bibitem{T90}
Clifford~Henry Taubes.
\newblock Casson's invariant and gauge theory.
\newblock {\em J. Differential Geom.}, Vol.~31, No.~2, pp. 547--599, 1990.

\end{thebibliography}
\end{document}